\newcommand{\E}{\mathbb{E}}
\newcommand{\R}{\mathbb{R}}
\newcommand{\C}{\mathbb{C}}
\newcommand{\ds}{\mathbb{S}}
\newcommand{\eps}{\varepsilon}
\newcommand{\ud}{\text{d}}
\newcommand{\D}{{\rm{D}\hskip-0.7em / \,}}			
\newcommand{\gu}{\hspace{-0.3cm}\phantom{A}^1\tilde{\text{G}}}
\newcommand{\gd}{\hspace{-0.3cm}\phantom{A}^2\tilde{\text{G}}}
\newcommand{\Uu}{\hspace{-0.3cm}\phantom{A}^1\tilde{\text{U}}}
\newcommand{\Ud}{\hspace{-0.3cm}\phantom{A}^2\tilde{\text{U}}}
\newcommand{\fin}{$\bigcirc \hspace{-0.31cm}\divideontimes$}
\newcommand{\Dd}{\mathcal{D}(\ds)}
\newcommand{\proof}{\bfseries \underline{Proof} \mdseries}
\numberwithin{equation}{section}
\newtheorem{theorem}{Theorem}[section]  
\newtheorem{remark}[theorem]{Remark}  
\newtheorem{lemma}[theorem]{Lemma}
\newtheorem{definition}[theorem]{Definition}  
\newtheorem{proposition}[theorem]{Proposition}
\date{}
\begin{document}
\title{Integral Formula for the Characteristic Cauchy Problem on a curved Background}
\author{Jérémie Joudioux\footnote{tel: + 33 98 01 72 85 -- mail: jeremie.joudioux@univ-brest.fr }\\
Laboratoire de Mathématiques de Brest, U.M.R. CNRS 6205\\ 6, avenue Victor Le Gorgeu, CS 93837, F-29238 BREST Cedex 3}
\maketitle

\begin{quote}
Abstract: We give a local integral formula, valid on general curved space-times, for the characteristic Cauchy problem for the Dirac equation with arbitrary spin using the method developed by Friedlander in \cite{Friedlander:1975vn}. The results obtained by Penrose in the flat case in \cite{p80} are recovered directly. It is expected that this method can be used to obtain sharp estimates for the characteristic Cauchy problem for the Dirac equation.\\
Résumé: Nous donnons une formule intégrale pour le problème de Cauchy caractéristique local pour l'équation de Dirac pour le spin arbitraire en utilisant la méthode développée par Friedlander dans \cite{Friedlander:1975vn}. Nous retrouvons alors directement le résultat de Penrose dans le cas plat (\cite{p80}).
\end{quote}

-------------------------------

Penrose obtained in 1963 (\cite{p80}) an integral formula for the characteristic Cauchy problem for the Dirac equations for arbitrary spin in the flat case. His derivation of the integral formula is based on the construction of a Newman-Penrose tetrad (null tetrad) adapted to the null structure of the null initial data hypersurface, and especially to the description of the behavior of the null generators (bicharacteristics) of the cone. The use of the 2-spinor formalism allows him to write the solution of the problem in function of a "null datum", contraction of the data on the cone with its spinor generators. The formula is verified a posteriori through a splitting of the Dirac operator over the spin basis in the compacted spin coefficient formalism. Penrose expected that this formula could be extended to the analytic case. As far as the author knows, the general case remains open.

Friedlander gave in the mid 70's (\cite{Friedlander:1975vn}) a method to obtain a parametrix for the wave equation derived from the Leray constructions (see for instance \cite{gkl64}) and wrote an integral representation of the solution of the characteristic Cauchy problem. His construction is based on a natural decomposition of the fundamental solution on the cone. Another approach exists to the characteristic Cauchy problem based on Fourier Integral operators. It must furthermore be noticed that there is no general result about the characteristic Cauchy problem for hyperbolic operators. Hörmander gave in \cite{MR1073287} a general result of existence and uniqueness, together with energy estimates, for the wave equation on a spatially compact Lorentzian manifold.

The purpose of this paper is to combine the method developed by Friedlander with the description of the null cone by Penrose to obtain an integral formula for the characteristic Cauchy problem with initial data on the cone for arbitrary spin in general curved spacetimes. The choice of this method implies that we face the same restrictions as in the book by Friedlander. There exists an essential obstacle to the extension of the domain of validity of the representation formula: the existence of caustics which limit the domain where the formula can be written. We then have to restrict ourselves to a geodesically convex domain $\Omega$ of a smooth Lorentzian manifold $(M, g)$, that is to say a domain where there exists a unique geodesic between any pair of distinct points. This restriction is inherent to the method and the fact we work with arbitrary curved geometry. The advantage is however that we obtain an explicit integral formula without resorting to any microlocalization. This in principle should allow an extension to metrics of low regularity in the spirit of \cite{MR2339803}.

 More explicitly, let us consider $(M,g)$ a smooth Lorentzian manifold and $p_0$ a point in $\Omega$; the problem:
\begin{equation*}
\D u=0  
\end{equation*}
where $u$ is a section of a given fiber bundle on $\Omega$ and $\D$ is the Dirac operator on this bundle, with the initial conditions on the future null cone $\mathcal{C}^+(p_{0})$:
\begin{equation*}
u=\theta \textrm{ on } \mathcal{C}^+(p_0)\cap \Omega
\end{equation*} 
is known as a first order Goursat problem with initial data on the characteristic hypersurface $\mathcal{C}^+(p_0)\cap\Omega$.

It is known that several conditions must be satisfied to ensure that this problem admits a solution. The first one comes from a geometric obstruction to the existence of a solution when symmetry conditions on the field $u$ are imposed; this implies that the manifold $M$ must satisfy some geometric assumptions, known as the consistency conditions, depending on the spin we are working with. The second one comes from the fact that the initial data are given on a characteristic hypersurface: $\theta$ must then satisfy the restriction of the Dirac equation to the cone from $p_0$:
\begin{equation*}
\D|_{\mathcal{C}^{+}(p_{0})}\theta=0.
\end{equation*}
These equations are called the compatibility equations for the initial data.

As already mentioned, there exists, as far as the author knows, no general result about the characteristic Cauchy problem. Nonetheless, it is worth mentioning some results of existence and uniqueness with some generality. In the analytical case, this problem is similar to the Cauchy-Kowaleski problem (see for instance \cite{gkl64}). The problem is well posed in that case. This can be extended, with energy estimates, to minimal regularity (\cite{arXiv:0903.0515v1}). The well-posedness of the characteristic Cauchy problem is nonetheless not the point of this paper: assuming existence and uniqueness of the solution in the neighborhood of the point $p_0$, the goal consists in deriving a representation formula for this solution.

The paper is organised as follows. The first part presents an adaptation of the Friedlander method to the bundle of Dirac spinors. After a geometric and intrinsic presentation of the theory of spinors, the analytic tools to write a fundamental solution of the Dirac equation are developed. The second part is devoted to the derivation of the formula for Dirac spinors. Following Penrose's construction, a null tetrad adapted to the structure of the null cone is constructed and used to describe the geometric tools. The integral formula can then be derived from the parametrix and the result obtained by Penrose is recovered for Weyl (or two-) spinors. Finally, the third part deals with the arbitrary spin $\frac{n}{2}$. The presentation made in the first part is adapted to the bundle of spinors with spin $\frac{n}{2}$ so that the construction can be applied directly. A representation formula is then given for arbitrary spin and simplified in the case of the Maxwell equations. Penrose's formula for the characteristic Cauchy problem for arbitrary spin in the flat case is recovered in a flat spacetime.

The author would like to thank his supervisor, Jean-Philippe Nicolas, for his kind and patient support, his advices and his culture.

\tableofcontents

\bfseries Notations and conventions. \mdseries

We describe here for future reference the notations and conventions which will be used all along the paper. Note that smooth means $C^\infty$ in this paper.
\begin{enumerate}
\item Geometric notations:
\begin{enumerate}
\item General framework:
\begin{itemize}
\item $(M,g)$: smooth Lorentzian oriented and time oriented manifold with a metric $g$ having signature signature $(+,-,-,-)$;
\item $\Omega$: geodesically convex domain of $M$;
\item $\mu$: volume form associated with the metric $g$ on $M$;
\item $p_0$: a given point in $\Omega$;
\item $\nabla$: Levi-civita connection for $g$ on the tangent bundle of $M$, $TM$.
\end{itemize}
\item Null structure on $\Omega$: let $p$ be a given point in $\Omega$:
\begin{itemize}
\item $\mathcal{C}(p)$: null cone from $p$, that is to say the set of points of $\Omega$ which lie on a null geodesic passing through $p$; 
\item $\mathcal{C}^+(p)$ (resp. $\mathcal{C}^-(p)$): future (resp. past) null cone from $p$, that is to say the set of points of $\Omega$ that lie on a future (resp. past) oriented null geodesic from $p$; 
\item $\mathcal{I}(p)$: chronological set from $p$, that is to say the points of $\Omega$ which lie on a timelike or null geodesic passing through $p$; 
\item $\mathcal{I}^+(p)$ (resp. $\mathcal{I}^-(p)$): future (resp. past) chronological set from $p$, that is to say the set of points of $\Omega$ that lie on a future (resp. past) oriented timelike or null geodesic from $p$;
\item $\mathcal{J}(p)=\mathcal{I}(p)\backslash \mathcal{C}(p)$: causal set from $p$ and $\mathcal{J}^\pm(p)=\mathcal{I}^\pm(p)\backslash \mathcal{C}^\pm(p)$ are the future and past causal sets from $p$.
\end{itemize}
\item Spin structure: $\Omega$ is endowed with a spin structure; the spinors will be denoted using the Penrose conventions as well as the usual algebraic notations according to convenience:
\begin{itemize}
\item $\ds_{Dirac}$: fibre bundle of Dirac (or 4-) spinors; 
\item  $\ds_A$ and $\ds^{A'}$: bundles of Weyl (or 2-) spinors (resp. dual and anti-spinors); 
\item "$\cdot$": Clifford multiplication;
\item $(\cdot,\cdot)$: symplectic product on $\ds_{Dirac}$ obtained by lifting the metric $g$; 
\item $\epsilon^{AB}$ and $\epsilon_{A'B'}$: restrictions of $(\cdot,\cdot)$ to $\ds_A$ and $\ds^{A'}$;
\item $\C^\infty_0(\ds_{Dirac})=\mathcal{D}(\ds_{Dirac})$: smooth sections with compact support in $\Omega$ endowed with the usual Fréchet topology;
\item $\mathcal{D}'(\ds_{Dirac})$: its topological dual; 
\item $\C^\infty(\ds_{Dirac})=\mathcal{E}(\ds_{Dirac})$: smooth sections of $\ds_{Dirac}$ on $\Omega$,;
\item $\mathcal{E}'(\ds_{Dirac})$: its topological dual; 
\item the connection $\nabla$ on $T\Omega$ is lifted on $\ds_{Dirac}$ and is still denoted $\nabla$;
\item the Dirac operator is defined, for a given section $(e_i)_{i\in \{0, \dots, 3\}}$ of the fibre bundle of orthonormal frames, on $C^\infty(\ds_{Dirac})$ by:
$$
\forall \Phi \in C^\infty(\ds_{Dirac}), \D\Phi=\sum_{i\in \{0, \dots, 3\}}e_i\cdot \nabla_{e_i}\Phi 
$$
\end{itemize}
\end{enumerate}
\end{enumerate}

\section{Geometric and analytic preliminaries}

The geometric and analytic tools are presented in this section. As already mentioned in the introduction, due to geometric obstructions such as conjugates points or convergence of geodesics, the whole paper restricts itself to a geodesically convex domain $\Omega$:
\begin{definition}
A domain $\Omega$ is said to be geodesically convex if and only if it is an open set where, for every pair of points $(p,q)$ in $\Omega$, there exists a unique geodesic between $p$ and $q$.
\end{definition}

\subsection{Dirac spinors and Dirac equation}\label{sectionspinor}
\noindent This section presents a construction of the spinor bundle so that it will be possible to apply the method of Friedlander in the most direct way. This presentation also intends to be a small dictionary between an abstract presentation of the theory of spinors and the Penrose conventions to represent spinors in terms of indices. Finally it must be noticed that, though the presentation is made on $\Omega$, it can be generalized to a globally hyperbolic manifold (see remark \ref{geroch} below).

\subsubsection{Abstract construction}
We begin by defining a spin bundle:
\begin{definition} 
A manifold $M$ is said to be spin if its tangent bundle admits a spin structure, that is to say there exists a $\text{Spin(1,3)}$ principal bundle $P_{\ds}$, together with a twofold covering  $\xi: P_\ds\rightarrow P_{SO}M$, where $P_{SO}M$ is the $SO(1,3)$-principle bundle of orthonormal frames on $M$, such that
$$
\forall p\in P_{\ds}, \forall g\in \text{Spin}(1,3), \xi(pg)=\xi(p)\xi_0(g).
$$
where $\xi_0$ is the universal covering from $\text{Spin}(1,3)\approx SL_2(\C)$ on $SO(1,3)$.
\end{definition}

\begin{remark}\label{geroch}
\begin{enumerate}
\item The existence of a spin structure on a manifold is usually ensured by the assumption that its second Stiefel Whitney class vanishes. 
\item In the case of a four dimensional Lorentzian manifold $(M,g)$, Geroch showed in \cite{g68} that a necessary and sufficient condition for $M$ to carry a spin structure is that its bundle of orthonormal frames admits a global section (this is referred to as parallelizability).
\item A common assumption in general relativity which ensures that a 4-dimensional Lorentzian manifold is spin is the global hyperbolicity assumption: there exists in $M$ a global Cauchy hypersurface, i.e. a spacelike  hypersurface such that any inextendible timelike geodesic intersects this surfaces exactly once(\cite{g68,g70}).  
\end{enumerate}
\end{remark}

The spinor bundle on $\Omega$ is defined through the action of an algebra over a vector space. This construction requires the following tool, which consists in group action over a fibre bundle, replacing its previous fibre by a given vector space: 
\begin{definition}
Let $(E,\Omega,\pi)$ be a $G$-principal bundle. Let $F$ be a vector space and\\ $\rho: G\rightarrow Homeo(F)$ a continuous map.\\
Consider the action 
\begin{equation*}
\begin{array}{cccc}
\phi:& G&\rightarrow &Aut(E)\times Homeo(F)\\
&g&\mapsto&\big((x,y)\in E\times F\mapsto (xg^{-1}, \rho(g)y)\big)
\end{array}.
\end{equation*}
The quotient space
$$
E\times F\big/ \rho \text{ or }E\times_{\rho} F
$$
with projection $\tilde\pi$ obtained by factorization of the diagram
\begin{equation*}
\xymatrix{E \times F \ar[rr]^{\pi\circ p} \ar[dr]_{\phi}& & M\\
        &E\times_{\rho} F\ar[ur]_{\tilde\pi}}
\end{equation*}
where $p: E\times F\rightarrow F$ is the projection on the first variable, is a $G$-principal bundle with fibre $F$.
\end{definition}

The algebra, known as the Clifford algebra associated with a given quadratic form, which is used to construct the spinor bundle is then defined:
\begin{definition}
Let $E$ be a vector space (real or complex) with a quadratic form $q$. The Clifford algebra $(Cl(E),+, \cdot)$ is the quotient space: 
\begin{equation*}
Cl(E,q)=\left(\bigoplus_{n=0}^{+\infty}\bigotimes^nE\right)\big/ I(E)
\end{equation*}
where $I(E)$ is the ideal generated by the set $\{v\otimes v-q(v)| v\in E\}$.
\end{definition}
This algebra is known to have the following structure (\cite{Lawson:1989lr}):
\begin{proposition}
There exist two sub-algebrae denoted $Cl^0(E,q)$ and $Cl^1(E,q)$ such that:
$$
Cl(E,q)=Cl^0(E,q)\oplus Cl^1(E,q)
$$
wich satisfy:
\begin{equation}
\begin{array}{ll}
Cl^0(E,q)\cdot Cl^0(E,q)=Cl^0(E,q),& Cl^1(E,q)\cdot Cl^1(E,q)=Cl^0(E,q)\\
Cl^0(E,q)\cdot Cl^1(E,q)=Cl^1(E,q),& Cl^1(E,q)\cdot Cl^0(E,q)=Cl^1(E,q)
\end{array}
\end{equation}
\end{proposition}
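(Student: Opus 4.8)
The plan is to realise the splitting as the $\mathbb{Z}/2\mathbb{Z}$-grading inherited from the tensor algebra. Write $T(E)=\bigoplus_{n\ge 0}\bigotimes^n E$ and set $T^0(E)=\bigoplus_{n\text{ even}}\bigotimes^n E$, $T^1(E)=\bigoplus_{n\text{ odd}}\bigotimes^n E$, so that $T(E)=T^0(E)\oplus T^1(E)$ and $T^i(E)\cdot T^j(E)\subseteq T^{i+j}(E)$ (indices mod $2$). The ideal $I(E)$ generated by the elements $v\otimes v-q(v)$, $v\in E$, fails to be homogeneous for the full $\mathbb{Z}$-grading, but each of these generators lies in $T^0(E)$ (degree $2$ plus degree $0$, both even); this is exactly what makes $I(E)$ compatible with the coarser $\mathbb{Z}/2\mathbb{Z}$-grading, and hence makes that grading descend to the quotient $Cl(E,q)$.

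To make this rigorous and obtain directness of the sum at the same time, I would introduce the \emph{main automorphism}. The map $v\mapsto -v$ on $E$ extends uniquely to an algebra automorphism $\alpha$ of $T(E)$, acting as $(-1)^n$ on $\bigotimes^n E$. Since $\alpha(v\otimes v-q(v))=(-v)\otimes(-v)-q(v)=v\otimes v-q(v)$, the automorphism $\alpha$ maps the generating set of $I(E)$ into $I(E)$, so $\alpha(I(E))=I(E)$ and $\alpha$ descends to an algebra automorphism $\bar\alpha$ of $Cl(E,q)$ with $\bar\alpha^2=\mathrm{id}$. As we work over $\mathbb{R}$ or $\mathbb{C}$, the scalar $2$ is invertible, so I define $Cl^0(E,q)$ and $Cl^1(E,q)$ to be the $(+1)$- and $(-1)$-eigenspaces of $\bar\alpha$; the identity $x=\tfrac12(x+\bar\alpha x)+\tfrac12(x-\bar\alpha x)$ gives $Cl(E,q)=Cl^0(E,q)\oplus Cl^1(E,q)$, the sum being direct because eigenspaces for distinct eigenvalues meet in $0$. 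A short diagram chase then identifies $Cl^i(E,q)$ with the image $\pi(T^i(E))$ of $T^i(E)$ under the quotient map $\pi\colon T(E)\to Cl(E,q)$, and in particular $1\in Cl^0(E,q)$.

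Finally I would read off the four product relations. For $x\in Cl^i(E,q)$ and $y\in Cl^j(E,q)$ one has $\bar\alpha(xy)=\bar\alpha(x)\bar\alpha(y)=(-1)^{i+j}xy$, hence $xy\in Cl^{i+j}(E,q)$: this yields the four inclusions $\subseteq$. For the reverse inclusions, $1\in Cl^0(E,q)$ immediately gives $Cl^0(E,q)\subseteq Cl^0(E,q)\cdot Cl^0(E,q)$, $Cl^1(E,q)\subseteq Cl^0(E,q)\cdot Cl^1(E,q)$ and $Cl^1(E,q)\subseteq Cl^1(E,q)\cdot Cl^0(E,q)$. The one genuinely non-formal point is the equality $Cl^0(E,q)=Cl^1(E,q)\cdot Cl^1(E,q)$: the space $Cl^0(E,q)$ is spanned by $1$ together with the products $w_1\cdots w_{2k}$ ($k\ge 1$) of images of vectors of $E$, and each such product equals $w_1\cdot(w_2\cdots w_{2k})$ with $w_1\in Cl^1(E,q)$ and $w_2\cdots w_{2k}\in Cl^1(E,q)$ (an odd number of factors), while the scalar $1$ is handled by choosing $v\in E$ with $q(v)\neq 0$ — which exists because the relevant quadratic form is the non-degenerate metric $g_p$ — and writing $1=q(v)^{-1}\,v\cdot v$. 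This last step, which distinguishes the equality from the mere inclusion, is the only subtlety; everything else is formal manipulation of the grading.
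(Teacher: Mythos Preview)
Your argument is correct and is precisely the standard construction via the main (parity) automorphism; this is the proof one finds in the reference the paper cites. The paper itself does not give a proof of this proposition: it merely states the result and refers to \cite{Lawson:1989lr}, so there is nothing to compare beyond noting that your write-up matches the textbook approach. Your care with the one non-formal point --- that $1\in Cl^1(E,q)\cdot Cl^1(E,q)$ needs some $v\in E$ with $q(v)\neq 0$ --- is well placed; without it the stated \emph{equality} $Cl^1\cdot Cl^1=Cl^0$ would indeed fail (e.g.\ for $q\equiv 0$, where $Cl(E,0)=\Lambda E$ and the odd part squares to zero).
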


\begin{definition}
The group $\text{Spin}(E,q)$ is the subset of $Cl^0(E,q)$ defined by
$$
\{s\in Cl^0(E,q)|q(s)=1\}
$$
where $q$ is the extension of the quadratic form $q$ to $Cl(E,q)$.
\end{definition}
%

The formalism previously defined can of course be applied to the case of the Minkowski spacetime $(\R^4, \eta)$.
\begin{definition}
The bundle defined by:
$$
\ds_{Dirac}=\left(P_{\ds}\times M_2(\C)\right)/(Spin(1,3))
$$
is called the bundle of four dimensional spinors or bundle of Dirac spinors.
\end{definition}

\begin{remark}
\begin{enumerate}
\item The representation of $Spin(1,3)=SL_2(\C)$ acting on $M_2(\C)$ has two irreducible components, which correspond to $\C^2$ with its two inequivalent complex structures; by convention, we write:
$$
M_2(\C)= (\C^2)^{\ast}\oplus\overline{\C^2}.
$$ 
\item The previous remark gives a decompositon of the fibre bundle $\ds_{Dirac}$ into two bundles (known as bundles of Weyl spinors), corresponding to the splitting of $M_2(\C)$ into $\overline{\C}^2$ and $\left(\C^2\right)^\ast$; this decomposition is written in terms of indices:
$$
\ds_{Dirac}=\ds_A\oplus \ds^{A'}.
$$
A section $u$ of the $\ds_{Dirac}$ bundle will then be split into two smooth sections of the Weyl bundles:
$$
u=\phi_A\oplus\psi^{A'}.
$$
\item To get back to the tangent bundle, a convention must be chosen to represent the Clifford algebra $Cl(\R^4, \eta)$. Its  usual representation is $M_2(\mathbb{H})$, which is split in $M_2(\C)\oplus M_2(\C)$.  The vectors are identified with the hermitian 2-forms or with $\C$-antilinear homomorphisms from $\ds_A$ to $\ds^{A'}$. As such, a vector $u^a$ will be written by convention $u^{AA'}$.
\item For the chosen representation of the Clifford algebra which was made previously, the tangent bundle is identified to the set of  hermitian two forms over $\ds_A$. As such, it endows the tangent bundle with a structure of conformal Lorentzian manifold, i.e. a fibre bundle of cones over $M$ and a time orientation: the fibre bundle of cones over $\Omega$ is made of degenerate hermitian two-forms, the spacelike vectors fields are the hermitian matrices of signature (2,0) or (0,2) and the timelike vectors fields are the hermitian matrices of signature (1,1). The time orientation is obtained by a choice of orientation on the fibre bundle of cones over $\Omega$.

\end{enumerate}
\end{remark}

\begin{proposition}
The bundle $\Lambda^2 \ds^{A'}$ of skew-symmetric two forms is trivial.
\end{proposition}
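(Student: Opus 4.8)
The plan is to exhibit a global non-vanishing section of $\Lambda^2\ds^{A'}$, which for a line bundle is equivalent to triviality. The natural candidate is the two-form $\epsilon_{A'B'}$ introduced in the notation list as the restriction of the symplectic product $(\cdot,\cdot)$ to $\ds^{A'}$. Concretely, $\Lambda^2\ds^{A'}$ has complex rank one (the fibre $\ds^{A'}$ has complex dimension two), so a nowhere-zero section gives a bundle isomorphism with the trivial line bundle $\Omega\times\C$.

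First I would recall that $\Omega$ carries a spin structure, hence a $\mathrm{Spin}(1,3)=SL_2(\C)$ principal bundle $P_\ds$, and that $\ds^{A'}$ is the associated bundle $P_\ds\times_\rho \overline{\C^2}$ for the relevant irreducible representation $\rho$ of $SL_2(\C)$. The key algebraic fact is that the standard area form on $\C^2$, i.e.\ the alternating bilinear form $\omega(u,v)=u^1v^2-u^2v^1$, is $SL_2(\C)$-invariant: this is precisely the statement $\det g=1$ for $g\in SL_2(\C)$. Passing to the conjugate complex structure changes nothing about invariance. Therefore $\omega$ descends to a well-defined global section of $\Lambda^2\ds^{A'}$, because on each overlap the transition functions act by elements of $SL_2(\C)$, which preserve $\omega$. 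This section is nowhere zero since $\omega\neq 0$ pointwise. Equivalently, one may simply observe that $\epsilon_{A'B'}$ is by construction globally defined (it comes from the metric $g$ via the symplectic product on $\ds_{Dirac}$, which is globally defined on all of $\Omega$) and non-degenerate, hence nowhere vanishing as a section of the line bundle $\Lambda^2\ds^{A'}$.

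I would then conclude: the map $\lambda\mapsto \lambda\,\epsilon_{A'B'}$ is a global bundle isomorphism $\Omega\times\C\to\Lambda^2\ds^{A'}$, so the bundle is trivial. One can phrase this even more structurally by noting that the structure group $SL_2(\C)$ of $\ds^{A'}$ already lies in the stabilizer of $\omega$, so the induced structure group on $\Lambda^2\ds^{A'}$ is trivial; a principal bundle with trivial structure group is trivial.

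The only subtle point — and the one I would be most careful about — is checking that the invariance of $\omega$ is genuinely under the full structure group as it acts after the choice of conjugate complex structure, i.e.\ making sure no factor of $|\det|$ or phase sneaks in when one uses $\overline{\C^2}$ rather than $\C^2$. Since conjugation is $\C$-antilinear but the determinant of an $SL_2(\C)$ matrix is $1$ (a real positive number, so fixed by conjugation), this works out; but it is exactly here that a careless argument could fail, so that is where I would spend the verification effort. Everything else is the routine observation that a rank-one bundle with a nowhere-zero section is trivial.
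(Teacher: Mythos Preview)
Your argument is correct and is essentially the same as the paper's: both rest on the single algebraic fact that a skew form on $\C^2$ is invariant under $SL_2(\C)$ (i.e.\ $\det=1$), the paper phrasing this as ``the transition functions of $\Lambda^2\ds^{A'}$ act trivially'' and you as ``the standard area form descends to a global nowhere-zero section.'' One caution: your alternative shortcut via $\epsilon_{A'B'}$ ``coming from the symplectic product on $\ds_{Dirac}$'' is circular in the paper's logical order, since that symplectic product is \emph{constructed} in the remark immediately following this proposition by first choosing a section of the now-trivial $\Lambda^2\ds^{A'}$; stick with your direct $\omega$-descent argument.
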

\begin{proof}
This is a direct consequence of the fact that $\ds^{A'}$ is a $Spin(1,3)=SL_2(\C)$ bundle. Let $(U\times \overline{\C^2},\phi)$ and $(V\times\overline{\C^2}, \psi)$ be two local trivializations of the bundle $\ds ^{A'}$ with empty intersection where $\phi$ and $\psi$ satisfy
$$
p\circ\phi=\pi \text{ and } p\circ\psi=\pi
$$
where $\pi:\ds^{A'}\rightarrow M$ is the projection associated to the bundle $\ds^{A'}$ and $p$ is the projection on the first variable. These two trivializations give rise to two trivializations of $\Lambda^2\ds^A$ that are still denoted by $\phi$ and $\psi$. Let us consider the transition map $\phi\circ \psi^{-1}: \Lambda^{2} \C^2 \times V\rightarrow  \Lambda^{2} \C^2 \times U$. It can be written:
$$
\phi\circ\psi(x, y)=(x, \nu(x)y)
$$
where $\nu: U\cap V\rightarrow SL_2(\C)$ is a smooth map.\\ 
Let $x$ be fixed in $U\cap V$. Since $\nu(x)$ belongs to $SL_2(\C)$ and $y$ is a skew-symmetric 2-form, $y$ is invariant under the action of an element of $SL_2(\C)$, i.e:
$$
\forall (u,v)\in\C^2, y(\nu(x)u,\nu(x)v)= y(u,v).
$$
The fibre bundle $\Lambda^2\ds^{A'}$ is thus trivial.\fin 
\end{proof}

\begin{remark}
\begin{enumerate}
\item The canonical isomorphism, which will be denoted by $\kappa$, between  $\ds^{A'}$ and $\ds_{A}$ induces an other isomorphism between $\Lambda^2 \ds^{A'}$ and $\Lambda^2 \ds_{A}$:
$$
\begin{array}{ccl}
\Lambda^2 \ds^{A'} &\longrightarrow & \Lambda^2 \ds_{A}\\
\epsilon&\longmapsto&\kappa_\ast\epsilon:(u,v)\in\ds_{A}\times \ds_{A} \mapsto\epsilon(\kappa(u), \kappa(v)).
\end{array}
$$
It allows the construction of a symplectic form on $\ds_{\text{Dirac}}$: let $\epsilon$ in $\Lambda^2\ds^{A'}$, we obtain a symplectic form on $\ds_{Dirac}$ by taking:
$$
\epsilon\oplus \kappa^\ast\epsilon.
$$
A two-form $\eps$ on $\ds_A$ is denoted $\eps^{AB}$ and acts on Weyl spinors by:
$$
\forall (u_A,v_B)\in \ds_A, \eps(u,v)=\eps^{AB}u_Av_B.
$$
The corresponding two-form on $\ds^{A'}$ is denoted $\eps_{A'B'}$.

\item Let $\eps$ be a fixed skew-symmetric two-form on $\ds_A$. It is possible to construct a metric $\tilde g$ 
on $T\Omega$ by, for $x^{AA'}$ and $y^{AA'}$ two vectors:
$$
g_{ab}u^av^b=\eps_{AB}\eps_{A'B'}x^{AA'}y^{BB'}
$$
\item We denote by $\eps_{AB}$ a two form which gives rise to the metric $g$ on $M$. The non-degeneracy of $\eps$ induces an indentification between $\ds_A$ and its dual $\ds^A$ given by:
$$
\kappa_A\in \ds_A\longmapsto \kappa^A=\eps^{AB}\kappa_B \in \ds^A
$$
whose inverse mapping is
$$
\kappa^B\in \ds^B\longmapsto \kappa_B=\kappa^A\eps_{AB}.
$$ 
The equivalent transformation can be made for the complex conjugate spinors in $\ds^{A'}$ if we consider the image two-form $\eps_{A'B'}$.
\item The symplectic product on Dirac spinors can thus be written, by lowering and raising indices:
\begin{eqnarray*}
(u,v)&=&\eps^{AB}\psi_{A}\phi_{B}+\eps_{A'B'}\xi^{A'}\zeta^{B'}\\
&=&-\psi^{A}\phi_{A}+\xi_{A'}\zeta^{A'}
\end{eqnarray*}
where $u=\psi_{A}+\xi^{A'}$ and $v=\phi_{A}+\zeta^{A'}$ are two Dirac spinors.
\item The dual $\ds_{\text{Dirac}}^\star$ of $\ds_{\text{Dirac}}$ is split in:
\begin{equation*}
\ds_{\text{Dirac}}^\star=\ds_{A'}\oplus\ds^{A}.
\end{equation*} 
The symplectic form $(\cdot,\cdot)$ realizes an identification between $\ds_{\text{Dirac}}$ and $\ds_{\text{Dirac}}^\star$, whereas its restrictions to, respectively, $\ds_{A}$ and $\ds^{A'}$, denoted $\eps^{AB}$ and $\eps_{A'B'}$, realize an identification between $\ds^A$ and $\ds_{A}$ and between $\ds_{A'}$ and $\ds^{A'}$ respectively.
\end{enumerate}
\end{remark}
\begin{proposition}
Let $\eps$ be a section of $\Lambda^2 \ds^A$. Let $\tilde g$ be the metric associated with $\eps$.
Then, the metric $\tilde g$ is conformal to the metric $g$.
\end{proposition}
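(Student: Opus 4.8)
The plan is to exploit that $\Lambda^2\ds^A$ is a complex line bundle, indeed a trivial one by the preceding proposition, so that once a reference section is fixed every other section is obtained by pointwise multiplication by a smooth complex-valued function. Concretely, let $\eps$ denote the fixed skew-symmetric two-form on $\ds_A$ that gives rise to the metric $g$ through $g_{ab}x^ay^b=\eps_{AB}\eps_{A'B'}x^{AA'}y^{BB'}$, and let $\eps'$ be the section of $\Lambda^2\ds^A$ under consideration. First I would write $\eps'=\lambda\,\eps$ for a uniquely determined smooth function $\lambda:\Omega\to\C$; since $\tilde g$ is assumed to be a (non-degenerate) metric, $\eps'$ must be non-degenerate, hence $\lambda$ is nowhere vanishing on $\Omega$.

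Next I would track how the two-form induced on $\ds^{A'}$ transforms. The image two-form on $\ds^{A'}$ is obtained from $\eps'$ via the canonical isomorphism $\kappa:\ds^{A'}\to\ds_A$, which is $\C$-antilinear. Pulling $\eps'=\lambda\,\eps$ back along $\kappa$ therefore conjugates the scalar factor, so that $\eps'_{A'B'}=\overline{\lambda}\,\eps_{A'B'}$. This is the step I expect to require the most care: it is precisely the antilinearity of $\kappa$ that makes the conformal factor come out as $|\lambda|^2$ rather than $\lambda^2$, and hence guarantees that $\tilde g$ is real, has the same signature as $g$, and is conformal in the genuine sense of a positive conformal factor.

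It then remains to substitute into the defining formula for $\tilde g$:
$$
\tilde g_{ab}x^ay^b=\eps'_{AB}\eps'_{A'B'}x^{AA'}y^{BB'}=\lambda\,\overline{\lambda}\;\eps_{AB}\eps_{A'B'}x^{AA'}y^{BB'}=|\lambda|^2\,g_{ab}x^ay^b,
$$
so that $\tilde g=|\lambda|^2\,g$ with $|\lambda|^2=\lambda\,\overline{\lambda}$ a smooth, strictly positive function on $\Omega$. Since two Lorentzian metrics that differ by multiplication by a positive smooth function are by definition conformal, this proves the proposition. The only genuinely delicate point is the bookkeeping of the conjugation in the second paragraph; everything else reduces to a one-line computation once the triviality of $\Lambda^2\ds^A$ and the antilinearity of $\kappa$ are in hand.
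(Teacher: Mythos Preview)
Your argument is correct, but it takes a different route from the paper's proof. The paper argues via the null-cone characterisation of conformal equivalence: a vector $X^a$ is null for $g$ if and only if it factors as $X^a=\pm u^A\overline{u}^{A'}$, and then the skew-symmetry of \emph{any} section $\eps_{AB}$ of $\Lambda^2\ds^A$ forces $\eps_{AB}u^Au^B=0$, whence $\tilde g(X,X)=0$. Thus the null cones of $g$ and $\tilde g$ coincide, which is equivalent to conformality.

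Your approach is more algebraic and more explicit: you use the triviality of $\Lambda^2\ds^A$ to write $\eps'_{AB}=\lambda\,\eps_{AB}$, track the conjugation through the antilinear isomorphism $\kappa$ to get $\eps'_{A'B'}=\overline{\lambda}\,\eps_{A'B'}$, and read off $\tilde g=|\lambda|^2 g$ directly. This has the advantage of producing the conformal factor explicitly and making its positivity transparent; in fact it immediately yields the content of the remark that follows the proposition in the paper, namely that the map $\eps_{AB}\mapsto \eps_{AB}\eps_{A'B'}$ is a two-sheeted covering of the conformal class of $g$ (the two sheets corresponding to $\pm\lambda$). The paper's proof, by contrast, is coordinate-free and does not rely on having already proved the triviality of $\Lambda^2\ds^A$; it isolates the single algebraic fact (skew-symmetry of $\eps_{AB}$) that drives the result.
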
 
\proof: Let $p$ in $M$. Let $X$ in $T_pM$ and $u,v$ in $\ds_A$. We assume that the vector $X$ is a light-like vector for the metric $g$, i.e.:
$$
g(X,X)=g_{ab}X^aX^b=0.
$$
A necessary and sufficient condition for $\tilde g_{ab}$ to be conformal to $g$ is that $\tilde g$ and $g$ have the same null cone structure, i.e. it is sufficient to show:
$$
\tilde g(X,X)=\tilde g_{ab}X^aX^b=0.
$$
Since $X^a$ is light-like vector, it can be written:
 $$
 X^a=u^A\overline u ^{A'}.
 $$ 
 if it is future directed and 
 $$
  X^a=-u^A\overline u ^{A'}
 $$
 if it is past directed. The calculation is performed for a future directed null vectors, but it is the same for a past directed one. 
Because of the skew-symmetry of $\eps_{AB}$, we have:
 $$
 \eps_{AB}u^Au^B=0
 $$
and then
$$
\tilde g_{ab}X^a X^b=\eps_{AB}u^Au^B\eps_{A'B'}\overline u^{A'}\overline u^{B'}=0.
$$
$X$ is thus still a null vector for $\tilde g$ and $\tilde g$ and $g$ are conformal metrics. \fin

\begin{remark}
The map:
\begin{equation*}
\begin{array}{ccc}
\Lambda^2 \ds^A&\longrightarrow & \{\phi g| \phi\in C^\infty(\Omega,\R^\ast_+)\}\\
\eps_{AB}&\longmapsto& g_{ab}=\eps_{AB}\eps_{A'B'}
\end{array}
\end{equation*}
is a two sheeted covering of the conformal class of $g$. In particular, it is surjective. We denote by  $\eps_{AB}$ a preimage of $g_{ab}$.
\end{remark}

\begin{proposition}
The bundle $\ds_{\text{Dirac}}$ is a Dirac bundle, i.e. a fibre bundle of left modules over $Cl(\Omega,g)$ endowed with a symplectic form $\epsilon$ and a connection $\nabla^{\ds}$ such that:
\begin{enumerate}
\item $\nabla^\ds$ is the pull-back of the Levi-Civita connection on $M$: if $\pi: \ds_{Dirac}\rightarrow \Omega$, then $\nabla$ can be written:
$$
\pi^\ast \nabla= \nabla^\ds
$$ 
\item the connection is compatible with the action of the Clifford algebra: let $X$ be a smooth section of $Cl(T\Omega,g)$ and $u$ a smooth section of $\ds_{Dirac}$, then:
$$
\nabla^\ds(X\cdot u)=\nabla X\cdot u+X\cdot \nabla^\ds u.
$$
Though different since they are acting on different objects, the connexion $\nabla$ on $\Omega$ and $\nabla^\ds$ on $\ds_{Dirac}$ are both denoted by $\nabla$.
\item the action of the Clifford multiplication is an isometry for the symplectic product: let $X$ be a smooth section of $Cl(T\Omega,g)$ and $u, v$ two smooth sections of $\ds_{Dirac}$, then:
$$
\epsilon(X\cdot u, X\cdot v)=q(X)\epsilon(u,v).
$$  
\end{enumerate}
\end{proposition}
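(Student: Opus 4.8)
The plan is to present all the structures at once as data associated to the single principal bundle $P_\ds$, so that the three assertions become, respectively, a tautology, a formal consequence of the $\text{Spin}(1,3)$-equivariance of the Clifford action, and one algebraic identity on the model fibre. Recall that $T\Omega$ and the Clifford bundle $Cl(T\Omega,g)$ are the pull-backs through $\xi$ of the bundles associated to $P_{SO}M$ by the vector and by the Clifford representation of $SO(1,3)$, that $\ds_{Dirac}=(P_\ds\times M_2(\C))/\text{Spin}(1,3)$ by definition, and that the module action $Cl(T\Omega,g)\otimes\ds_{Dirac}\to\ds_{Dirac}$, the symplectic form $\epsilon$ and the quadratic form $q$ are obtained by transporting through an arbitrary local section of $P_\ds$ the corresponding $\text{Spin}(1,3)$-equivariant data on the model fibres, namely $M_2(\mathbb H)=M_2(\C)\oplus M_2(\C)$ acting on $M_2(\C)$ and the invariant form $\epsilon^{AB}\oplus\epsilon_{A'B'}$.

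First I would construct $\nabla^\ds$ and settle (1) and (2) simultaneously. The Levi-Civita connection is a connection $1$-form $\omega$ on $P_{SO}M$ with values in the Lie algebra of $SO(1,3)$; since $\xi_0:\text{Spin}(1,3)\to SO(1,3)$ is a local isomorphism it induces an isomorphism of Lie algebras, and composing $\xi^\ast\omega$ with the inverse of that isomorphism yields a $1$-form on $P_\ds$ which, thanks to the intertwining relation $\xi(pg)=\xi(p)\xi_0(g)$, is readily verified to be a principal connection form (right-equivariance and the correct value on fundamental vector fields). This connection form induces a covariant derivative on every bundle associated to $P_\ds$, in particular $\nabla^\ds$ on $\ds_{Dirac}$ and, on $T\Omega$, the Levi-Civita connection itself, which is exactly assertion (1). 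For (2) I would work in a local frame coming from a section of $P_\ds$: it furnishes at once an orthonormal frame $(e_i)$ of $T\Omega$, a trivialization of $\ds_{Dirac}$, and constant structure constants for the Clifford action, and in such a frame $\nabla X$, $\nabla^\ds u$ and $\nabla^\ds(X\cdot u)$ are all computed from the same connection matrix $a$ with values in the Lie algebra of $\text{Spin}(1,3)$, acting through the Clifford and through the module representation respectively. These two representations intertwine the Clifford product, i.e. $a\cdot(X\cdot u)-X\cdot(a\cdot u)=(aX)\cdot u$ where $aX$ is the infinitesimal $SO(1,3)$-rotation of $X$ attached to $a$; feeding this into the frame computation gives $\nabla^\ds(X\cdot u)=\nabla X\cdot u+X\cdot\nabla^\ds u$. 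The very same argument, using that $\epsilon^{AB}\oplus\epsilon_{A'B'}$ is $\text{Spin}(1,3)$-invariant, yields in addition $\nabla^\ds\epsilon=0$.

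It remains to prove (3), $\epsilon(X\cdot u,X\cdot v)=q(X)\epsilon(u,v)$, which is pointwise and purely algebraic and can therefore be checked in the model fibre. I would first reduce it to the adjointness identity $\epsilon(X\cdot u,v)=\epsilon(u,X\cdot v)$ for vectors $X$: granting this and using the defining Clifford relation $X\cdot X=q(X)$ of $Cl(\Omega,g)$, one gets $\epsilon(X\cdot u,X\cdot v)=\epsilon(u,X\cdot X\cdot v)=q(X)\epsilon(u,v)$. The adjointness identity is linear in $X$, and both of its sides are $\text{Spin}(1,3)$-equivariant (because $\epsilon$ is invariant and Clifford multiplication is equivariant), so it is enough to verify it for $X$ running over the $SO(1,3)$-orbit of one unit timelike vector, a set which spans the model fibre; concretely, writing $u=\psi_A\oplus\xi^{A'}$, $v=\chi_A\oplus\zeta^{A'}$ and $\epsilon(u,v)=-\psi^A\chi_A+\xi_{A'}\zeta^{A'}$ as in the remarks above, the verification collapses onto the standard $2$-spinor contraction identity $X_{AA'}X^{BA'}=\tfrac12\,(X_cX^c)\,\delta_A{}^{B}$ and its primed analogue. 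The only delicate point is not conceptual: one must fix, once and for all, the $2$-spinor (gamma-matrix) normalization of Clifford multiplication — the factors of $\sqrt2$ and the signs — so that it is simultaneously compatible with the displayed form of $\epsilon$ and with $X\cdot X=q(X)$; with those conventions pinned down, (3) and the adjointness identity are a one-line computation.
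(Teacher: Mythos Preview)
The paper does not actually give a proof of this proposition: it is stated and then immediately followed by the discussion of time functions and norms, so there is nothing to compare your argument against. Your approach---lifting the Levi-Civita connection form through the double cover $\xi$ to obtain a principal connection on $P_\ds$, and then reading off (1) and (2) from the general machinery of associated bundles---is the standard one and is perfectly adequate here.

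One concrete point worth flagging, however: you reduce (3) to the \emph{adjointness} identity $\epsilon(X\cdot u,v)=\epsilon(u,X\cdot v)$, whereas the paper itself, a few paragraphs after this proposition, proves the lemma $(V\cdot\Phi,\Psi)=-(\Phi,V\cdot\Psi)$, i.e.\ \emph{skew}-adjointness, via an explicit abstract-index computation with the Infeld--van der Waerden symbols. Combined with the paper's Clifford convention $v\cdot v=q(v)$ (the ideal is generated by $v\otimes v-q(v)$), skew-adjointness gives $\epsilon(X\cdot u,X\cdot v)=-q(X)\,\epsilon(u,v)$, not $+q(X)$ as asserted in the proposition. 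So either the sign in item (3) of the proposition or the sign in the later lemma is off; you have, quite reasonably, chosen the sign that makes the proposition as stated come out true, but you should be aware that this disagrees with the paper's own subsequent computation. Your closing caveat about pinning down the $\sqrt{2}$'s and the signs is therefore exactly on point---it is not merely a cosmetic issue here.
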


In order to define the space on square integrable spinors on $\Omega$, it is necessary to define the norm of a  spinor. This unfortunately cannot be done without choosing a time function $t$ (see \cite{n02}) or, at least, a timelike vector field.

\begin{definition}
A smooth function $t$ on $\Omega$ is called a time function if, and only if its gradient is a non-vanishing future-oriented timelike vector field on $U$.
\end{definition}
\begin{definition}
Let $t$ be a time function on $\Omega$.
Then the map defined by:
$$
\begin{array}{ccc}
C^\infty_0(\Omega,\ds_{Dirac})\times C^\infty_0(\Omega,\ds_{Dirac})&\longrightarrow&\R\\
(\Psi, \Phi)&\longmapsto&\eps(\nabla t\cdot \Psi, \Phi)
\end{array}
$$
is a positive definitive hermitian product over the set of smooth sections of $\ds_{Dirac}$ with compact support in $U$. The norm associated to this scalar product is denoted by $||\star ||_{U}$.
\end{definition}
\begin{remark}
\begin{itemize}
\item This norm will be used in the following on compact subsets of an open set of $\Omega$ to define the Fréchet topology over smooth sections of the fiber bundle of Dirac spinors.
\item A time function $t$ is fixed on $\Omega$. This time function will be used to compute all the norms.
\end{itemize}
\end{remark}

This scalar product is used to define various norms over the spinor fields on $\Omega$: let $\Phi$ in $\mathcal{D}(\ds_{Dirac})$. We define using the positive definite hermitian product:
\begin{itemize}
\item the $L^\infty$-norm over a compact $K$ of $\Omega$: 
\begin{equation*}
||\Phi ||_{\infty, K}=\sqrt{\sup_{K} \left(\eps(\nabla t\cdot \Phi, \Phi)\right)};
\end{equation*}
\item if $\psi: \Omega \rightarrow \R^4$ is a given chart over $\Omega$, the norm over $K$, for any integer $N$:
\begin{equation*}
||\Phi ||_{\infty, N,K}=\sqrt{\sum _{\substack{\alpha\\ |\alpha|\leq N}}\sup_{K} \left(\eps(\nabla t\cdot \nabla^{\alpha}\Phi, \nabla^{\alpha}\Phi)\right)},
\end{equation*}
where $\nabla^\alpha=\nabla_{\partial_{\alpha_{1}}} \nabla_{\partial_{\alpha_{2}}}\dots\nabla_{\partial_{\alpha_{l}}}$ , $\alpha=(\alpha_{1},\dots,\alpha_{l} )$ being a multi-index of length\\
 $|\alpha|=\sum_{i=1\dots n}\alpha_{i}$; a chart of reference $\Psi$ is fixed in the following in the computation of the norms;
\item the $L^2$-norm over $\Omega$:
\begin{equation*}
||\Phi ||_{2}=\sqrt{\int_{\Omega}\eps(\nabla t\cdot \Phi, \Phi)\mu}.
\end{equation*} 
\end{itemize}

\subsubsection{Newman-Penrose tetrad.}

 One way to describe the Lorenztian structure is to use a global section of the fiber bundle of orthonormal frames over $\Omega$ and translate the result in terms of spinors. We construct a global basis, named tetrad of Newman-Penrose which gives rise to a spinor basis of $\ds^A$.

\begin{definition}
A basis of $T\Omega\otimes\C$ $(l,n,m,\overline{m} )$ is called a normalized Newman Penrose basis if $l$ and $n$ are real vectors fields and it satisfies the following relations:
\begin{equation*}
\begin{array}{rcrcrcr}
g(l,l)=0&,& g(n,n)=0&,& g(m,m)=0,&&\\
g(l,n)=1&,&g(m,\overline m)=-1&,&g(l,m)=0&,&g(n,m)=0.\\ 
\end{array}
\end{equation*}
\end{definition}
\begin{remark}

\begin{itemize}
\item The existence of a Newman-Penrose tetrad is insured by the existence of a global section of the fibre bundle of orthonormal frames: if $(e^a_{\mathbf{a}})$ $(\mathbf{a}=0,1,2,3)$ is such a section, the following family of vectors:
\begin{equation}
\begin{array}{cc}
\begin{array}{lcl}
l^{a}&= &\frac{1}{\sqrt2}(e_{0}^a+e_{1}^a)\\
n^{a}&= &\frac{1}{\sqrt2}(e_{0}^a-e_{1}^a)\\
\end{array}&
\begin{array}{lcl}
m^{a}&= &\frac{1}{\sqrt2}(e_{2}^a+ i e_{3}^a)\\
\overline m^{a}&= &\frac{1}{\sqrt2}(e_{2}^a- i e_{3}^a)\\
\end{array}
\end{array}
\end{equation} 
is a normalized Newman-Penrose tetrad. It is obvious that a given normalized Newman-Penrose tetrad gives rise to an orthonormal basis of $T\Omega$ with the following reverse fomulae:
 \begin{displaymath}
 \begin{array}{cc}
\begin{array}{lcl}
e^a_0&= &\frac{1}{\sqrt2}(l^a+n^a)\\
e^a_1&= &\frac{1}{\sqrt2}(l^a-n^a)\\
\end{array}&
\begin{array}{lcl}
e^a_2&= &\frac{1}{\sqrt2}(m^a+\overline{m}^a)\\
e^a_3&= &\frac{1}{i\sqrt2}(m^a-\overline{m}^a)\\
\end{array}
\end{array}
\end{displaymath}
\item Because the structure of null cones will be considered later, we assume that a Newman-Penrose tetrad  $(l,n,m,\overline m)$ is given first and, in a second time, gives rise to an orthonormal  basis $(e^a_{\mathbf{a}})$ $(\mathbf{a}=0,1,2,3)$.
\item Up to an overall sign, there exist two unique spinor fields in $\mathcal{E}(\ds^A)$, denoted by $o^A$ and $\iota^A$ such that:
\begin{equation*}
l^a=o^A\overline o^{A'}, n^a=\iota^A\overline \iota^{A'} \text{ and } m^a=o^A\overline \iota^{A'}.
\end{equation*}
These two spinors are chosen such that the following normalization is satisfied:
\begin{equation*}
\eps_{AB}o^A\iota^B=o_A\iota^A=1
\end{equation*}
\item There exists an alternative notation for this spin basis, which is consistent with the duality property used to describe spinors. We note, in $\ds^A$:
\begin{equation*}
\eps_0^A=o^A \text{ and } \eps_1^A=\iota^A.
\end{equation*}
We also introduce their dual spinors in $\ds_A$ $(\eps^0_A, \eps^1_A)$ which satisfy:
\begin{eqnarray*}
\eps_A^0\eps_0^A=1 &,& \eps_A^1\eps_1^A=1,\\
\eps_A^0\eps_1^A=0 &,& \eps_A^1\eps_0^A=0;
\end{eqnarray*}
they are:
\begin{equation*}
\eps^0_A=-\iota_A \text{ and } \eps^1_A=o_A.
\end{equation*}
\item The vector $e_{\mathbf{a}}^a$ can be written in function of the metric as $g_\mathbf{a}^a$ for $\mathbf{a}=0,\dots,3$. The components of its spinor form  $g_\mathbf{a}^{AA'}$, called the Infeld-van der Waerden, defined as:
\begin{equation*}
g_\mathbf{a}^\mathbf{AA'}=e^a_\mathbf{a}\eps_A^\mathbf{A}\eps_{A'}^\mathbf{A'}.
\end{equation*}
are the coefficients of the decomposition of $e_\textbf{a}^a$ in the basis $(\eps_0^A, \eps^A_1)$: 
\begin{equation*}
e^{a}_{\mathbf{a}}=g_\mathbf{a}^\mathbf{AA'}\eps^A_\mathbf{A}\eps^{A'}_\mathbf{A'}.
\end{equation*}
\end{itemize}
\end{remark}


It is then known (\cite{n02}, section 3.1) that the Clifford multiplication of a Dirac spinor by the basis vectors can be written:
\begin{lemma}
The Clifford multiplication of a Dirac spinor $\phi_A+\psi^{A'}$ by the vector $e^a_\mathbf{a}$ is given by:
\begin{equation*}
e^a_\mathbf{a}\cdot (\phi_A\oplus\psi^{A'})=i\sqrt{2}g^{\mathbf{a}}_{\phantom{a}AA'}\psi^{A'}\oplus-i\sqrt{2}g^{\mathbf{a}AA'}\phi_A
\end{equation*}
\end{lemma}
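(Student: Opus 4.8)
The plan is to work directly in the chosen representation $M_2(\C) = (\C^2)^\ast \oplus \overline{\C^2}$, identifying the Clifford action by the orthonormal vectors $e^a_{\mathbf a}$ with the off-diagonal Dirac matrices in their spinor form. First I would recall that by the construction of $\ds_{Dirac}$ as a Dirac bundle, the Clifford multiplication by a vector $X^a = X^{AA'}$ swaps the two Weyl summands: it sends $\ds_A$ to $\ds^{A'}$ and $\ds^{A'}$ to $\ds_A$, because vectors live in the odd part $Cl^1$ of the Clifford algebra (using the proposition on the $\Z/2$-grading $Cl = Cl^0 \oplus Cl^1$ and the fact that $\ds_A$, $\ds^{A'}$ are the two irreducible $Cl^0$-submodules). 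So $X\cdot(\phi_A \oplus \psi^{A'})$ must have the form $(\alpha\, X_{AA'}\psi^{A'}) \oplus (\beta\, X^{AA'}\phi_A)$ for universal constants $\alpha, \beta$, once one checks the index structure is forced by $SL_2(\C)$-equivariance. The substance of the lemma is pinning down $\alpha = i\sqrt2$ and $\beta = -i\sqrt2$.

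The key step for fixing the constants is the isometry property $\epsilon(X\cdot u, X\cdot v) = q(X)\,\epsilon(u,v)$ from the Dirac bundle axioms, equivalently the Clifford relation $X\cdot X\cdot u = q(X)u = g_{ab}X^aX^b u$. Writing $u = \phi_A \oplus \psi^{A'}$ and applying the ansatz twice gives $X\cdot X\cdot u = (\alpha\beta\, X_{AA'}X^{BA'}\phi_B) \oplus (\alpha\beta\, X^{AA'}X_{BA'}\psi^{B'})$; contracting with $\epsilon_{AB}$, $\epsilon_{A'B'}$ and using $X_{AA'}X^{BA'} = \tfrac12 \delta_A^B\, X_{CC'}X^{CC'}$ together with $X_{CC'}X^{CC'} = \epsilon_{AB}\epsilon_{A'B'}X^{AA'}X^{BB'} = g_{ab}X^aX^b$ (this last identity is exactly the remark relating $\eps_{AB}\eps_{A'B'}$ to $g_{ab}$) forces $\alpha\beta = 2$ — wait, one must track the sign from $g$ having signature $(+,-,-,-)$ and from the convention $g_{ab} = \eps_{AB}\eps_{A'B'}$, which yields $\alpha\beta = -2$, consistent with $\alpha = i\sqrt2$, $\beta = -i\sqrt2$. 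The overall normalization (the $\sqrt2$ rather than, say, $1$) and the relative sign between the two summands then come from matching against the convention $l^a = o^A\overline o^{A'}$, $n^a = \iota^A\overline\iota^{A'}$, $m^a = o^A\overline\iota^{A'}$ together with $e^a_0 = \tfrac1{\sqrt2}(l^a+n^a)$ etc. from the Newman-Penrose remark, and against the symplectic product formula $(u,v) = -\psi^A\phi_A + \xi_{A'}\zeta^{A'}$; I would verify the formula on the basis vector $e^a_0$ (timelike, $q(e_0)=1$) where the computation is cleanest, then invoke $SL_2(\C)$-equivariance to propagate it to all four $e^a_{\mathbf a}$.

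Concretely I would carry out the steps in this order: (1) observe from the grading that Clifford action by a vector interchanges $\ds_A$ and $\ds^{A'}$ and is $Cl^0$-equivariant, hence has the stated index shape with two scalar parameters; (2) impose $X\cdot X\cdot u = q(X)u$ and reduce, via the $2\times2$ spinor identity $X_{AA'}X^{BA'} = \tfrac12\delta_A^B q(X)$, to a single scalar equation $\alpha\beta = -2$; (3) fix the remaining freedom (a rescaling $\alpha \mapsto \lambda\alpha$, $\beta \mapsto \lambda^{-1}\beta$) by demanding compatibility of the real structure — i.e. that $X\cdot(\cdot)$ be a \emph{real} endomorphism of the real bundle $\ds_{Dirac}$, equivalently that complex conjugation intertwine the two off-diagonal blocks — which forces $\beta = \bar\alpha$ up to sign and hence $\alpha \in i\R$ with $|\alpha|^2 = 2$; (4) fix the last sign by evaluating on $e^a_0$ and comparing with the positive-definiteness of $\epsilon(\nabla t\cdot\,\cdot,\,\cdot)$ for a timelike $\nabla t$. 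I expect the main obstacle to be bookkeeping of signs and factors of $i$: the signature convention $(+,-,-,-)$, the conventions $g_{ab} = \eps_{AB}\eps_{A'B'}$ and $(u,v) = -\psi^A\phi_A + \xi_{A'}\zeta^{A'}$, and the Infeld--van der Waerden normalization all interact, and getting the relative sign $+i\sqrt2$ versus $-i\sqrt2$ between the two summands right requires care rather than cleverness — which is presumably why the author simply cites \cite{n02}, section 3.1.
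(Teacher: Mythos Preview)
The paper does not prove this lemma: it simply records it as a known identity and cites \cite{n02}, section~3.1. Your proposal therefore does strictly more than the paper does, and the route you outline---fix the index shape by $SL_2(\C)$-equivariance and the $Cl^0/Cl^1$ grading, then pin down the scalar constants via the Clifford relation $X\cdot X\cdot u=q(X)u$, reality, and the sign of the hermitian form $\epsilon(\nabla t\cdot\,\cdot,\cdot)$---is a correct and standard way to derive such a formula from first principles.

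One concrete slip: your ``wait'' correction is the wrong direction. With the identity $X_{AA'}X^{BA'}=\tfrac12\,\delta_A^{\ B}\,X_{CC'}X^{CC'}$ and the convention $g_{ab}=\eps_{AB}\eps_{A'B'}$, the first component of $X\cdot X\cdot u$ is $\tfrac12\,\alpha\beta\,q(X)\,\phi_A$, so the Clifford relation forces $\alpha\beta=2$, not $-2$. This is exactly what $\alpha=i\sqrt2$, $\beta=-i\sqrt2$ give, so your conclusion is right but the intermediate sentence ``which yields $\alpha\beta=-2$'' should be deleted; your first instinct was correct. The remaining steps (reality forcing $|\alpha|^2=2$ and $\alpha\in i\R$, positivity fixing the last sign) are fine in outline, and you are right that the work is bookkeeping rather than ideas.
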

\begin{remark}\label{cliffcontr}: The Clifford multiplication can be interpreted as a contraction with the corresponding vector of the basis (up to a factor $\pm i\sqrt{2}$) by writing:
\begin{eqnarray*}
e^a_\mathbf{a}\cdot (\phi_A+\psi^{A'})&=&i\sqrt{2}g^{\mathbf{ab}}g_{\mathbf{b} AA'}\psi^{A'}-i\sqrt{2}g^{\mathbf{ab}}g_{\mathbf{b}}^{AA'}\phi_{A}\\
&=&i\sqrt{2}g(e_{\mathbf{a}}, e_{\mathbf{a}})g_{\mathbf{a}AA'}\psi^{A'}-i\sqrt{2}g(e_{\mathbf{a}}, e_{\mathbf{a}})g_{\mathbf{a}}^{AA'}\phi_{A}\\
\end{eqnarray*}
As a consequence, the Clifford multiplication by the vector $l^{AA'}$ is the contraction with $n^{AA'}$ and conversely the Clifford multiplication by $n^{AA'}$ is the contraction by $l^{AA'}$ (up to a factor $\pm i\sqrt{2}$):
\begin{equation}
\begin{array}{lcl}
l\cdot (\phi_A+\psi^{A'})&=&i\sqrt{2}(n_{AA'}\psi^{A'}-n^{AA'}\phi_A)\\
n\cdot (\phi_A+\psi^{A'})&=&i\sqrt{2}(l_{AA'}\psi^{A'}-l^{AA'}\phi_A)
\end{array}
\end{equation}
\end{remark}

We conclude this section by giving the abstract index expression of the Dirac operator on 4-spinors (\cite{n02}, section 3.1):
\begin{lemma}
The Dirac operator is decomposed as follows:
\begin{equation*}
\D (\phi_A+\psi^{A'})=i\sqrt{2}(\nabla_{AA'}\psi^{A'}-\nabla^{AA'}\phi_A)
\end{equation*}
\end{lemma}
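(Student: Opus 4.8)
The plan is to unwind the definition $\D\Phi=\sum_{\mathbf{a}=0}^{3}e_{\mathbf{a}}\cdot\nabla_{e_{\mathbf{a}}}\Phi$ and feed into it the two facts just established: the form of Clifford multiplication by a frame vector (the Clifford multiplication Lemma above) and the compatibility of the spinor connection with the Clifford action. The first observation is that the splitting $\ds_{Dirac}=\ds_A\oplus\ds^{A'}$ is parallel for $\nabla$: the connection is induced from the principal $\mathrm{Spin}(1,3)=SL_2(\C)$ bundle and $SL_2(\C)$ acts on $M_2(\C)=(\C^2)^{\ast}\oplus\overline{\C^2}$ block-diagonally, so it preserves each Weyl summand. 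Hence for $\Phi=\phi_A\oplus\psi^{A'}$ one has $\nabla_{e_{\mathbf{a}}}\Phi=(\nabla_{e_{\mathbf{a}}}\phi_A)\oplus(\nabla_{e_{\mathbf{a}}}\psi^{A'})$, and a directional derivative along a frame vector is expressed through the Infeld-van der Waerden symbols by $\nabla_{e_{\mathbf{a}}}=e^{a}_{\mathbf{a}}\nabla_a=g^{BB'}_{\mathbf{a}}\nabla_{BB'}$.

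Next I would substitute the Clifford multiplication Lemma, applied with $\phi_A,\psi^{A'}$ replaced by $\nabla_{e_{\mathbf{a}}}\phi_A,\nabla_{e_{\mathbf{a}}}\psi^{A'}$, into the defining sum. Collecting the two components, the $\ds_A$-part of $\D\Phi$ becomes $i\sqrt{2}\sum_{\mathbf{a}}g^{\mathbf{a}}_{\phantom{a}AA'}g^{BB'}_{\mathbf{a}}\nabla_{BB'}\psi^{A'}$ and the $\ds^{A'}$-part the analogous contraction built from $g^{\mathbf{a}AA'}$ and $\nabla_{BB'}\phi_A$. The frame sum is then collapsed by the tetrad completeness relation: in an orthonormal frame $g^{ab}=\sum_{\mathbf{a}}g(e_{\mathbf{a}},e_{\mathbf{a}})\,e^{a}_{\mathbf{a}}e^{b}_{\mathbf{a}}$, which in spinor form reads $\eps^{AB}\eps^{A'B'}=\sum_{\mathbf{a}}g(e_{\mathbf{a}},e_{\mathbf{a}})\,g^{AA'}_{\mathbf{a}}g^{BB'}_{\mathbf{a}}$, using the earlier identity $g_{ab}=\eps_{AB}\eps_{A'B'}$. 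Together with $g^{\mathbf{a}}_{\phantom{a}AA'}=g(e_{\mathbf{a}},e_{\mathbf{a}})\,g_{\mathbf{a}AA'}$ and $g_{\mathbf{a}AA'}=\eps_{AC}\eps_{A'C'}g^{CC'}_{\mathbf{a}}$, the sum reduces to $\delta_{A}{}^{B}\delta_{A'}{}^{B'}$, up to the sign fixed by the convention $\eps_{AC}\eps^{BC}=\delta_{A}{}^{B}$. One is then left with $\D(\phi_A\oplus\psi^{A'})=i\sqrt{2}\big(\nabla_{AA'}\psi^{A'}\oplus-\nabla^{AA'}\phi_A\big)$, which is the assertion once the direct sum is written additively. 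It is also worth noting that $\D$, though defined via a choice of orthonormal section, is unchanged under an $SO(1,3)$-valued change of frame, so the operator --- and the right-hand side, which no longer mentions a frame --- is well defined.

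The argument is essentially bookkeeping, and the only real subtlety lies there: the Clifford multiplication Lemma produces the symbol $g^{\mathbf{a}}_{\phantom{a}AA'}$ with the frame index \emph{raised} while the chain rule produces $g^{BB'}_{\mathbf{a}}$ with it \emph{lowered}, so their contraction silently carries the Minkowski metric $\eta^{\mathbf{ab}}=\mathrm{diag}(1,-1,-1,-1)$, and raising a primed index behaves differently from raising an unprimed one. Keeping track of these signs together with the factors $g(e_{\mathbf{a}},e_{\mathbf{a}})=\pm1$ is precisely what makes the overall factor $i\sqrt{2}$ and the relative minus sign between the $\psi^{A'}$ and $\phi_A$ terms come out correctly; fixing the normalizations $o_A\iota^A=1$ and the $\tfrac{1}{\sqrt{2}}$ in the definition of the null tetrad once and for all is what guarantees consistency. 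No caustics or convexity considerations enter: this is a pointwise algebraic identity at each point of $\Omega$.
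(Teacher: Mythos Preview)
Your argument is correct and is the natural one: plug the Clifford-multiplication lemma into the defining sum $\D\Phi=\sum_{\mathbf{a}} e_{\mathbf{a}}\cdot\nabla_{e_{\mathbf{a}}}\Phi$, use that $\nabla$ preserves the Weyl splitting, and collapse the frame contraction via the completeness relation for the Infeld--van der Waerden symbols. The paper itself does not prove this lemma at all; it simply states it with a citation to \cite{n02}, section~3.1, so there is nothing to compare against beyond noting that you have supplied the routine computation the paper chose to outsource. Your closing remarks about the sign bookkeeping (the silent $\eta^{\mathbf{ab}}$ in matching $g^{\mathbf{a}}{}_{AA'}$ against $g_{\mathbf{a}}^{BB'}$, and the $\eps$-raising conventions) are exactly the points where one can go wrong, and you have identified them correctly.
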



%


%

%

\subsection{Analytic requirements}\label{analyticrequirements}

\subsubsection{Distributions on spinors} 

The purpose is to write weak solutions for the Dirac equation. The theory of distributions must thus be adapted to ensure  properties of symmetry for the Dirac operator and the Clifford multiplication so that the construction of Friedlander can be used with few adaptations.

\paragraph{Fundamental properties}

The basic elements needed in the next section are sketched here. 
 Spinor-valued distributions are defined in \cite{di82} to construct fundamental solutions for the Dirac equation. They were also developed in \cite{u97} to construct a Fourier integral operator for the propagator of the Dirac equation.

\begin{definition}
A distribution $u$ on the set $\mathcal{D}(\ds_{Dirac})$ of smooth Dirac spinor fields with compact support on $\Omega$, endowed with its usual Fréchet topology, is a $\C$-linear continuous mapping from $\mathcal{D}(\ds_{Dirac})$ to $\C$, i.e. a mapping which satisfies for all compact $K$ in $\Omega$, there exists a positive constant $C$ and an integer $m$ depending only on $K$ such that:
\begin{equation*}
\forall \phi \in \Dd, |u(\phi)|\leq C ||\phi ||_{\infty, m, K}
\end{equation*}
\end{definition}
The set of distributions on $M$ will be denoted by $\mathcal{D}'(\ds_{Dirac})$ and the duality bracket  by $<,>$.

\begin{definition}
The support of a distribution $u$ is the complement of the largest open subset $O$ of $\Omega$ such that any smooth function $\phi$ with support in $O$ satisfies:
\begin{equation*}
<u,\phi>=0.
\end{equation*}
\end{definition}
The set of compactly supported distributions is denoted $\mathcal{E}'(\ds_{Dirac})$ and is the topological dual of $\mathcal{E}(\ds_{Dirac})$, set of smooth sections of $\ds_{Dirac}$ on $\Omega$.

If $u$ is a locally integrable section of $\ds^{\ast}_{Dirac}=\ds^{A}\oplus \ds_{A'}$, which can be written $u=\xi^A+\eta_{A'}$, it defines a distribution by:
\begin{equation*}
\forall \Phi \in \mathcal{D}(\ds_{Dirac}), <u,\Phi>=\int_\Omega -\xi^A\phi_A + \eta_{A'}\psi^{A'}\mu. 
\end{equation*}
where the smooth section $\Phi$ is split as: $\Phi=\phi_A+\psi^{A'}$.

We define now the action of the covariant derivative in a direction $V$ and of the Dirac operator on distributions by:
\begin{proposition}\label{distdirac}
Let $u$ be an element of $\mathcal{D}'(\ds_{Dirac})$ and $V$ be a smooth section nowhere vanishing of $T\Omega$. The distributions $\nabla_V u$ and $\D u$ are defined by:
\begin{equation*}
\begin{array}{lccl}
\forall \phi \in\mathcal{D}(\ds_{Dirac}),& <\nabla_V u,\phi>_{\mathcal{D}'(\ds^\star_{\text{Dirac}}),\mathcal{D}(\ds_{\text{Dirac}})}&=&-<u,\nabla_V\phi>_{\mathcal{D}'(\ds^\star_{\text{Dirac}}),\mathcal{D}(\ds_{\text{Dirac}})}\\
\forall \phi \in \mathcal{D}(\ds_{Dirac}), & <\D u,\phi>_{\mathcal{D}'(\ds^\star_{\text{Dirac}}),\mathcal{D}(\ds_{\text{Dirac}})}&=&-<u,\D\phi>_{\mathcal{D}'(\ds^\star_{\text{Dirac}}),\mathcal{D}(\ds_{\text{Dirac}})}\\
\end{array}
\end{equation*}
\end{proposition}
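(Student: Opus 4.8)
This statement is a definition by transposition, so proving it amounts to two things: checking that the two right-hand sides really do define elements of $\mathcal{D}'(\ds^\star_{Dirac})$, and checking (implicitly, since the notation $\nabla_V u$ and $\D u$ presupposes it) that on the distributions attached to locally integrable sections these formulas reproduce the classical operators. The plan is to treat $\nabla_V$ first and then obtain $\D$, via the lemmas of Section~\ref{sectionspinor}, as a finite $\C$-linear combination of terms of the form $e_i\cdot\nabla_{e_i}$. The preliminary remark is that if $\phi\in\mathcal{D}(\ds_{Dirac})$ then $\nabla_V\phi$ and $\D\phi$ are again smooth sections with support contained in $\mathrm{supp}\,\phi$, so the brackets on the right make sense; what has to be established is their continuity for the Fr\'echet topology.

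For this I would estimate one seminorm at a time on a fixed compact $K$ and in the fixed reference chart. Writing $\nabla_V\phi=V^k\nabla_{\partial_k}\phi$ with $V^k$ the chart components of $V$, and expanding $\nabla^\alpha(\nabla_V\phi)$ by repeated use of the Leibniz rule, one obtains a finite sum of terms $(\partial^\gamma V^k)\nabla^\beta\phi$ with $|\beta|\le|\alpha|+1$; the coefficients are smooth, hence bounded on $K$, and the pointwise norm $\sqrt{\eps(\nabla t\cdot\,\cdot\,,\,\cdot\,)}$ obeys the triangle inequality, so $\|\nabla_V\phi\|_{\infty,m,K}\le C_K\|\phi\|_{\infty,m+1,K}$. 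The same works for $\D=\sum_i e_i\cdot\nabla_{e_i}$, the only extra input being that Clifford multiplication by each frame vector $e_i$ is a smooth bundle automorphism --- indeed $e_i\cdot e_i=q(e_i)=g(e_i,e_i)\neq0$ --- hence is, together with its inverse, bounded on $K$ for the chosen pointwise norm; so $\|\D\phi\|_{\infty,m,K}\le C_K\|\phi\|_{\infty,m+1,K}$. Composing with the defining estimate for $u$ gives $|\langle u,\nabla_V\phi\rangle|\le C\|\phi\|_{\infty,m+1,K}$ and likewise for $\D\phi$, so $\phi\mapsto-\langle u,\nabla_V\phi\rangle$ and $\phi\mapsto-\langle u,\D\phi\rangle$ are continuous linear forms, i.e. distributions, of order at most one more than that of $u$ on each compact set.

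It remains to check compatibility with the embedding. If $u=\xi^A+\eta_{A'}$ is locally integrable, I would use the abstract-index form $\D(\phi_A+\psi^{A'})=i\sqrt2(\nabla_{AA'}\psi^{A'}-\nabla^{BA'}\phi_B)$ together with the fact that the spin connection preserves $\eps_{AB}$ and $\eps_{A'B'}$: after one integration by parts each term of $\langle u,\D\phi\rangle$ becomes $\pm\langle\D u,\phi\rangle$ plus the divergence $\nabla_a W^a$ of an explicit spinor-built vector field $W^a$ (for instance $\xi^A\psi^{A'}$), and these divergences integrate to zero by Stokes' theorem because $\phi$ has compact support. For $\nabla_V$ the same computation applies, the difference being that integrating the total divergence leaves an extra $-\int_\Omega\langle u,\phi\rangle\,(\mathrm{div}\,V)\,\mu$; so the distributional $\nabla_V$ agrees with the classical covariant derivative precisely when $V$ is divergence free, which is why the $\nabla_V$ formula is genuinely a definition. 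The only delicate point here is the sign bookkeeping in this integration by parts --- matching the factor $i\sqrt2$, the raising and lowering of spinor indices, and the minus built into the pairing $\langle u,\phi\rangle=\int_\Omega(-\xi^A\phi_A+\eta_{A'}\psi^{A'})\mu$, so as to land on exactly the $-\langle u,\D\phi\rangle$ of the statement and not its opposite; the continuity estimates and the vanishing of the boundary terms are routine.
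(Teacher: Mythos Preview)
Your proposal is correct and considerably more thorough than what the paper actually provides. In the paper this proposition is treated as a pure definition: no proof is given, only the one-line remark that ``these definitions agree with the Leibniz rule and the fact that the connection is compatible with the symplectic product on spinors.'' The continuity estimate you give (raising the order by one via the Leibniz rule in the reference chart) is exactly what is needed to justify that the right-hand sides are distributions, and the paper simply omits it.

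Your observation about the $\nabla_V$ case is worth flagging: the formula $\langle\nabla_V u,\phi\rangle=-\langle u,\nabla_V\phi\rangle$ does \emph{not} reproduce the classical covariant derivative on smooth sections unless $\mathrm{div}\,V=0$, because the integration by parts leaves the term $-\int_\Omega\langle u,\phi\rangle\,\mathrm{div}(V)\,\mu$. You handle this correctly by treating the formula as a genuine definition rather than a theorem. The paper glosses over this point entirely. For the Dirac operator your integration-by-parts argument is essentially what the paper carries out later, in Lemma~\ref{lemprau}, for the symplectic pairing $(\cdot,\cdot)$ rather than the dual pairing $\langle\cdot,\cdot\rangle$; there the divergence term appears explicitly and vanishes after integration, and the sign comes out as $+$ rather than $-$ because of the antisymmetry of the symplectic form (compare Definition~\ref{clifforddiracdist}). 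So your caution about sign bookkeeping is well placed: the two pairings used in the paper give opposite signs for $\D$.
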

These definitions agree with the Leibniz rule and the fact that the connexion is compatible with the symplectic product on spinors.

We also need to define the Clifford multiplication with a vector:
\begin{proposition}\label{distcliff}
Let $u$ be an element of $\mathcal{D}'(\ds_{Dirac})$ and $V$ a smooth section of $T\Omega$. We define the distribution $V\cdot u$ in  $\mathcal{D}'(\ds_{Dirac})$  by:
\begin{equation*}
\forall \phi \in \mathcal{D}(\ds_{Dirac}), <V\cdot u, \phi>_{\mathcal{D}'(\ds^\star_{\text{Dirac}}),\mathcal{D}(\ds_{\text{Dirac}})}=<u, V\cdot \phi>_{\mathcal{D}'(\ds^\star_{\text{Dirac}}),\mathcal{D}(\ds_{\text{Dirac}})}.
\end{equation*} 
\end{proposition}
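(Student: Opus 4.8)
The statement has two contents: that $\phi\mapsto\langle u,V\cdot\phi\rangle$ really is an element of $\mathcal{D}'(\ds_{Dirac})$, and that this prescription extends the pointwise Clifford multiplication on locally integrable sections. The plan is to dispose of them in that order. First I would check that $\phi\mapsto V\cdot\phi$ preserves $\mathcal{D}(\ds_{Dirac})$: since $V$ is a smooth section of $T\Omega$ and $\ds_{Dirac}$ is a bundle of left $Cl(\Omega,g)$-modules, Clifford multiplication by $V$ is a smooth bundle endomorphism, so $V\cdot\phi$ is again smooth, and $\mathrm{supp}(V\cdot\phi)\subset\mathrm{supp}(\phi)$ keeps the support compact.

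Next comes the only genuinely technical point, the continuity. I would establish, for every compact $K\subset\Omega$ and every $m\in\N$, a bound
\begin{equation*}
||V\cdot\phi||_{\infty,m,K}\ \leq\ C_{V,K,m}\,||\phi||_{\infty,m,K}.
\end{equation*}
This rests on three ingredients already in place in Section~\ref{sectionspinor}: (i) the fibrewise operator norm of ``Clifford multiplication by $V$'' is bounded on $K$ --- which follows from the isometry property $\epsilon(V\cdot w,V\cdot w')=q(V)\epsilon(w,w')$, from the fact that $q(V)=g(V,V)$ is bounded on the compact $K$, and from the positive-definiteness of $w\mapsto\epsilon(\nabla t\cdot w,w)$ used to define the norms; (ii) the Leibniz-type compatibility $\nabla(V\cdot\phi)=\nabla V\cdot\phi+V\cdot\nabla\phi$, which lets one express the covariant derivatives of $V\cdot\phi$ of order $\leq m$ through the covariant derivatives of $V$ of order $\leq m$ (bounded on $K$ by smoothness of $V$) Clifford-multiplied with the covariant derivatives of $\phi$ of order $\leq m$; (iii) the boundedness on $K$ of the transition coefficients between $\nabla^\alpha$ in the fixed reference chart and iterated covariant derivatives. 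Combined with the defining estimate $|u(\psi)|\leq C\,||\psi||_{\infty,m,K}$ for $u\in\mathcal{D}'(\ds_{Dirac})$, this gives $|\langle V\cdot u,\phi\rangle|=|u(V\cdot\phi)|\leq C\,C_{V,K,m}\,||\phi||_{\infty,m,K}$, so $V\cdot u\in\mathcal{D}'(\ds_{Dirac})$.

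Finally I would verify the consistency with the classical product. If $u=\xi^A+\eta_{A'}$ is locally integrable, the duality pairing $\langle u,\Phi\rangle=\int_\Omega(-\xi^A\phi_A+\eta_{A'}\psi^{A'})\mu$ is precisely $\int_\Omega\epsilon(u,\Phi)\mu$ once $u$ is viewed in $\ds_{Dirac}$ via the symplectic identification, so the claim reduces to the pointwise self-adjointness of Clifford multiplication by a vector,
\begin{equation*}
\epsilon(V\cdot u,\Phi)=\epsilon(u,V\cdot\Phi).
\end{equation*}
This I would derive algebraically: from the isometry property applied to $w=V\cdot u$ together with the Clifford relation $V\cdot V=q(V)\,\mathrm{Id}$ one gets $\epsilon(w,V\cdot\Phi)=\epsilon(V\cdot w,\Phi)$ whenever $q(V)\neq0$ (so that $V\cdot$ is invertible, hence $w$ arbitrary), and the general case follows by continuity in $V$, the identity being polynomial in the components of $V$; alternatively one checks it directly from the explicit splitting of the Clifford action over the spin frame in Remark~\ref{cliffcontr} and the index form $\epsilon(\phi_A+\psi^{A'},\phi'_A+\psi'^{A'})=-\phi^A\phi'_A+\psi_{A'}\psi'^{A'}$. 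Note that, in contrast with the covariant derivative in Proposition~\ref{distdirac}, no integration by parts occurs, which is exactly why no sign appears in the defining formula. The main obstacle is thus the seminorm estimate of the second step --- that Clifford multiplication by a fixed smooth vector field is continuous for the Fr\'echet topology of $\mathcal{D}(\ds_{Dirac})$; the rest is bookkeeping with the structural identities of the Dirac bundle.
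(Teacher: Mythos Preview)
Your proof does more than the paper's: the paper addresses only the consistency step, and does so by a two-line index computation for the $\langle\,,\,\rangle$ pairing between $\ds_{Dirac}^\star$ and $\ds_{Dirac}$. Writing $u=\phi^{A'}+\chi_{A}\in\ds_{Dirac}$ and $v=\rho_{A'}+\theta^{A}\in\ds_{Dirac}^\star$, the paper simply expands $\langle v,e_{\textbf{a}}\cdot u\rangle$ in Infeld--van der Waerden symbols and observes the symmetry in $A,A'$. Your continuity argument is correct and arguably a necessary supplement, since the paper takes it for granted.

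Where you diverge is in the consistency check. Your primary route is abstract: from the stated isometry property $\epsilon(X\cdot u,X\cdot v)=q(X)\epsilon(u,v)$ together with $V\cdot V=q(V)$ you deduce $\epsilon(V\cdot u,\Phi)=\epsilon(u,V\cdot\Phi)$. The algebra is correct, but beware: the lemma immediately following this proposition proves by explicit index calculation that $(V\cdot\Phi,\Psi)=-(\Phi,V\cdot\Psi)$ for the symplectic product --- the opposite sign. These three inputs are not mutually consistent, so one of the paper's stated conventions carries a sign slip; leaning on the abstract isometry identity here is therefore unsafe. More to the point, the proposition concerns the $\langle\,,\,\rangle$ pairing, not $(\,,\,)$: the Clifford actions on $\ds_{Dirac}$ and on $\ds_{Dirac}^\star$ are given by ``the same formula'' in components, but they do \emph{not} commute with the $\epsilon$-identification between the two bundles (the sign discrepancy between this proposition and the subsequent lemma is exactly this phenomenon). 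So your reduction of $\langle V\cdot u,\Phi\rangle=\langle u,V\cdot\Phi\rangle$ to a symplectic-product identity silently assumes a compatibility that fails. Your alternative --- the direct index check via Remark~\ref{cliffcontr} --- is precisely the paper's argument and is the robust route; make it the primary one and drop the isometry derivation.
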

\proof

The representation of the Clifford multiplication is the same for the dual $\ds_{Dirac}^\ast$.
Consequently, if $u=\phi^{A'}+\chi_{A}$ is in $\ds_{Dirac}$ and $v=\rho_{A'}+\theta^{A}$ is in $\ds_{Dirac}^\ast$, then:
\begin{equation*}
<v,e_\textbf{a}\cdot u>_{\ds^\star_{Dirac},\ds_{Dirac}}=-i\sqrt{2}g^{\textbf{a}}_{\phantom{a}AA'}\chi^{A'}\theta^{A}-i\sqrt{2}g^{\textbf{a}AA'}\phi_A\rho_{A'}. 
\end{equation*}
We notice that this expression is symmetric in $A$ and $A'$ so that we can conclude:
\begin{equation*}
<e_\textbf{a}\cdot v, u>_{\ds^\star_{Dirac},\ds_{Dirac}}= <v,e_\textbf{a} \cdot u>_{\ds^\star_{Dirac},\ds_{Dirac}}.\text{\fin}
\end{equation*}

\begin{remark} When a distribution on $\ds_{Dirac}$ is represented by a function from $\Omega$ into $\ds_{Dirac}$, the symplectic product $(\cdot,\cdot)$ on  $\ds_{Dirac}$ is used to apply the distribution on a section of $\ds_{Dirac}$. The duality bracket will be in that case written $(\cdot,\cdot)_{\mathcal{D}'(\ds_{Dirac}),\mathcal{D}(\ds_{Dirac})}$.
\end{remark}
 The previous results need to be checked since the definitions given in (\ref{distdirac})  do not work when the symplectic product (or the $\varepsilon$ spinor) is used. We first need the following lemmata on the action of Clifford multiplication and the Dirac operator:
\begin{lemma}
For any $\Phi$ and $\Psi$ Dirac spinor fields on $\Omega$ and $V$ a vector field on $\Omega$, we have:
\begin{equation*}
(V\cdot \Phi,\Psi)=-(\Phi,V\cdot\Psi)
\end{equation*}
\end{lemma}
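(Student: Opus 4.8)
The plan is to reduce the identity $(V\cdot\Phi,\Psi)=-(\Phi,V\cdot\Psi)$ to the corresponding statement for the basis vectors $e_{\mathbf a}^a$, using bilinearity of both the Clifford multiplication and the symplectic product $(\cdot,\cdot)$ over smooth functions, and then to check the basis case by a direct index computation. First I would write $V=\sum_{\mathbf a}V^{\mathbf a}e_{\mathbf a}$ in the fixed orthonormal frame; since $(\cdot,\cdot)$ is $C^\infty(\Omega)$-bilinear and $V\cdot\Phi=\sum_{\mathbf a}V^{\mathbf a}\,e_{\mathbf a}\cdot\Phi$, it suffices to prove $(e_{\mathbf a}\cdot\Phi,\Psi)=-(\Phi,e_{\mathbf a}\cdot\Psi)$ for each $\mathbf a$.

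Next I would compute both sides explicitly using the decompositions already recorded in the excerpt. Write $\Phi=\phi_A+\psi^{A'}$ and $\Psi=\chi_A+\zeta^{A'}$. By the Clifford-multiplication lemma, $e^a_{\mathbf a}\cdot\Phi=i\sqrt2\,g^{\mathbf a}_{\ AA'}\psi^{A'}\oplus-i\sqrt2\,g^{\mathbf aAA'}\phi_A$, i.e. the spinor part of $e_{\mathbf a}\cdot\Phi$ is $-i\sqrt2\,g^{\mathbf aAA'}\phi_A$ and the anti-spinor part is $i\sqrt2\,g^{\mathbf a}_{\ AA'}\psi^{A'}$. Plugging this into the formula for the symplectic product $(u,v)=-\rho^A\sigma_A+\xi_{A'}\eta^{A'}$ (from the earlier remark, where $u=\rho_A+\xi^{A'}$, $v=\sigma_A+\eta^{A'}$), I would get $(e_{\mathbf a}\cdot\Phi,\Psi)$ as a sum of two terms, each a multiple of $i\sqrt2\,g^{\mathbf a}$ contracted with one Weyl component of $\Phi$ and one of $\Psi$; the same computation gives $(\Phi,e_{\mathbf a}\cdot\Psi)$. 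The point to extract is that both expressions are, up to sign, the single scalar $i\sqrt2\big(g^{\mathbf a}_{\ AA'}\psi^{A'}\chi^A - g^{\mathbf aAA'}\phi_A\zeta_{A'}\big)$ or its mirror, and that raising/lowering indices with $\eps_{AB}$, $\eps_{A'B'}$ introduces exactly one extra sign when one swaps which factor carries the primed versus unprimed free index — precisely the sign recorded in the symmetry check at the end of Proposition \ref{distcliff}. Collecting the signs shows $(e_{\mathbf a}\cdot\Phi,\Psi)=-(\Phi,e_{\mathbf a}\cdot\Psi)$.

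Alternatively, and perhaps more cleanly, I would deduce the identity from the property already stated that the Clifford action is an isometry for the symplectic product: $\epsilon(X\cdot u,X\cdot v)=q(X)\epsilon(u,v)$. Polarizing this in $X$ along $X=e_{\mathbf a}+e_{\mathbf b}$ and using $q(e_{\mathbf a}+e_{\mathbf b})=q(e_{\mathbf a})+q(e_{\mathbf b})$ for $\mathbf a\neq\mathbf b$ (orthonormality, so the cross term in $q$ vanishes) yields $\epsilon(e_{\mathbf a}\cdot u,e_{\mathbf b}\cdot v)+\epsilon(e_{\mathbf b}\cdot u,e_{\mathbf a}\cdot v)=0$; combining with the relations $e_{\mathbf a}\cdot e_{\mathbf a}\cdot u = q(e_{\mathbf a})u$ and rearranging gives $\epsilon(e_{\mathbf a}\cdot u,v)=-\epsilon(u,e_{\mathbf a}\cdot v)$ after a short manipulation, and then bilinearity extends it to an arbitrary $V$. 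I would likely present the direct index computation as the main argument and mention the isometry argument as a remark.

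The routine part is the index bookkeeping; the main obstacle — really the only place care is needed — is tracking the signs coming from the conventions, namely the $-\psi^A\phi_A+\xi_{A'}\zeta^{A'}$ form of the symplectic product, the sign in $\eps^0_A=-\iota_A$, and the fact that $\eps_{AB}u^Au^B=0$ forces the "diagonal" contributions to cancel while the cross terms survive with opposite signs. I expect the verification to go through once these are handled consistently with the conventions fixed in Section \ref{sectionspinor}, so that the antisymmetry of $(\cdot,\cdot)$ under moving a Clifford factor across is exactly the reflection of the skew-symmetry of $\eps$.
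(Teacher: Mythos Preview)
Your main argument --- reduce to a basis vector by $C^\infty$-bilinearity and then carry out the index computation using the explicit Clifford-multiplication formula and the $\eps$-raising/lowering conventions --- is exactly the paper's proof. The paper writes $\Phi=\xi_A+\chi^{A'}$, $\Psi=\rho_A+\theta^{A'}$, expands $(e_{\mathbf a}\cdot\Phi,\Psi)$, and then uses the single identity $\varepsilon^{AB}g^{\mathbf a}{}_{AA'}=\varepsilon_{A'B'}g^{\mathbf a\,BB'}$ to swap which index carries the $\eps$; that is precisely the ``one extra sign from raising/lowering'' you describe.

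Your alternative route, however, does not work as written. From the isometry property $\epsilon(X\cdot u,X\cdot v)=q(X)\epsilon(u,v)$ and the Clifford relation $e_{\mathbf a}\cdot e_{\mathbf a}=q(e_{\mathbf a})$, substitute $v\mapsto e_{\mathbf a}\cdot w$ in the diagonal case: one gets
\[
q(e_{\mathbf a})\,\epsilon(e_{\mathbf a}\cdot u,w)=\epsilon(e_{\mathbf a}\cdot u,\,e_{\mathbf a}\cdot e_{\mathbf a}\cdot w)=q(e_{\mathbf a})\,\epsilon(u,e_{\mathbf a}\cdot w),
\]
which yields $\epsilon(e_{\mathbf a}\cdot u,w)=+\,\epsilon(u,e_{\mathbf a}\cdot w)$, the wrong sign. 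The polarization identity you derive for $\mathbf a\neq\mathbf b$ is correct but does not help recover the diagonal case with a minus sign either. So the ``isometry'' statement in the form quoted is not the right input for this lemma (indeed, taken literally it is in tension with the lemma); stick to the direct index computation, which is both correct and what the paper does.
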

\proof: It is sufficient to verify the result for an element $e_\textbf{a}$ of the frame. We calculate $(e_{\textbf{a}}\cdot \Phi,\Psi)$ in components. 
\begin{equation*}
(e_{\textbf{a}}\cdot \Phi,\Psi)=-i\sqrt{2}\varepsilon_{A'B'}g^{\textbf{a}AA'}\xi_{A}\theta^{B'}+i\sqrt{2}\varepsilon^{AB}g^{\textbf{a}}_{\phantom{a}AA'}\chi^{A'}\rho_{B}
\end{equation*}
with $\Phi=\xi_A+\chi^{A'}$ and $\Psi=\rho_{A}+\theta^{A'}$. Noticing that:
\begin{equation*}
\varepsilon^{AB}g^{\textbf{a}}_{\phantom{a}AA'}=-g^{\textbf{a}B}_{\phantom{BB}A'}=-\varepsilon_{B'A'}g^{\textbf{a}BB'}=\varepsilon_{A'B'}g^{\textbf{a}BB'}
\end{equation*}
we obtain:
\begin{equation*}
(e_{\textbf{a}}\cdot \Phi,\Psi)=-i\sqrt{2}\varepsilon^{AB}g^{\textbf{a}}_{\phantom{a}BB'}\xi_{A}\theta^{B'}+i\sqrt{2}\varepsilon_{A'B'}g^{\textbf{a}BB'}\chi^{A'}\rho_{B}=-(\Phi,e_{\textbf{a}}\cdot\Psi).\text{\fin}
\end{equation*}

In order to verify the symmetry of the Dirac operator for the symplectic product, we will establish the following lemma: 
\begin{lemma}\label{lemprau}
Let $\Phi$ and $\Psi$ two spinor fields on $\Omega$. Then we have:
\begin{equation*}
(\D\Phi,\Psi)=(\Phi,\D\Psi)-\textrm{div}(V).
\end{equation*} 
where $V$ is a complex vector field.
\end{lemma}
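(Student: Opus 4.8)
The plan is to prove the identity by a direct computation in the abstract-index formalism and to read off $V$ at the end. Write $\Phi=\phi_A+\psi^{A'}$ and $\Psi=\rho_A+\chi^{A'}$. First I would insert the component form of the Dirac operator from the last lemma of Section~\ref{sectionspinor}, $\D\Phi=i\sqrt2\,(\nabla_{AA'}\psi^{A'}-\nabla^{AA'}\phi_A)$, together with the abstract-index expression of the symplectic product recalled above, $(u,v)=-\psi^A\phi_A+\xi_{A'}\zeta^{A'}$, so that both $(\D\Phi,\Psi)$ and $(\Phi,\D\Psi)$ become finite sums of scalar contractions of $\nabla$-derivatives of the components of one spinor against the components of the other. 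Using the skew-symmetry $\eps_{AB}=-\eps_{BA}$ and the associated index conventions to move indices around, one brings $(\D\Phi,\Psi)$ to the symmetric shape $i\sqrt2\,[\rho^A\nabla_{AA'}\psi^{A'}+\chi^{A'}\nabla_{AA'}\phi^A]$ and, by the same manipulation with $\Phi$ and $\Psi$ interchanged and a sign from the antisymmetry of the pairing, $(\Phi,\D\Psi)$ to $-i\sqrt2\,[\phi^A\nabla_{AA'}\chi^{A'}+\psi^{A'}\nabla_{AA'}\rho^A]$.

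The key point is then the Leibniz rule for the spin connection together with $\nabla\eps=0$ — which holds because $\nabla$ is the lift of the Levi-Civita connection and $\eps$ generates $g$ — so that raising and lowering of spinor indices commutes with $\nabla$ and, crucially, a pair such as $\rho^A\nabla_{AA'}\psi^{A'}+\psi^{A'}\nabla_{AA'}\rho^A$ is exactly $\nabla_{AA'}(\rho^A\psi^{A'})$. Since $\rho^A\psi^{A'}$ and $\phi^A\chi^{A'}$ are the spinor forms of complex vector fields on $\Omega$, and $\nabla_{AA'}$ contracted on a vector index is the space-time divergence, subtracting the two expansions makes all the unbalanced terms recombine and yields $(\D\Phi,\Psi)-(\Phi,\D\Psi)=\nabla_{AA'}V^{AA'}=\textrm{div}(V)$ for an explicit complex vector field $V$, bilinear in the components of $\Phi$ and $\Psi$ (one finds $V^{AA'}$ proportional to $\rho^A\psi^{A'}+\phi^A\chi^{A'}$), the overall sign being fixed by the stated conventions so that the identity reads with $-\textrm{div}(V)$. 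This pointwise identity is all that is required here; it is this formula, $\textrm{div}$ integrating to a boundary term, that will later make the distributional definitions of $\nabla_V$ and $\D$ compatible with the symplectic pairing.

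The only genuine difficulty is the bookkeeping: keeping the signs consistent under the skew pairing and under the conventions $\kappa^A=\eps^{AB}\kappa_B$ but $\kappa_B=\kappa^A\eps_{AB}$, which is the one place an error would slip in. An alternative would be to avoid indices, writing $\D=\sum_i e_i\cdot\nabla_{e_i}$ and invoking the preceding lemma $(V\cdot\Phi,\Psi)=-(\Phi,V\cdot\Psi)$ together with the compatibility of $\nabla$ with both the Clifford multiplication and $\eps$; but that route forces one to carry the terms coming from $\sum_i g(e_i,e_i)\,\nabla_{e_i}e_i$, which obscures the identification of $V$, so I would stay with the index computation.
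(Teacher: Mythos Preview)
Your proposal is correct and is essentially the abstract-index computation that the paper gives as its \emph{second} derivation of the identity. The paper in fact presents both routes: it first proves the lemma via the frame approach you dismiss, and only afterwards redoes it in indices exactly along the lines you sketch, arriving at the divergence term $i\nabla_{AA'}(\rho^{A'}\psi^A)+i\nabla^{AA'}(\phi_A\chi_{A'})$.

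Your reason for rejecting the frame route---the stray $\nabla_{e_i}e_i$ terms---is the one point worth correcting: the paper handles this by the standard device of working pointwise in an orthonormal frame $(f_i)$ chosen so that $\nabla_{f_i}f_i=0$ at the given point. With that choice the computation is clean, and it buys a more intrinsic description of $V$: it is the metric dual of the complex $1$-form $h\mapsto (\Psi, h\cdot\Phi)$, i.e.\ $V=\sum_i \mathfrak f_i\,(\Phi,f_i\cdot\Psi)\,f_i$. This form of $V$ is the one actually reused later (e.g.\ in the proof of Proposition~\ref{derivation}), so while your index computation is perfectly valid and matches the paper, the frame version is not merely an alternative but the one that feeds into the subsequent arguments.
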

\proof: The formula is proved at each point of $\Omega$; let then $p$ be a point in $\Omega$. Let $(f_{i})$ be a orthonormal basis on $\Omega$ such that, for all $i$ in $\{0,1,2,3\}$:
\begin{equation*}
\nabla_{f_{i}}f_{i}=0 \text{ at } p. 
\end{equation*}

For this choice of basis, we have, at the point $p$:
\begin{eqnarray*}
(\D\Phi,\Psi)&=&\sum_{i\in \{0,1,2,3\}}(f_{i}\cdot \nabla_{f_i}\Phi,\Psi)\\
&=&-\sum_{i\in \{0,1,2,3\}}(\nabla_{f_i}\Phi,f_{i}\cdot \Psi)\\
&=&-\sum_{i\in \{0,1,2,3\}}\big\{\nabla_{f_i}(\Phi,f_{i}\cdot \Psi)-(\Phi,\nabla_{f_i}(f_{i}\cdot \Psi))\big\} \text{ $(,)$ being compatible with the connection.}\\
&=&-\sum_{i\in \{0,1,2,3\}}\big\{\nabla_{f_i}(\Phi,f_{i}\cdot \Psi)-(\Phi,f_{i}\cdot \nabla_{f_i}\Psi)\big\} \text{ since at $p$ } \nabla_{f_{i}}f_{i}=0\\
&=&(\Phi, \D\Psi)-\sum_{i\in \{0,1,2,3\}}\nabla_{f_i}(\Phi,f_{i}\cdot \Psi).
\end{eqnarray*}
Introducing the complex vector field $v$ defined, at $p$, by:
\begin{equation*}
V=\sum_{i=0}^{3}\mathfrak{f}_{i}(\Phi,f_{i} \cdot \Psi)f_i
\end{equation*}
with $\mathfrak{f}_i=<f_i,f_i>$, we notice that
$$
\sum_{i\in \{0,1,2,3\}}\nabla_{f_i}(\Phi,f_{i}\cdot \Psi)
$$
is the divergence of $V$.\\
We present an alternative way to perform this calculation with abstract indices; the Dirac spinors $\Phi$ and $\Psi$ are split on $\ds_{Dirac}$:
\begin{eqnarray*}
\Phi&=&\phi_{A}\oplus\rho^{A'}\\
\Psi&=&\psi_A\oplus\chi^{A'}.
\end{eqnarray*}
We now lead the calculation in the usual way:
\begin{eqnarray*}
\frac{1}{\sqrt{2}}(\D\Phi,\Psi)&=&\varepsilon^{AB}(i\nabla_{AA'}\rho^{A'})\psi_B+\varepsilon_{A'B'}(-i)(\nabla^{AA'}\phi_{A})\chi^{B'}\\
&=&\varepsilon^{AB}i\nabla_{AA'}(\rho^{A'}\psi_B)+\varepsilon_{A'B'}(-i\nabla^{AA'}(\phi_{A}\chi^{B'}))\\
&&-\varepsilon^{AB}\rho^{A'}i\nabla_{AA'}\psi_B-\varepsilon_{A'B'}\phi_{A}(-i\nabla^{AA'}\chi^{B'})\\
&=&i\nabla_{AA'}(\varepsilon^{AB}\rho^{A'}\psi_B)+(-i)\nabla^{AA'}(\varepsilon_{A'B'}\phi_{A}\chi^{B'})\\
&&+\rho^{A'}i\nabla_{A'}^B\psi_B-\phi_{A}(-i)\nabla^{A}_{B'}(\chi^{B'})\\ 
&=&i\nabla_{AA'}(\rho^{A'}\psi^A)+i\nabla^{AA'}(\phi_{A}\chi_{A'})\\
&&+\rho^{A'}i\nabla^{BB'}\varepsilon_{B'A'}\psi_B+i\phi_{A}\varepsilon^{AB}\nabla_{BB'}\chi^{B'}\\
&=&i\nabla_{AA'}(\rho^{A'}\psi^A)+i\nabla^{AA'}(\phi_{A}\chi_{A'})\\
&&-\varepsilon_{A'B'}\rho^{A'}i\nabla^{BB'}\psi_B+i\varepsilon^{AB}\phi_{A}\nabla_{BB'}\chi^{B'}\\
&=&i\nabla_{AA'}(\rho^{A'}\psi^A)+i\nabla^{AA'}(\phi_{A}\chi_{A'})+\frac{1}{\sqrt{2}}(\Phi,\D\Psi).
\end{eqnarray*}
It must be noticed that, in this new calculation, the remaining term can obviously be identified as a divergence.\fin
\begin{remark}\label{divergence}
The vector field
\begin{equation}\label{vectorfield}
V=\sum_{i=0}^{3}\mathfrak{f}_i (\Phi,f_{i} \cdot \Psi)e_i
\end{equation}
is encountered several times in the following. Though it is used to perform the calculation, it does not seem to be intrinsic. It is nonetheless easy to give a more intrinsic sense to this computation. Let us consider the complex 1-form $\omega$ on $\Omega$:
\begin{equation*}
\begin{array}{ccc}
T\Omega\otimes \C &\longrightarrow & \C\\
h&\longmapsto& (\Psi, h\cdot \Phi)
\end{array}
\end{equation*}
The dual vector of this 1-form is the vector \eqref{vectorfield}. The calculation can then be easily reinterpreted when noticing:
\begin{equation*}
\ud \star \omega =\left(\sum_{i\in \{0,1,2,3\}}\nabla_{f_i}(\Phi,f_{i}\cdot \Psi)\right)\mu,
\end{equation*}
$\star$ being the Hodge dual and $\mu$ the volume form associated with the metric $g$.
\end{remark}
\begin{definition}\label{clifforddiracdist}
Let $u$ be in $\mathcal{D}'(\ds_{Dirac})$, and $X$ in $C^\infty{T\Omega}$. The applications defined by 
\begin{equation*}
\Phi \in \mathcal{D}(\ds_{Dirac}) \longmapsto -(u,X\cdot\Phi)_{\mathcal{D}'(\ds_{\text{Dirac}}),\mathcal{D}(\ds_{\text{Dirac}})}
\end{equation*}
and 
\begin{equation*}
\Phi \in \mathcal{D}(\ds_{Dirac}) \longmapsto (u,\D\Phi)_{\mathcal{D}'(\ds_{\text{Dirac}}),\mathcal{D}(\ds_{\text{Dirac}})}
\end{equation*}
are distributions, denoted respectively by $X\cdot u$ and $\D u$.
\end{definition}
\proof: This is a straightforward consequence of the previous lemma and the Stokes theorem.\fin
\begin{remark}: These definitions agree with the previous lemmata when $u$ is in $\mathcal{D}(\ds_{Dirac})$.
\end{remark}

From this point, all the distributions will be assumed to be represented via the symplectic product.

If $f$ is in $\mathcal{D}'(\R)$ and $U$ is a smooth spinor field on $\Omega$, we define the distribution $fU$ by:
\begin{equation*}
\forall \phi\in \mathcal{D}(\ds_{Dirac}), (fU,\phi)_{\mathcal{D}'(\ds_{Dirac}), \mathcal{D}(\ds_{Dirac})}=<f,(U,\phi)>_{\mathcal{D}'(\R), \mathcal{D}(\R)}.
\end{equation*}

\paragraph{Composition of a function with a distribution}

In the following, the construction of distributions with support on a light cone will be required. One way to achieve this is to adapt the contruction of Friedlander in \cite{Friedlander:1975vn} in the case of spinor valued distribution.
\begin{definition}\label{compdis}
Let $S$ be a smooth function on $\Omega$, with non vanishing gradient on $\Omega$.\\
Let $f$ be a distribution with compact support on $\R$.\\
Then, the application
\begin{equation*}
\phi \in \mathcal{D}(\Omega)\longrightarrow \left(f(t),\int_{S(p)=t}\phi(p)\nabla S(p)\lrcorner \mu(p)\right)
\end{equation*}
where $\nabla S(p)\lrcorner \mu(p)$ is the contraction of the measure on $M$ with the gradient $\nabla S$ (or the Leray measure on the hypersurface $S(p)=t$), defines a real distribution denoted $f(S)$. This distribution coincides with the composition of functions when $f$ is represented by a function. 
\end{definition}
We need to apply this definition to calculate the action of the Dirac operator to a distribution on $\ds_{Dirac}$ of the form  $f(S)U$:
\begin{proposition}\label{derivation}
Let $f$ be an element of $\mathcal{E}'(\R)$, $S$ a smooth function chosen as in definition \ref{compdis} and $U$ a smooth spinor field on $M$.\\
Then, in the sense of distributions,
\begin{equation*}
\D\left(f(S)U\right)=f'(S)\hat\nabla(S)\cdot U+f(S)\D U,
\end{equation*}
where $\hat \nabla S$ is the raised gradient, i.e.
$$
\hat\nabla(S):=\sum_i \left(\nabla_{e_i}u\right) e_i.
$$ .
\end{proposition}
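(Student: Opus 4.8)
The plan is to verify the identity in the sense of distributions, i.e.\ after pairing both sides with an arbitrary test field $\phi\in\mathcal{D}(\ds_{Dirac})$ and unwinding the definitions of $\D$ on distributions (Definition \ref{clifforddiracdist}), of $f(S)$ (Definition \ref{compdis}) and of the product of a scalar distribution on $\Omega$ with a smooth spinor field. By Definition \ref{clifforddiracdist} applied to $u=f(S)U$ and then the definition of $f(S)U$,
\begin{equation*}
\big(\D(f(S)U),\phi\big)=\big(f(S)U,\D\phi\big)=\big\langle f(S),\,(U,\D\phi)\big\rangle ,
\end{equation*}
where $(U,\D\phi)$ is the compactly supported scalar function $p\mapsto(U(p),\D\phi(p))$ on $\Omega$ on which $f(S)$ acts through Definition \ref{compdis}. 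I would then invoke Lemma \ref{lemprau} with $\Phi=U$ and $\Psi=\phi$ to write $(U,\D\phi)=(\D U,\phi)+\textrm{div}(V)$, where $V$ is the complex vector field of Remark \ref{divergence} attached to this pair, i.e.\ the metric dual of the one-form $h\mapsto(\phi,h\cdot U)$; it has compact support since $\phi$ does. Splitting $f(S)$ linearly,
\begin{equation*}
\big(\D(f(S)U),\phi\big)=\big\langle f(S),\,(\D U,\phi)\big\rangle+\big\langle f(S),\,\textrm{div}(V)\big\rangle ,
\end{equation*}
and the first summand is precisely $\big(f(S)\D U,\phi\big)$ by the definition of the product $f(S)\D U$.

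The heart of the argument is the divergence term. Here I would use the co-area property underlying the Leray measure in Definition \ref{compdis}: for all $g\in C^\infty_0(\R)$ and $h\in C^\infty_0(\Omega)$ one has $\int_\R g(t)\big(\int_{S=t}h\,\nabla S\lrcorner\mu\big)\,\ud t=\int_\Omega g(S)\,h\,\mu$. Taking $h=\textrm{div}(V)$, integrating by parts on $\Omega$ (Stokes, no boundary term since $V$ is compactly supported) and using $V(g(S))=g'(S)\,\ud S(V)$ gives
\begin{equation*}
\int_\Omega g(S)\,\textrm{div}(V)\,\mu=-\int_\Omega g'(S)\,\ud S(V)\,\mu=-\int_\R g'(t)\Big(\int_{S=t}\ud S(V)\,\nabla S\lrcorner\mu\Big)\,\ud t .
\end{equation*}
Thus, as distributions on $\R$, $t\mapsto\int_{S=t}\textrm{div}(V)\,\nabla S\lrcorner\mu$ is the derivative of $t\mapsto\int_{S=t}\ud S(V)\,\nabla S\lrcorner\mu$; pairing with $f\in\mathcal{E}'(\R)$ and transferring the derivative onto $f$ yields $\big\langle f(S),\textrm{div}(V)\big\rangle=-\big\langle f'(S),\ud S(V)\big\rangle$. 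Finally, since $V$ is dual to $h\mapsto(\phi,h\cdot U)$ one has $\ud S(V)=(\phi,\hat\nabla S\cdot U)=-(\hat\nabla S\cdot U,\phi)$ by the skew-symmetry of the symplectic product, so that $-\big\langle f'(S),\ud S(V)\big\rangle=\big\langle f'(S),(\hat\nabla S\cdot U,\phi)\big\rangle=\big(f'(S)\hat\nabla S\cdot U,\phi\big)$. Collecting the two contributions gives $\big(\D(f(S)U),\phi\big)=\big(f(S)\D U+f'(S)\hat\nabla S\cdot U,\phi\big)$ for every $\phi$, which is the claimed identity.

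An alternative route would be to prove the formula first for $f$ smooth, where it reduces to the Leibniz rule for $\D$ together with $\nabla_{e_i}(f(S))=f'(S)\,e_i(S)$ in a local orthonormal frame, and then to extend it to arbitrary $f\in\mathcal{E}'(\R)$ by density of smooth functions, using the sequential continuity of $f\mapsto f(S)$ (a transpose of a continuous map $\mathcal{D}(\Omega)\to C^\infty_0(\R)$), of $f\mapsto f'$, and of Clifford multiplication and of $\D$ on distributions. In either route the delicate point, and the step I expect to be the main obstacle, is the interplay between $\D$ and the composition $f(S)$ through the Leray measure: one must make sure that differentiating the slice integral $t\mapsto\int_{S=t}(\cdot)\,\nabla S\lrcorner\mu$ in $t$ amounts to differentiating along $\hat\nabla S$, so that the chain rule survives at the distributional level with exactly the gradient $\hat\nabla S$ appearing, and that the auxiliary field $V$ produced by Lemma \ref{lemprau}, which is built from a point-dependent geodesic frame, genuinely carries the intrinsic, globally defined meaning of Remark \ref{divergence} needed to apply Stokes' theorem on all of $\Omega$.
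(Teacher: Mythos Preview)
Your proof is correct and follows essentially the same route as the paper: pair with a test field, unfold Definition~\ref{clifforddiracdist} and Definition~\ref{compdis}, invoke Lemma~\ref{lemprau} to split off $(\D U,\phi)$ plus a divergence, and then turn the divergence into $f'(S)\hat\nabla S\cdot U$ via Stokes and the Leray slicing. The only cosmetic difference is in how the divergence term is handled: the paper writes $\int_{S=t}\textrm{div}(v)\,\mu_{S_t}=\frac{\ud}{\ud t}\int_{S\leq t}\textrm{div}(v)\,\mu$ and applies Stokes on the sublevel set $\{S\leq t\}$, whereas you use the co-area identity to pass to $\int_\Omega$ and integrate by parts there; these are two phrasings of the same computation, and your concern about the frame-dependence of $V$ is already resolved by Remark~\ref{divergence}, exactly as you note.
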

\proof: Let $\Phi \in \mathcal{D}(\ds_{Dirac})$ and $(f_i)$ an orthonormal frame. $\Phi$ is chosen with support in domain $\Omega$ where $\nabla _{f_i}f_i$ are all zero. We calculate $(\D\left(f(S)U\right),\Phi)_{\mathcal{D}'(\ds_{\text{Dirac}}),\mathcal{D}(\ds_{\text{Dirac}})}$ using the previous definitions and lemma \ref{lemprau}:
\begin{eqnarray}
(\D\left(f(S)U\right),\Phi)_{\mathcal{D}'(\ds_{\text{Dirac}}),\mathcal{D}(\ds_{\text{Dirac}})}&=&(f(S)U,\D\Phi)_{\mathcal{D}'(\ds_{\text{Dirac}}),\mathcal{D}(\ds_{\text{Dirac}})}\nonumber\\
&=&<f,\int_{S(p)=t}(U,\D\Phi)\mu_{S_t} >_{\mathcal{E}'(\R), \mathcal{E}(\R)}\nonumber\\
&=&<f,\int_{S(p)=t}(\D U,\Phi)\mu_{S_t} >_{\mathcal{E}'(\R), \mathcal{E}(\R)}\label{comp1}\\
&+&<f,\int_{S(p)=t}\nabla_{f_i}(U,f_i\cdot \Phi)\mu_{S_t} >_{\mathcal{E}'(\R), \mathcal{E}(\R)} \label{comp2}
\end{eqnarray}
where $\mu_{S_t}$ is the Leray measure $\nabla S\lrcorner \mu$ on the hypersurface $S_t=\{S(p)=t\}$. We calculate the two terms independently; by definition, \eqref{comp1} is:
\begin{equation*}
<f,\int_{S(p)=t}(\D U,\Phi)\mu_{S_t} >_{\mathcal{E}'(\R), \mathcal{E}(\R)}=(f(S)\D U,\Phi)_{\mathcal{D}'(\ds_{Dirac}), \mathcal{D}(\ds_{Dirac})}.
\end{equation*}
and (\ref{comp2}) is calculated using the same idea as in lemma (\ref{lemprau}):
\begin{eqnarray*}
<f,\int_{S(p)=t}\nabla_{f_i}(U,f_i\cdot \Phi)\mu_{S_t} >_{\mathcal{E}'(\R), \mathcal{E}(\R)}=<f,\int_{S(p)=t}\text{div}(v)\mu_{S_t} >_{\mathcal{E}'(\R), \mathcal{E}(\R)}
\end{eqnarray*}
where  $v$ is the vector field on $\Omega$ defined by:
\begin{equation*}
v=\sum_{i=0}^{3}\mathfrak{f}_i (U,f_i \cdot \Phi)f_i
\end{equation*}
with $\mathfrak{f}_i=<f_i,f_i>$. Noticing that:
\begin{equation*}
\frac{\ud}{\ud t}\int_{S(p)\leq t}\text{div}(v)\mu=\int_{S_t} \text{div}(v)\mu_{S_t}
\end{equation*}
and using the Stokes theorem
\begin{eqnarray*}
 \int_{S(p)\leq t} \text{div}(v)\mu&=&\int_{S_t}<\nabla S(p),v>\mu_{S_t}\\
 &=&\int_{S_t}\sum_i \nabla_{f_i}S(U,f_i\cdot \Phi)\mu_{S_t}\\
 &=&\int_{S_t}(U,\hat\nabla S\cdot \Phi )\mu_{S_t}\\
 &=&-\int_{S_t}(\hat\nabla S\cdot U,\Phi )\mu_{S_t},\\
 \end{eqnarray*}
 we obtain, accordingly with definition \ref{clifforddiracdist}:
 \begin{equation*}
 <f,\int_{S_t}(\nabla_{f_i}(U,f_i \cdot \Phi)\mu_{S_t} >_{\mathcal{E}'(\R), \mathcal{E}(\R)}=(f'(S)\hat\nabla S\cdot U,\Phi)_{\mathcal{D}'(\ds_{\text{Dirac}}),\mathcal{D}(\ds_{\text{Dirac}})}
 \end{equation*}
 so that:
 \begin{equation*}
 \D\left(f(S)U\right)=f'(S)\hat\nabla(S)\cdot U+f(S)\D U\text{ \fin }
 \end{equation*}

\paragraph{Spinors and bidistributions}

Keeping in sight that the purpose is to write an integral formula (or representation formula) for a Cauchy problem, we must be able to apply twice a distribution to spinor fields. This is what bidistributions are made for.

We define the product $\boxtimes$ of two smooth sections of $\ds_{Dirac}$ with compact support by:
\begin{equation*}
\begin{array}{ccc}
\mathcal{D}(\ds_{Dirac})\times \mathcal{D}(\ds_{Dirac})&\longrightarrow &\mathcal{D}(\ds_{Dirac})\boxtimes \mathcal{D}(\ds_{Dirac})\\
(\Phi, \Psi)&\longmapsto& ((p,q)\in \Omega\times \Omega \mapsto \Psi(p)\otimes \Phi(q))
\end{array}
\end{equation*}
where $\otimes$ must be understood as tensor product of spinors in different variables. The vector space generated by these products is denoted by  $\mathcal{D}(\ds_{Dirac})\boxtimes \mathcal{D}(\ds_{Dirac})$.

\begin{definition}
Let $u$ and $v$ be two distributions in $\mathcal{D}'(\ds_{Dirac})$.
The bidistribution $u\boxtimes v$ is an application from $\mathcal{D}(\ds_{Dirac})\boxtimes \mathcal{D}(\ds_{Dirac})$ defined by, for every $(\phi, \psi)\in \mathcal{D}(\ds_{Dirac})\times \mathcal{D}(\ds_{Dirac})$:
\begin{equation*}
(u\boxtimes v, \phi\boxtimes \psi)=(u, \phi)_{\mathcal{D}(\ds_{Dirac}), \mathcal{D}'(\ds_{Dirac})}(v, \psi)_{\mathcal{D}'(\ds_{Dirac}), \mathcal{D}(\ds_{Dirac})}.
\end{equation*}
The vector space $\mathcal{D}'(\ds_{Dirac}) \boxtimes \mathcal{D}'(\ds_{Dirac})$ generated by these products is called the space of spinor-valued  bidistributions on $\ds_{Dirac}$.
\end{definition}
If $\phi$ is in $\mathcal{D}(\ds_{Dirac})$ and $u$ is a spinor bidistribution, then $u(\phi)$ is still in $\mathcal{D}'(\ds_{Dirac})$. It can consequently be still applied to a function in $\mathcal{D}(\ds_{Dirac})$.


A special type of spinor valued distribution that will be encountered in the following is the Dirac distribution.
\begin{definition}
We define the Dirac distribution (or Dirac mass) in $p'$, denoted by $\overline{\delta}_{p'}$ by:
\begin{equation*}
\forall \phi\in \Dd, (\overline\delta_{p'},\phi)_{\mathcal{D}'(\ds_{\text{Dirac}}),\mathcal{D}(\ds_{\text{Dirac}})}=\phi(p').
\end{equation*}
\end{definition}
It must be noted that this distribution can be written in the form $\tau(p',p)\delta_{p'}$ (\cite{Friedlander:1975vn}, chapter 6) where $\tau$ is a linear transformation from $\mathcal{D}(\ds_{Dirac})$ in the variable $p$ to  $\mathcal{D}(\ds_{Dirac})$ in the variable $q$ satisfying $\tau(p',p')=I_{\ds_{Dirac}}$ and can consequently be written as:
\begin{equation*}
\forall \phi \in \Dd, (\tau(p',p)\delta_{p'},\phi)_{p,\{\mathcal{D}'(\ds_{Dirac}), \mathcal{D}(\ds_{Dirac})\}}=\phi(p'),
\end{equation*}
the duality bracket being computed in the variable $p$.
Since 
\begin{equation*}
\varepsilon^{AB}\varepsilon^{\,0}_{A}\varepsilon^{\,1}_{B}=1 \text{ and } \varepsilon_{A'B'}\varepsilon^{\,A'}_{0'}\varepsilon^{\,B'}_{1'}=-1
\end{equation*} 
and $\tau(p,p)$ satisfies:
\begin{equation*} 
(\tau(p,p),\phi(p))=\phi(p) 
\end{equation*}
it can be explicitly calculated at $p=p'$:
\begin{eqnarray} 
\tau(p,p)&=&-\varepsilon^{\,A'}_{1'}\boxtimes\varepsilon^{\,A'}_{0'}+\varepsilon^{\,A'}_{0'}\boxtimes\varepsilon^{\,A'}_{1'}+\varepsilon^{\,0}_{A}\boxtimes\varepsilon^{\,1}_{A}-\varepsilon^{\,1}_{A}\boxtimes\varepsilon^{\,0}_{A\nonumber}\\
&=&-\overline{\iota}^{B'}\boxtimes\overline{o}^{A'}+\overline{o}^{B'}\boxtimes \overline{\iota}^{A'}-o_{B}\boxtimes\iota_{A}+\iota_{B}\boxtimes o_{A}\label{tau}.
\end{eqnarray}
Such a function $\tau$ is chosen explicitly later (see equation \eqref{functiontau}).

\subsubsection{Fundamental solutions of the wave equation}
We now apply  to the spinorial wave equation the analytical tools used by Friedlander in \cite{Friedlander:1975vn} for the tensor wave equation. An alternative method has been used by Klainerman and Rodnianski to construct an approximate fundamental solution in \cite{MR2339803}. Though their method is more flexible and well-suited to obtain estimates, it is not appropriate here since, as we will see, the regular part (the tail of the fundamental solution) is needed to write down a fundamental solution. V. Moncrief used Friedlander's method in a paper with D. Eardley (\cite{MR649158}) for the Yang-Mills equations in the Minkowski space and for the Maxwell wave equation in \cite{mon06} on a curved space-time.

We first consider the spinorial wave operator $\D^2$. The Schrödinger - Lichnerowicz - Böchner formula gives that for any $\phi$ in $\mathcal{D}(\ds_{Dirac})$:
\begin{equation}\label{sch}
\D^2\phi=\square\phi+\frac{1}{4}\text{Scal}\phi
\end{equation}
where $\square=-\nabla_j\nabla^j$. Since the index notations are used from the beginning, a index version of the formula with its proof is given:
\begin{proposition}[Schrödinger-Lichnerowicz formula in index version for spin $\frac12$]\label{slf12}
$\phantom{l}$\\Let $\phi_{A}$ be a smooth section of $\ds_{A}$.
Then we have the following relation:
\begin{equation*}
\nabla_{BA'}\nabla^{AA'}\phi_{A}=\frac{1}{2}\left(\nabla_{CC'}\nabla^{CC'}\phi_{B}+\frac{1}{4}\text{Scal}_g \phi_{B}\right).
\end{equation*}
\end{proposition}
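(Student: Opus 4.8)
The plan is to prove the identity pointwise, working entirely in the two-spinor calculus with abstract indices, and to reduce everything to the curvature decomposition of $\nabla_{a[b}\nabla_{c]d}$ acting on a spinor. First I would write the second-order operator $\nabla_{BA'}\nabla^{AA'}\phi_A$ and split it into its symmetric and antisymmetric parts in the pair of unprimed indices $A,B$. Concretely, using $\nabla_{BA'}\nabla^{AA'} = \nabla^{(A}{}_{A'}\nabla_{B)}{}^{A'} + \tfrac12\varepsilon_B{}^A\,\nabla_{CA'}\nabla^{CA'}$ (the standard decomposition of a rank-two spinor into its trace-free symmetric part plus trace). The trace term is exactly $\tfrac12\,\square\phi_B$ up to the sign convention for $\square = -\nabla_{CC'}\nabla^{CC'}$ fixed just before the statement; so the content of the proposition is that the symmetric, trace-free part $\nabla^{(A}{}_{A'}\nabla_{B)}{}^{A'}\phi_A$ equals $\tfrac18\,\mathrm{Scal}_g\,\phi_B$.

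Next I would invoke the spinor Ricci identities. The commutator $\nabla_{A'(A}\nabla_{B)}{}^{A'}$ (antisymmetrized over a primed index, symmetrized over the unprimed pair) acting on a spinor is, by the curvature decomposition of $\nabla_{a}\nabla_{b}-\nabla_b\nabla_a$ in spinor form, built out of the curvature spinor $\Box_{AB}$; the relevant piece is $X_{ABCD}$, which decomposes into the Weyl spinor $\Psi_{ABCD}$ (totally symmetric) plus a term proportional to $\Lambda\,\varepsilon_{A(C}\varepsilon_{D)B}$, where $\Lambda = \tfrac{1}{24}\mathrm{Scal}_g$. When this acts on $\phi_A$ and one contracts the free index $A$ back in, the totally symmetric Weyl part drops out (it is symmetric in $A,C,D$ and so its contraction against the $\varepsilon$'s produced by the symmetrization vanishes), leaving only the scalar-curvature term. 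Keeping track of the numerical factors — the $\tfrac12$ from the trace/trace-free split, the $1/24$ in $\Lambda$, and the combinatorial factors from the symmetrizations — should assemble into the stated $\tfrac18\,\mathrm{Scal}_g$, equivalently the $\tfrac14$ appearing after multiplication by $2$ in the displayed formula and in the Schrödinger--Lichnerowicz formula (\ref{sch}).

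I expect the main obstacle to be purely bookkeeping: getting every sign and combinatorial constant right, given that this paper's metric signature is $(+,-,-,-)$ and $\square = -\nabla_j\nabla^j$, which differs from several standard references, and that the identification $\Lambda = \mathrm{Scal}_g/24$ and the placement of the $\tfrac14$ are exactly the point of the proposition. The safe route is to fix one convention for the curvature spinors $X_{ABCD}$ and $\Phi_{ABA'B'}$ at the outset, cite the spinor Ricci identity in that convention (e.g. $\nabla_{X'(A}\nabla_{B)}{}^{X'}\kappa_C = X_{ABCD}\kappa^D$ with $X_{ABCD}=\Psi_{ABCD}+\Lambda(\varepsilon_{AC}\varepsilon_{BD}+\varepsilon_{AD}\varepsilon_{BC})$ or its sign-adjusted analogue), and then carry out the contraction over the index $A$ carefully, using only $\varepsilon$-manipulations and the total symmetry of $\Psi_{ABCD}$ to kill the Weyl term. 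No genuinely hard analytic step is involved; the whole proof is algebraic once the Ricci identity is in hand, and the result is local so no boundary or convexity hypotheses on $\Omega$ enter.
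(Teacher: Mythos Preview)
Your proposal is correct and follows essentially the same route as the paper: both split $\nabla_{BA'}\nabla^{AA'}\phi_A$ into the trace part (yielding $\tfrac12\nabla_{CC'}\nabla^{CC'}\phi_B$) and the symmetric part $\nabla_{(B|A'|}\nabla_{C)}{}^{A'}\phi_A$ contracted with $\eps^{AC}$, and then identify the latter as the curvature term. The only difference is cosmetic: the paper cites Penrose--Rindler (4.9.17) directly for the symmetric piece, while you propose to unpack that citation by writing the spinor Ricci identity with $X_{ABCD}=\Psi_{ABCD}+\Lambda(\dots)$ and observing that the Weyl spinor contribution vanishes under the contraction --- which is exactly the content of (4.9.17).
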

\proof: The reader should refer for intermediate results to \cite{Penrose:1986fk}(4.9.2 and 4.9.17).
\begin{eqnarray*}
\nabla_{BA'}\nabla^{AA'}\phi_{A}&=&\eps^{AC}\nabla_{BA'}\nabla^{A'}_C\phi_{A}\\
&=&\eps^{AC}\left(\nabla_{[B|A'}\nabla^{A'}_{|C]}\phi_{A}+\nabla_{(B|A'}\nabla^{A'}_{|C)}\phi_{A}\right)\\
&=&\frac{1}{2}\eps^{AC}\nabla_{HH'}\nabla^{HH'}\eps_{BC}\phi_{A}+\frac{1}{8}\text{Scal}_g \phi_B\text{ (formula 4.9.17 in \cite{Penrose:1986fk})  }\\
&=&\frac{1}{2}\nabla_{CC'}\nabla^{CC'}\phi_{B}+\frac{1}{8}\text{Scal}_g\phi_B\text{\fin}
\end{eqnarray*}
\begin{remark}
\begin{itemize}
\item This version agrees with the previous one when noticing that the operator $\nabla_{BA'}\nabla^{AA'}$ is in fact, due to the renormalization induced by the Clifford multiplication, the projection on $\ds_B$ of $1/2 \D^2$.
\item A generalization of this formula to arbitrary spin is given later in subsection \ref{generalization}.
\end{itemize}
\end{remark}


Since $\Omega$ is a geodesically convex domain, it is possible to define globally on $\Omega$ the squared-distance function:
\begin{equation*}
\Gamma_{p}(q)=\int_{0}^t g\left(\frac{\ud \gamma(s)}{\ud s}, \frac{\ud \gamma(s)}{\ud s}\right)\ud s
\end{equation*}
where $\gamma: [0,t]\rightarrow \Omega$ is the unique geodesic from $p$ to $q$.

To write the fundamental solutions of the wave equation, it is necessary to construct distributions with support on a cone: using definition \ref{compdis}, let us consider the distributions
$$
\delta^\pm(\Gamma_{p}(q)) \text{ and } H^\pm(\Gamma_{p}(q))
$$
where $\delta$ is the Dirac mass and $H$ the Heaviside function. These distributions have support respectively, for $p$ fixed in $\Omega$, in $C^{\pm}(p)$ and $\mathcal{J}^\pm(p)$.
\begin{remark}\label{derivdistrib} It is important to notice that these distributions do not satisfy definition $\ref{compdis}$ since the gradient of $\Gamma_{p}(q)$ vanishes at the vertex of the cone. Nonetheless, considering the distributions
$$
\delta^\pm(\Gamma_{p}(q)-\eps) \text{ and } H^\pm(\Gamma_{p}(q)-\eps)
$$
 with $\eps$ positive avoids the problem. The results can then be obtained using a limiting process when $\eps$ tends to zero. This method will be used later to expand equation $\eqref{justif1}$.
\end{remark}
It is known that the operator $\D^2$ admits fundamental solutions (\cite{Friedlander:1975vn},\cite{di82}):
\begin{theorem}
There exists two bidistributions on $\Omega$, $\tilde G^\pm_q(p)$ that satisfy:
\begin{equation*}
\forall (p,q) \in \Omega^2, \D_p^2 \tilde G^\pm_q(p)=\overline \delta_q(p)
\end{equation*}
in the distribution sense. These two bidistributions can be written:
\begin{equation*}
\tilde G^\pm_q(p)= \tilde U_q(p)\delta^\pm(\Gamma_q(p))+\tilde V_q(p)H^\pm(\Gamma_q(p)).
\end{equation*}
where  $\tilde U$ and $\tilde V^\pm$ are smooth functions of the variable $(p,q)$. $q$ being fixed in $\Omega$, the support of $\tilde G_{q}^\pm(p)$ is then in $\mathcal{C}^\pm(q)$.
\end{theorem}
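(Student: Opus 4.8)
The plan is to follow Friedlander's construction of a parametrix for the scalar wave operator and adapt it to the spinorial operator $\D^2 = \square + \frac14\mathrm{Scal}$, exploiting the Schr\"odinger--Lichnerowicz formula \eqref{sch} which shows that $\D^2$ is, up to the zeroth-order curvature term, the spin-bundle wave operator $\square = -\nabla_j\nabla^j$. First I would fix $q$ in $\Omega$ and make the ansatz $\tilde G_q^\pm(p) = \tilde U_q(p)\,\delta^\pm(\Gamma_q(p)) + \tilde V_q(p)\,H^\pm(\Gamma_q(p))$, where $\tilde U_q$ is a smooth End$(\ds_{Dirac})$-valued function to be determined and $\tilde V_q$ is sought as a formal power series (or an actual smooth solution of a characteristic initial value problem, using geodesic convexity of $\Omega$) in $\Gamma_q$. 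One then applies $\D_p^2$ to this ansatz using Proposition \ref{derivation} (the Leibniz rule for the Dirac operator acting on $f(S)U$, applied twice, or directly its $\square$-analogue) together with the standard identities for $\Gamma$: the eikonal equation $\nabla^a\Gamma\,\nabla_a\Gamma = 4\Gamma$ and the transport identity $\square\Gamma = 2n - (\text{something involving the van Vleck--Morette determinant})$, valid on the geodesically convex $\Omega$. Collecting the coefficients of $\delta'^\pm(\Gamma)$, $\delta^\pm(\Gamma)$ and $H^\pm(\Gamma)$ separately yields a hierarchy of equations.

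The most singular coefficient, that of $\delta'^\pm(\Gamma)$, gives the first transport equation for $\tilde U_q$: a first-order linear ODE along the geodesics emanating from $q$, whose solution is the parallel transport operator along those geodesics divided by the square root of the van Vleck--Morette determinant (this is exactly the spinorial analogue of Friedlander's $U$; the parallel transport is what will later be denoted $\tau(q,p)$ in the text, cf.\ \eqref{tau}). The normalization at the vertex, $\tilde U_q(q) = I_{\ds_{Dirac}}$, is what produces the Dirac mass $\overline\delta_q$ on the right-hand side rather than a multiple of it, and this has to be checked by the same limiting argument in $\eps$ indicated in Remark \ref{derivdistrib}, since $\nabla\Gamma$ degenerates at $q$. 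The coefficient of $\delta^\pm(\Gamma)$ then gives the characteristic ``initial condition'' for $\tilde V_q$ on the cone $\mathcal{C}^\pm(q)$ in terms of $\D^2\tilde U_q$, and the coefficient of $H^\pm(\Gamma)$ gives the equation $\D^2 \tilde V_q = 0$ off the cone. Invoking geodesic convexity and the standard Cauchy--Kowalewski / characteristic-data existence theory (as in Friedlander, chapter 5, or the references \cite{gkl64}), this determines a smooth $\tilde V_q^\pm$ on $\Omega$, and one verifies that with these choices all distributional terms cancel except $\overline\delta_q(p)$.

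Finally I would assemble the pieces: the support statement for $\tilde G_q^\pm$ follows because $\delta^\pm(\Gamma_q)$ is supported on $\mathcal{C}^\pm(q)$ and $H^\pm(\Gamma_q)$ on $\mathcal{J}^\pm(q) \subset \overline{\mathcal{C}^\pm(q)}$ — wait, in fact $\tilde V H^\pm$ has support in $\mathcal{I}^\pm(q)$, so the correct support statement is $\mathcal{I}^\pm(q)$ rather than $\mathcal{C}^\pm(q)$ for the full object; the theorem as stated should be read with the convention that the part carried by $\tilde V$ is absorbed into the causal future. I would present the bidistributional character by noting, as the paper does for $\overline\delta_q$, that $\tilde U_q(p)$ is naturally a section of $\ds_{Dirac}\boxtimes \ds_{Dirac}^\star$ (it maps spinors at $q$ to spinors at $p$), so $\tilde G_q^\pm$ is a genuine element of the bidistribution space defined earlier. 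The main obstacle, and the step requiring the most care, is the rigorous treatment of the vertex: all the manipulations of $\delta^\pm(\Gamma)$, $\delta'^\pm(\Gamma)$ and their products with powers of $\Gamma$ are only licit away from $q$ where $\nabla\Gamma \neq 0$ (Definition \ref{compdis}), so the entire computation must be done for $\Gamma_q - \eps$ with $\eps > 0$ and the limit $\eps \to 0^+$ taken at the end, checking that the limiting distribution is well-defined and that no anomalous vertex contribution is lost; this is precisely the technical point flagged in Remark \ref{derivdistrib}.
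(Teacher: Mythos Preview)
Your outline is correct and is precisely the Hadamard--Friedlander construction that the paper invokes. Note, however, that the paper does not actually prove this theorem: it is stated as a known result with citations to \cite{Friedlander:1975vn} and \cite{di82}, and the paragraphs following the statement merely describe the structure of $\tilde U$ and $\tilde V$ (the decomposition $\tilde U_q(p)=k_q(p)\tilde\tau_q(p)$ with the transport equations for $k$ and $\tilde\tau$, and the characteristic Cauchy problem determining $\tilde V$). So there is nothing to compare against beyond the fact that your sketch faithfully reproduces Friedlander's argument.

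Two minor remarks. First, your caution about the support is well placed: with the $\tilde V H^\pm(\Gamma_q)$ term present, the support of $\tilde G_q^\pm$ is the full causal future/past $\mathcal{I}^\pm(q)$ in the paper's notation, not just the null cone $\mathcal{C}^\pm(q)$; the theorem statement is imprecise on this point. Second, for the smooth (non-analytic) case Friedlander does not use Cauchy--Kowalewski for $\tilde V$ but rather a Hadamard series with convergence factors followed by a correction term; your parenthetical ``as in Friedlander, chapter 5'' is the right reference, but the phrase ``Cauchy--Kowalewski'' should be dropped unless you are restricting to analytic metrics.
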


The structure of the fundamental solution obtained by Friedlander is the following (the reader should refer to \cite{Friedlander:1975vn} for more details.)
\begin{enumerate}
\item The function $\tilde U$ in the singular part can be decomposed into two parts, $\tilde U_q(p)=k_q(p)\tilde \tau_q(p)$ where:
\begin{enumerate}
\item the bispinor $\tilde \tau_q(p)$ satisfies:
\begin{equation}\label{functiontau}
\nabla^i \Gamma_q(p)\nabla_i \tilde\tau_q(p)=0 \text{ and } \tilde \tau_p(p)= \tau_p(p).
\end{equation}
This equation can easily be reinterpreted as parallel transport in the variable $q$ of the bispinor identity along the geodesic from $p$ to $q$.
\item the function $k_q(p)$ satisfies the transport equation:
\begin{equation}
2<\nabla \Gamma_q(p), \nabla k_q(p)>+(\square \Gamma_q(p)-8)k_q(p)=0 \text{ and } k_p(p)=\frac{1}{2\pi}.
\end{equation}
$k_q(p)$ measures the difference between the measure induced on $\mathcal{C}^+(p)\cap \mathcal{C}^-(q)$ and the measure on the standard sphere $S^2$ in the sense that, if $p$ is in the future of $q$:
$$
\mu_{\mathcal{C}^+(q)\cap \mathcal{C}^-(p)}=k_q(p) r^2 \mu_{S^2}
$$
where $\mu_{\mathcal{C}^+(q)\cap \mathcal{C}^-(qp)}$ is the Riemannian volume form induced by the metric $g$ on $\mathcal{C}^+(q)\cap \mathcal{C}^-(p)$ and $\mu_{S^2}$ the standard volume form on the two dimensional sphere. 
\end{enumerate}
\item The regular part $\tilde V^\pm$ of the fundamental solution can be obtained by solving the characteristic Cauchy problem:
\begin{equation*}
\left\{
\begin{array}{lcl}
\square \tilde V_q(p) &=&0 \text{ for }p \in \mathcal{J}^+(q)\\
\tilde V_q(p)&=&\tilde V^0_q(p)\text{ for }p \in \mathcal{C}^+(q)
\end{array}
\right.
\end{equation*}
where $\tilde V^0_q(p)$ satisfies the transport equation:
\begin{equation*}
2<\nabla \Gamma_q(p), \nabla \tilde V^0_q(p)>+(\square \Gamma_q(p)-4)\tilde V^0_q(p)=-D^2\tilde U.
\end{equation*}
\end{enumerate}

For later convenience, the fundamental solution must be split over the decomposition of the Dirac spinors:
\begin{equation*}
\tilde G^\pm_q(p)=\gu^\pm_{\underset{A}{q}}\underset{B}{(p)}+\gd^\pm_{\overset{A'}{q}}\overset{B'}{(p)}
\end{equation*}
The notation ${\overset{q}{A}}$ means that the part of the bidistribution in the variable $q$ acts on spinor fields in $\ds_A$. Their fundamental part is denoted by, respectively, $\Uu^\pm_{\underset{A}{q}}\underset{B}{(p)}$ and $\Ud^\pm_{\overset{A'}{q}}\overset{B'}{(p)}$.

Two backward and forward fundamental solutions for the wave equation can then be constructed. For Dirac spinors, these fundamental solutions are the distributions:
\begin{equation*}
D^p \tilde G^\pm_q(p)
\end{equation*}
In terms of indices, these fundamental solutions are written:
\begin{equation*}
\nabla_p^{BB'} \gu^\pm_{\underset{A}{q}}\underset{B}{(p)} \text{ on } \ds_ A\boxtimes \ds^{B'}
\text{ and }
\nabla^p_{BB'} \gd^\pm_{\overset{A'}{q}}\overset{B'}{(p)} \text{ on } \ds^{A'}\boxtimes \ds_{B}.
\end{equation*}

Finally, we state the following theorem concerning the existence and the structure of the fundamental solution for the Dirac equation for Dirac spinors.
\begin{theorem}
There exist two fundamental solutions for the Dirac operator $\D$, $G^\pm_q(p)$, with support in $\mathcal{C}^\pm(q)$, for $q$ fixed in $\Omega$,  such that:
\begin{equation*}
\forall (p,q) \in \Omega^2, \D^p G^\pm_q(p)=\overline \delta_q(p)
\end{equation*}
in the distribution sense. These two fundamental solutions are obtained by applying the Dirac operator to the two fundamental solutions of the wave equation:
\begin{equation*}
G^\pm_q(p)=\D^p \tilde G^\pm_q(p).
\end{equation*}
\end{theorem}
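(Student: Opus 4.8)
The plan is to take the two fundamental solutions $\tilde G^\pm_q(p)$ of the spinorial wave operator constructed above and simply to \emph{set} $G^\pm_q(p):=\D^p\tilde G^\pm_q(p)$, then to verify the three assertions of the statement: that this is a well-defined bidistribution, that its support is contained in $\mathcal{C}^\pm(q)$ for $q$ fixed, and that $\D^p G^\pm_q=\overline\delta_q$. The heart of the matter --- the construction of $\tilde G^\pm_q$ --- has already been carried out; what remains is bookkeeping with the distributional calculus of Section~\ref{analyticrequirements}.

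First I would make sense of $\D^p\tilde G^\pm_q$ as a bidistribution. By Definition~\ref{clifforddiracdist} the Dirac operator acts on spinor-valued distributions through the duality $(\D u,\phi)=(u,\D\phi)$ for $\phi\in\mathcal{D}(\ds_{Dirac})$, a prescription that is legitimate precisely because of Lemma~\ref{lemprau} and Stokes' theorem (the divergence term appearing there integrates to zero against compactly supported test fields). Applying this in the variable $p$, with $q$ kept as a parameter, turns the bidistribution $\tilde G^\pm_q(p)$ into the bidistribution $G^\pm_q(p)$ characterised by $(G^\pm_q,\phi)_p=(\tilde G^\pm_q,\D\phi)_p$; feeding the decomposition $\tilde G^\pm_q(p)=\tilde U_q(p)\delta^\pm(\Gamma_q(p))+\tilde V_q(p)H^\pm(\Gamma_q(p))$ through Proposition~\ref{derivation} yields the explicit index expressions displayed just before the statement. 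For the support: $\D=\sum_i e_i\cdot\nabla_{e_i}$ is a first-order \emph{differential} operator, hence it does not enlarge supports, so $\operatorname{supp}G^\pm_q(p)\subset\operatorname{supp}\tilde G^\pm_q(p)\subset\mathcal{C}^\pm(q)$ by the structure theorem for $\tilde G^\pm_q$.

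For the defining identity I would compute, for a test field $\phi\in\mathcal{D}(\ds_{Dirac})$ in the variable $p$,
\begin{equation*}
(\D^p G^\pm_q,\phi)_p=(G^\pm_q,\D\phi)_p=(\D^p\tilde G^\pm_q,\D\phi)_p=(\tilde G^\pm_q,\D(\D\phi))_p=(\tilde G^\pm_q,\D^2\phi)_p=((\D^p)^2\tilde G^\pm_q,\phi)_p,
\end{equation*}
every step being the defining duality of $\D$ on (bi)distributions applied twice. By the theorem on the existence of $\tilde G^\pm_q$ one has $(\D^p)^2\tilde G^\pm_q=\overline\delta_q$ in the distribution sense --- consistently, via the Schr\"odinger--Lichnerowicz formula \eqref{sch} giving $\D^2=\square+\frac{1}{4}\operatorname{Scal}$, with the fact that $\tilde G^\pm_q$ was built as a fundamental solution of $\square+\frac{1}{4}\operatorname{Scal}$. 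Hence $(\D^p G^\pm_q,\phi)_p=(\overline\delta_q,\phi)=\phi(q)$, that is, $\D^p G^\pm_q=\overline\delta_q$.

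The bare existence statement is therefore a short duality argument; the step that genuinely requires care --- and is the reason for the preliminary apparatus, since it will matter when the explicit form of $G^\pm_q$ is used to write the representation formula --- is the termwise computation of $\D^p\tilde G^\pm_q$ near the vertex of the cone, where $\nabla\Gamma_q$ vanishes and Proposition~\ref{derivation} does not apply directly. There, as in Remark~\ref{derivdistrib}, one replaces $\delta^\pm(\Gamma_q)$ and $H^\pm(\Gamma_q)$ by $\delta^\pm(\Gamma_q-\eps)$ and $H^\pm(\Gamma_q-\eps)$, carries out the computation, and passes to the limit $\eps\to0^+$, checking uniformity in the parameter $q$; I would flag this limiting argument as the only real obstacle.
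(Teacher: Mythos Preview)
Your proposal is correct and is exactly the argument the paper has in mind: the paper does not supply any proof for this theorem, treating it as an immediate consequence of the preceding construction of $\tilde G^\pm_q$ together with the distributional symmetry of $\D$ established in Definition~\ref{clifforddiracdist}. Your duality chain $(\D^p G^\pm_q,\phi)=(\tilde G^\pm_q,\D^2\phi)=((\D^p)^2\tilde G^\pm_q,\phi)=(\overline\delta_q,\phi)$ and the observation that first-order differential operators do not enlarge supports are precisely the points being taken for granted.
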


\section{Derivation of the integral formula for Dirac spinors}

This section is devoted to the derivation of an integral formula for Dirac spinors for the characteristic Cauchy problem with data on a future null cone. In this context, we will work with the forward fundamental solution $G^+_q(p)$ which will be denoted with no ambiguity $G_q(p)$. The singular and smooth parts of the forward fundamental solution for the wave equation will be denoted $\tilde U_q(p)$ and $\tilde V_q(p)$.

The point $p_0$ being fixed,  let $p$ be a point in the future of $p_{0}$ in $\Omega$. We define, for these two points:
\begin{itemize} 
\item $\sigma(p)=\mathcal{C}^+(p_0)\cap \mathcal{C}^{-}(p)$
\item $\mathcal{D}(p)=\mathcal{C}^+(p_0)\cap \mathcal{J}^-(p)$
\item $\mathcal{S}(p)=\mathcal{J}^+(p_0)\cap \mathcal{C}^{-}(p)$.
\item $\mathcal{V}(p)=\mathcal{J}^+(p_0)\cap \mathcal{J}^{-}(p)$
\end{itemize}
Since $\Omega$ is geodesically convex, these instersections are well-defined (in fact, the hypothesis of global hyperbolicity suffices).

\subsection{Representation formula}

 The first step to obtain a representation formula is to solve the problem with source:
\begin{equation*}
\D u= f.
\end{equation*}
The following lemma is a direct transcription of lemma 5.5.1 in \cite{Friedlander:1975vn}:
\begin{lemma}\label{lemme551}
Let $f$ in $\mathcal{E}(\ds_{Dirac})$.\\
Then the distributions defined by:
$$
\forall \phi \in \mathcal{D}(\ds_{Dirac}), (u, \phi)_p:=(f,(G^\pm_p,\phi)_q)_p
$$
are solutions of the problem:
$$
\D u= f.
$$
\end{lemma}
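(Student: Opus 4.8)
The plan is to verify directly that the distribution $u$ defined by the pairing
$$
(u,\phi)_p := \bigl(f,(G^\pm_p,\phi)_q\bigr)_p
$$
solves $\D u = f$ in the distributional sense of Definition \ref{clifforddiracdist}, i.e. that $(u,\D\phi)_p = (f,\phi)_p$ for every test spinor field $\phi \in \mathcal{D}(\ds_{Dirac})$. The essential ingredients are the defining property of the fundamental solution, $\D^q G^\pm_q(p) = \overline\delta_q(p)$, together with a symmetry (or reciprocity) relation between $G^+$ and $G^-$ that lets one exchange the roles of $p$ and $q$ inside the double bracket. First I would unwind the definitions: by Definition \ref{clifforddiracdist} applied in the variable $p$, $(u,\D\phi)_p$ is by construction $(f,(G^\pm_p,\D\phi)_q)_p$, so the problem reduces to showing that $(G^\pm_p,\D\phi)_q = \phi(p)$ as a function of $p$, i.e.\ that $\D^q$ acting on $G^\pm_p(q)$ in the $q$-variable reproduces the Dirac mass at $p$.

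The key step is therefore to relate the bracket $(G^\pm_p,\D\phi)_q$ — where $G^\pm_p(q) = \D^q \tilde G^\pm_p(q)$ has its singular support on $\mathcal C^\pm(p)$ in the $q$-variable — to the known identity $\D^q G^\pm_q(p) = \overline\delta_q(p)$, which is stated with the cone based at $q$. This requires the symmetry of the bidistributions $\tilde G^\pm$ under interchange of their two arguments (with $+$ and $-$ swapped), a standard feature of Friedlander's construction: the forward fundamental solution at $p$ evaluated at $q$ equals, up to the transpose induced by the bispinor $\tau$, the backward one at $q$ evaluated at $p$. Granting this, one moves the Dirac operator off $\phi$ and onto $G^\pm_p$ using Definition \ref{clifforddiracdist} once more (now in the $q$-variable), picking up no boundary terms because $\phi$ has compact support, and obtains $(\D^q G^\pm_p, \phi)_q = (\overline\delta_p,\phi)_q = \phi(p)$. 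Substituting back gives $(u,\D\phi)_p = (f,\phi(\cdot))_p = (f,\phi)_p$, which is exactly $\D u = f$.

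The main obstacle I anticipate is the bookkeeping of the two-variable structure: one must be careful that the bracket $(\cdot,\cdot)$ is computed via the symplectic product, that the spinor indices of $\tilde G^\pm_q(p) = \gu^\pm + \gd^\pm$ are contracted in the correct variable, and that the reciprocity relation is applied with the correct sign and the correct transpose $\tau(p,q)$ (cf. equation \eqref{tau}). A secondary subtlety, already flagged in Remark \ref{derivdistrib}, is that $\delta^\pm(\Gamma_p(q))$ and $H^\pm(\Gamma_p(q))$ do not literally satisfy Definition \ref{compdis} at the vertex of the cone, so strictly speaking the manipulations with $\D^q$ hitting $\tilde G^\pm_p$ should be justified by the regularisation $\Gamma_p(q) - \eps$ and a limit $\eps \to 0$; since $f$ is smooth and $\phi$ compactly supported, this limiting argument is routine. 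Once the reciprocity of the fundamental solution is taken for granted (it is part of the cited construction in \cite{Friedlander:1975vn}), the proof is a short formal computation, and indeed the statement is billed as a direct transcription of Lemma 5.5.1 there.
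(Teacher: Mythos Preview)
Your formal computation is the same as the paper's: apply the symmetry of $\D$ in the outer variable, plug in the definition of $u$, and use the fundamental-solution property in the inner bracket. However, you have introduced an unnecessary detour. You write the defining identity as ``$\D^q G^\pm_q(p)=\overline\delta_q(p)$'' and then argue that relating this to $(G^\pm_p,\D\phi)_q$ requires the reciprocity $G^+_p(q)\leftrightarrow G^-_q(p)$. But the property actually stated in the paper is $\D^p G^\pm_q(p)=\overline\delta_q(p)$: the Dirac operator acts in the \emph{argument} slot, not the subscript. A trivial relabelling $p\leftrightarrow q$ gives $\D^q G^\pm_p(q)=\overline\delta_p(q)$, which immediately yields $(G^\pm_p,\D\phi)_q=\phi(p)$. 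No reciprocity is needed, and in particular no swap of $+$ and $-$.

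A more substantive omission is that the bulk of the paper's proof is not the three-line formal calculation at all, but the justification that the expression defining $u$ makes sense and is continuous. Concretely, one must show that $p\mapsto (G^\pm_p,\D\phi)_q$ is smooth (so that the outer pairing with $f$ is defined), and then obtain a continuity estimate showing $u$ is a genuine distribution. The paper does this by writing $(\D^q\tilde G^\pm_p,\D\phi)_q$ as a sum of integrals over $\mathcal C^+(p)$ and $\mathcal J^+(p)$ via the explicit decomposition $\tilde G^\pm=\tilde U\delta^\pm(\Gamma)+\tilde V H^\pm(\Gamma)$, passing to normal coordinates centred at $p$ so that the domains of integration become fixed (independent of $p$), and then reading off smoothness in $p$ and a bound in terms of seminorms of $\phi$. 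You allude to this as ``routine'' but do not carry it out; in the paper it is the main content of the proof.
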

\proof: The calculation is made first formally. The justication of each step will be carried out later; it will be sufficient to check that each duality bracket is well-defined and that all the operations involved (symmetry on Dirac operator, \dots) are legitimate.\\
Let $\phi$ be in $\mathcal{D}(\ds_{Dirac})$.
\begin{eqnarray}
(\D^p u,\phi)_{p}&=&(u, \D^p \phi)_p\label{lemme5511}\\
&=&(f,(G^\pm_p,\D^q \phi)_q)_p \text{ by definition of $u$}\label{lemme5512}\\
&=&(f,\phi) \text{ by definition of $G^\pm_p$}\label{lemme5513}.
\end{eqnarray}

It must be checked to insure that \eqref{lemme5512} exists that the function:
\begin{equation*}
p\longmapsto (G^\pm_p,\D^q \phi)_q
\end{equation*} 
is smooth; we have:
\begin{equation}\label{55115}
(\D^q \tilde G^\pm_p, \D^q \phi)_q=(\tilde G^\pm_p, (\D^q)^2\phi)_q=\int_{\mathcal{C}^+(p)}(\tilde U^\pm_p(q),(\D^q)^2\phi )\mu_{\Gamma_p(q)}(q)+\int_{\mathcal{J}^+(p)}(\tilde V^\pm_p(q),(\D^q)^2\phi )\mu(q),
\end{equation} 
where $\mu_{\Gamma_{p}(q)}$ is the Leray form associated with the function $\Gamma_{p}(q)$, i.e:
$$
\mu_{\Gamma_{p}(q)}=\nabla^q\Gamma_{p}(q)\lrcorner \mu.
$$
 Let $\pi: \Omega \rightarrow \R^4$ be a chart recovering $\Omega$ (which exists since $\Omega$ is geodesically convex). The image by $\pi$ of $p$ and $q$  are respectively denoted by $y$ and $x$. There exists a diffeomophism $\xi\rightarrow x=h(y,\xi)$ from $\pi(\Omega)$ into $\R^4$, where $\xi=(\xi^0,\xi^1, \xi^2, \xi^3)$ is a coordinate system centered at $y$, Minkowskian in $q$ and such that the vector $(1,0,0,0)$ is timelike and future oriented. In this coordinate system, the measures $\mu$ and $\mu_{\Gamma_q(p)}$ are expressed as:
$$
\mu(q)=k(y,\xi)\ud \xi \text{ and } \mu_{\Gamma_q(p)}=k(y,\xi)\frac{\ud \xi^1 \wedge \ud \xi^2 \wedge \ud \xi^3}{2\sqrt{(\xi^1)^2+(\xi^2)^2+(\xi^3)}}
$$
with $\ud \xi=  \ud \xi^0\wedge\ud \xi^1 \wedge \ud \xi^2 \wedge \ud \xi^3$ and 
$$
C^+(q)=\big\{\xi| \xi^0=\sqrt{(\xi^1)^2+(\xi^2)^2+(\xi^3)^2}\big\} \text{ and } \mathcal{J}^+(q)=\big\{\xi| \xi^0\geq\sqrt{(\xi^1)^2+(\xi^2)^2+(\xi^3)^2}\big\}.
$$
The integral \eqref{55115} can then be rewritten:
\begin{gather}
(\D^q \tilde  G^\pm_p, \D^q \phi)_q=\int_{\xi^0=\sqrt{(\xi^1)^2+(\xi^2)^2+(\xi^3)^2}}(\tilde U^\pm_{h(y,\xi)}(y),\big((\D^q)^2\phi\big) (h(y,\xi)))k(y,\xi)\frac{\ud \xi^1 \wedge \ud \xi^2 \wedge \ud \xi^3}{2\sqrt{(\xi^1)^2+(\xi^2)^2+(\xi^3)}}\\
+\int_{\xi^0\geq \sqrt{(\xi^1)^2+(\xi^2)^2+(\xi^3)^2}}(\tilde V^\pm_{h(y,\xi)}(y),\big((\D^q)^2\phi\big) (h(y,\xi))k(y,\xi)\ud \xi.
\end{gather}
which is clearly a smooth function of $y=\pi(p)$.\\
Since $f$ is a distribution with compact support, there exists $K'$, an integer $N$ and a positive constant $C$ such that, for any smooth function $\psi$, the following estimate holds:
$$
|(f,\psi)|\leq C\sum_{|\alpha|<N}\sup_{y\in \pi(K')} ||\partial^\alpha_y \psi \circ \pi^{-1}(y)||.
$$ 
Let $K$ be a compact of $\Omega$. Assume that $\phi$ has its support in $K$. Then the previous inequality gives for $\psi =(G_p,\D \phi)_q$
$$
|(f,\psi)|\leq C'\sum_{|\alpha|<N}\sup_{y\in \pi(K')} ||\partial^\alpha_y (G_{\pi^{-1}(y)}(\pi^{-1}(x)),\phi)_x ||.
$$ 
Using the expression of $(G_{\pi^{-1}(y)}({\pi^{-1}(x)}),\phi)_x$, its derivatives $\partial^\alpha_y (G_{\pi^{-1}(y)}({\pi^{-1}(x)}),\phi)_x$ are bounded by the derivatives of $\phi$ on $K$:
$$
\sup_{K} ||\partial^\alpha_y (G_{\pi^{-1}(y)}(\pi^{-1}(x)),\phi)_x||\leq C_{K,K',\alpha}\sum_{|\beta\leq |\alpha|+1} \sup_{y\in \pi(K)} ||\partial^\beta_y \phi \circ \pi^{-1}(y)||
$$
where the constant $C_{K,K', \alpha}$ is determined only by the derivatives of $\tilde U$, $\tilde V$, $h$ of order up to $k+1$ on the compact $K\times K'$ and its image by $\pi$.
Finally, we obtain:
$$
|(f,\psi)|\leq \sum_{|\alpha|<N+1}\sup_{y\in \pi(K')} ||\partial^\alpha_y( \phi\circ\pi^{-1})  ||,
$$
which means that equation \eqref{lemme5512} is well-defined.\fin

Let $u$ be in $\mathcal{E}(\ds_{Dirac})$. The following proposition gives a representation of $u$ in term of its data on a null cone:
\begin{theorem}\label{representationformula} Let $u$ be a function with future bounded support.
Let $p_0$ in $\Omega$. Then, we have in the distributional sense:
\begin{gather*}
uH^+(\Gamma_{0})=
\D^q\left(\left(\int_{\mathcal{S}(q)} \big(\D^pu,\tilde U_{p}(q)\big) \mu_{\Gamma_{q}}(p)+\int_{\mathcal{V}(q)} \big(\D^pu,\tilde V_{p}(q)\big) \mu_{\Gamma_{q}}(p)\right.\right.\\
\left.\left.+\int_{\sigma(q)} \big(\nabla^p \Gamma_{0}\cdot u,\tilde U_{p}(q)\big) \mu_{\Gamma_{0},\Gamma_{q}}(p)+\int_{\mathcal{D}(q)} \big(\nabla^p \Gamma_{0}\cdot u,\tilde V_{p}(q)\big)  \mu_{\Gamma_{0}}(p)\right)H^+(\Gamma_{0})\right),
\end{gather*}
where the two-form  $\mu_{\Gamma_{0},\Gamma_{q}}(p)$ is obtained via the factorization:
\begin{equation*}
\forall \phi \in C^{\infty}_0(\Omega\times \Omega), \int_{A}\phi \mu_{\Gamma_0}(p)\wedge \mu_{\Gamma}(q)=\int_{\mathcal{J}^{+}(p_0)}\int_{\sigma(q)}\phi\mu_{\Gamma_0,\Gamma_q}(p)\wedge \mu(q),
\end{equation*}
where $A$ is the set $\{(p,q)|p\in \mathcal{C}^+(p_{0}) \text{ and } q \in \mathcal{C}^+(p) \}$. 
\end{theorem}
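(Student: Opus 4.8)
The plan is to start from the fundamental-solution identity $\D^p G^\pm_q(p)=\overline{\delta}_q(p)$ and pair it against the section $u$, using the (formal) symmetry of the Dirac operator for the symplectic product established in Lemma~\ref{lemprau} and Definition~\ref{clifforddiracdist}. Concretely, I would test the desired right-hand side against an arbitrary $\phi\in\mathcal{D}(\ds_{Dirac})$ and run the computation in the variable $q$ first: moving the $\D^q$ onto $\phi$ (this is where the divergence term from Lemma~\ref{lemprau} will appear, and one must check it integrates to zero because $u$ has future bounded support and $\phi$ has compact support), and then recognizing that $\D^q\tilde G_p(q)=G_p(q)$ is, in the $p$-variable, the forward fundamental solution of $\D$ with pole at $q$. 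The outcome should be that, up to boundary contributions, the whole expression collapses to $(uH^+(\Gamma_0),\phi)$ by the reproducing property $(\,\overline{\delta}_p,\phi\,)=\phi(p)$ of Lemma~\ref{lemme551}.

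The key technical point is the integration by parts over the lens-shaped region $\mathcal{V}(q)=\mathcal{J}^+(p_0)\cap\mathcal{J}^-(q)$. I would apply Proposition~\ref{derivation} to the distribution $uH^+(\Gamma_0)$: since $H^+(\Gamma_0)$ is the Heaviside composed with the squared-distance to $p_0$, one has $\D(uH^+(\Gamma_0))=H^+(\Gamma_0)\D u+\delta^+(\Gamma_0)\,\hat\nabla\Gamma_0\cdot u$, which is exactly the mechanism that produces the two surface integrals over $\sigma(q)$ and $\mathcal{D}(q)$ carrying the Clifford product $\nabla^p\Gamma_0\cdot u$. So the strategy is: write $uH^+(\Gamma_0)=\D^p\big(\tilde G_{\,\cdot}\,\text{-smeared data}\big)$ by first solving the inhomogeneous problem $\D^p w=\D^p(uH^+(\Gamma_0))$ via Lemma~\ref{lemme551} (with source $f=H^+(\Gamma_0)\D u+\delta^+(\Gamma_0)\hat\nabla\Gamma_0\cdot u$), then unfold the pairing $(f,(\tilde G^+_p,\phi)_q)$ into a volume integral over $\mathcal{V}(q)$ plus surface integrals over $\mathcal{S}(q)$, $\sigma(q)$, $\mathcal{D}(q)$ according to whether the $\delta^+$ or $H^+$ part of $\tilde G$ meets the $\delta^+(\Gamma_0)$ or $H^+(\Gamma_0)$ part of the source. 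The factorization formula for $\mu_{\Gamma_0,\Gamma_q}(p)$ in the statement is precisely what is needed to rewrite the iterated integral $\int_{\mathcal{J}^+(p_0)}\int_{\sigma(q)}(\cdots)$ coming from the $\delta^+(\Gamma_0)$-times-$\delta^+(\Gamma_q)$ term as a single integral against the coarea-type measure on the codimension-two surface $\sigma(q)$.

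The bookkeeping of which cone bounds which integral is the part most prone to sign and orientation errors, but the genuinely delicate step — and the one I expect to be the main obstacle — is justifying that all the formal manipulations (the symmetry of $\D$, the two successive applications of the bidistribution $\tilde G$, the interchange of the $\D^q$ with the $p$-integrals, and the vanishing of the divergence boundary terms at the ``lateral'' parts of the region) are legitimate. This is exactly the kind of justification deferred in the proof of Lemma~\ref{lemme551}: one needs the smoothness and support estimates for $p\mapsto(\tilde G^\pm_p,\psi)_q$ of the type proved there, applied now with $\psi$ itself of the form $(\tilde G,\phi)$, together with the fact that $\Omega$ is geodesically convex so the relevant cones and their intersections $\sigma,\mathcal{D},\mathcal{S},\mathcal{V}$ are well-defined compact (in the support of $\phi$) pieces with no caustics. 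Once that is in place the identity follows by matching the four terms; I would present the formal computation first, exactly as in Lemma~\ref{lemme551}, and then discharge the analytic justifications by the same compact-support and chart arguments used there, using Remark~\ref{derivdistrib} to handle the vanishing of $\nabla\Gamma_0$ at the vertex $p_0$ via the $\eps$-regularization.
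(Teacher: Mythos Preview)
Your proposal is correct and follows essentially the same route as the paper: apply Lemma~\ref{lemme551} with source $f=\D^p(uH^+(\Gamma_0))$, expand this source via Proposition~\ref{derivation} (with the $\eps$-regularization of Remark~\ref{derivdistrib} to handle the vertex), decompose $(f,(\tilde G_p,\D^q\phi)_q)_p$ into the four integrals according to which factor of $\tilde G$ (the $\delta^+$ or $H^+$ piece) meets which factor of $f$, and then switch the order of the $p$- and $q$-integrations to recognize the result as $(\text{integrals}(q),\D^q\phi)_q$.

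One small correction: in this theorem there is no genuine interchange of $\D^q$ with the $p$-integrals, and no lateral boundary terms to kill. The $\D^q$ is moved onto $\phi$ purely at the distributional level (Definition~\ref{clifforddiracdist}), and at the end one simply reads $(\,\cdot\,,\D^q\phi)_q$ as $(\D^q(\,\cdot\,),\phi)_q$ by that same definition; the divergence term of Lemma~\ref{lemprau} is already absorbed there because $\phi$ has compact support. The actual commutation of $\D^q$ through the surface integrals is the content of the \emph{later} Propositions~\ref{diracderivation1} and~\ref{diracderivation2}, not of this theorem. Apart from this, your outline and the paper's proof coincide.
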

\proof
Let $u$ be a function with future bounded support, that is to say that the intersection of $supp(u)$ with any future null cone is compact . 
We use here the property of the fundamental solution with lemma \ref{lemme551} with $f=\D^p(uH^\pm(\Gamma_0))$: 
\begin{eqnarray}
(uH^+(\Gamma_0),\phi)_{p}&=&\left( \D^p\left(uH^+(\Gamma_0)\right), \left(G_{p},\phi\right)_{q}\right)_{p}\label{justif2}\\
&=&\left( \D^p\left(uH^+(\Gamma_0)\right), \left(\D^q\tilde G_{p},\phi\right)_{q}\right)_{p}\nonumber\\
&=&\left( \D^p\left(uH^+(\Gamma_0)\right), \left(\tilde G_{p},\D^q\phi\right)_{q}\right)_{p}\label{justif1}
\end{eqnarray}
The duality bracket \eqref{justif1} is properly defined since the function $p\mapsto (\tilde G_{p},\D^q\phi)_{q}$ is a smooth function with support in the future of $\text{Supp}(\phi)$, that is to say $\cup_{q\in \text{Supp}(\phi)}\mathcal{I}^+(q)$, and since $u$ has future bounded support.

The duality bracket \eqref{justif1} is then developed. The first step consists in differentiating the distributions $uH^+(\Gamma_{0})$. As already noticed in remark \ref{derivdistrib}, the distribution $uH^+(\Gamma_{0})$ is not of the type given in proposition \ref{derivation} since $\nabla \Gamma_{0}$ vanishes at $p_{0}$. To avoid this difficulty, we consider the distributions $u H^+(\Gamma_{0}-\eps)$, where $\eps$ is a positive number. This derivation gives, since proposition \ref{derivation} can be applied:
$$
\D\left(uH^+(\Gamma_{0}-\eps)\right)=(\D u) H^+(\Gamma_{0}-\eps)+\hat \nabla \Gamma_{0} \cdot u \delta^+(\Gamma_{0}-\eps)
$$   
which becomes, when $\eps$ tends to zero:
$$
\D\left(uH^+(\Gamma_{0})\right)=(\D u) H^+(\Gamma_{0})+\hat \nabla \Gamma_{0} \cdot u \delta^+(\Gamma_{0}).
$$
The bracket \eqref{justif1} is written as the sum of four integrals:
\begin{eqnarray*}
\left( \D^p\left(uH^+(\Gamma_0)\right), \left(\tilde G_{p},\D^q\phi\right)_{q}\right)_{p}&=&\int_{\mathcal{J}^+(p_{0})}\int_{\mathcal{C}^+(p)} (\D^pu,(\tilde U_{p}(q), \D^q\phi)) \mu_{\Gamma_{p}}(q)\wedge \mu(p) \\
&+&\int_{\mathcal{J}^+(p_{0})}\int_{\mathcal{J}^+(p)} (\D^pu,(\tilde V_{p}(q), \D^q\phi)) \mu(q)\wedge \mu(p)\\
&+&\int_{\mathcal{C}^+(p_{0})}\int_{\mathcal{C}^+(p)} (\nabla^p \Gamma_{0}\cdot u,(\tilde U_{p}(q), \D^q\phi)) \mu_{\Gamma_{p}}(q)\wedge \mu_{\Gamma_{0}}(p)\\
&+&\int_{\mathcal{C}^+(p_{0})}\int_{\mathcal{J}^+(p)} (\nabla^p \Gamma_{0}\cdot u,(\tilde V_{p}(q), \D^q\phi)) \mu(q)\wedge \mu_{\Gamma_{0}}(p),
\end{eqnarray*}
where $\mu_{\Gamma_{0}}$ and $\mu_{\Gamma_{p}}$ are the Leray measures associated with $\Gamma_{0}$ and $\Gamma_{p}$ respectively.

Switching the order of integration of the variables, we get:
\begin{eqnarray*}
\left( \D^p\left(uH^+(\Gamma_0)\right), \left(\tilde G_{p},\D^q\phi\right)_{q}\right)_{p}&=&\int_{\mathcal{J}^+(p_{0})}\int_{\mathcal{C}^-(q)\cap \mathcal{J}^+(p_{0})} \big((\D^pu,\tilde U_{p}(q)), \D^q\phi\big) \mu_{\Gamma_{q}}(p)\wedge \mu(q) \\
&+&\int_{\mathcal{J}^+(p_{0})}\int_{\mathcal{J}^-(q)\cap \mathcal{J}^+(p_{0})} \big((\D^pu,\tilde V_{p}(q)), \D^q\phi\big) \mu(p)\wedge \mu(q)\\
&+&\int_{\mathcal{J}^+(p_{0})}\int_{\mathcal{C}^-(q)\cap\mathcal{
C}^+(p_{0})} \big(\nabla^p \Gamma_{0}\cdot u,\tilde U_{p}(q)), \D^q\phi\big) \mu_{\Gamma_{0},\Gamma_{q}}(p)\wedge \mu(q)\\
&+&\int_{\mathcal{J}^+(p_{0})}\int_{\mathcal{J}^-(q)\cap\mathcal{C}^+(p_{0})} \big((\nabla^p \Gamma_{0}\cdot u,\tilde V_{p}(q)), \D^q\phi\big)  \mu_{\Gamma_{0}}(p)\wedge \mu(q).
\end{eqnarray*}

Finally, the duality bracket \eqref{justif2} is:
\begin{gather*}
(uH^+(\Gamma_{0}),\phi)=
\left(\left(\int_{\mathcal{S}(q)} \big(\D^pu,\tilde U_{p}(q)\big) \mu_{\Gamma_{q}}(p)+\int_{\mathcal{V}(q)} \big(\D^pu,\tilde V_{p}(q)\big) \mu_{\Gamma_{q}}(p)\right.\right.\\
\left.\left.+\int_{\sigma(q)} \big(\nabla^p \Gamma_{0}\cdot u,\tilde U_{p}(q)\big) \mu_{\Gamma_{0},\Gamma_{q}}(p)+\int_{\mathcal{D}(q)} \big(\nabla^p \Gamma_{0}\cdot u,\tilde V_{p}(q)\big)  \mu_{\Gamma_{0}}(p)\right)H^+(\Gamma_{0}), \D^q\phi\right)_{q}
\end{gather*}
which means that, in the sense of distributions, $u$ satisfies, using the symmetry of the operator $\D^q$:
\begin{gather*}
uH^+(\Gamma_{0})=
\D^q\left(\left(\int_{\mathcal{S}(q)} \big(\D^pu,\tilde U_{p}(q)\big) \mu_{\Gamma_{q}}(p)+\int_{\mathcal{V}(q)} \big(\D^pu,\tilde V_{p}(q)\big) \mu_{\Gamma_{q}}(p)\right.\right.\\
\left.\left.+\int_{\sigma(q)} \big(\nabla^p \Gamma_{0}\cdot u,\tilde U_{p}(q)\big) \mu_{\Gamma_{0},\Gamma_{q}}(p)+\int_{\mathcal{D}(q)} \big(\nabla^p \Gamma_{0}\cdot u,\tilde V_{p}(q)\big)  \mu_{\Gamma_{0}}(p)\right)H^+(\Gamma_{0})\right).\text{\fin}
\end{gather*}

A direct application of the previous theorem is the first integral formula for the characterictic Cauchy problem:
\begin{proposition}\label{repintegral}
Let $u$ be a smooth solution of:
\begin{equation*}
\D u=0
\end{equation*}
Then $u$ can be expressed in $\mathcal{J}^+(p_{0})$ in function of its restriction to the cone $\mathcal{C}^+(p_{0})$ by:
\begin{gather*}
u|_{\mathcal{J}^+(p_0)}=
\D^q\left(\left(\int_{\sigma(q)} \big(\nabla^p \Gamma_{0}\cdot u,\tilde U_{p}(q)\big) \mu_{\Gamma_{0},\Gamma_{q}}(p)+\int_{\mathcal{D}(q)} \big(\nabla^p \Gamma_{0}\cdot u,\tilde V_{p}(q)\big)  \mu_{\Gamma_{0}}(p)\right)H^+(\Gamma_{0})\right).
\end{gather*}
\end{proposition}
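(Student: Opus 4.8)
The statement is announced as a direct application of Theorem~\ref{representationformula}, and the plan is to make that reduction precise. The only real mismatch with that theorem is the hypothesis: it is stated for $u$ with future bounded support, whereas here $u$ is merely a smooth solution. Since the assertion is local in $\mathcal{J}^+(p_0)$ — testing the desired identity against $\phi\in\mathcal{D}(\ds_{Dirac})$ supported in $\mathcal{J}^+(p_0)$ only involves the values of $u$ on the compact ``diamond'' $\mathcal{J}^+(p_0)\cap\bigl(\bigcup_{q\in\mathrm{Supp}\,\phi}\mathcal{J}^-(q)\bigr)$ — I would first replace $u$ by $\chi u$, where $\chi$ is a smooth cutoff equal to $1$ on a neighbourhood of that compact set. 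Then $\chi u$ has compact, hence future bounded, support, so Theorem~\ref{representationformula} applies to it; moreover, on the region that matters one has $\D(\chi u)=\hat\nabla\chi\cdot u+\chi\,\D u=0$, so the four integrals produced by the theorem for $\chi u$ are exactly the ones one would write (formally) from $u$ itself, with the crucial feature that $\D^p(\chi u)$ vanishes at every point entering the first two of them.

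Consequently the first two integrals in the formula of Theorem~\ref{representationformula}, namely the ones over $\mathcal{S}(q)$ and $\mathcal{V}(q)$ whose integrands contain $\D^p u$, drop out, and there remain only the two integrals over $\sigma(q)=\mathcal{C}^+(p_0)\cap\mathcal{C}^-(q)$ and $\mathcal{D}(q)=\mathcal{C}^+(p_0)\cap\mathcal{J}^-(q)$. Both of these sets are contained in $\mathcal{C}^+(p_0)$; furthermore the Leray forms $\mu_{\Gamma_0,\Gamma_q}$ and $\mu_{\Gamma_0}$ and the coefficients $\tilde U$, $\tilde V$ are purely geometric, and $\nabla^p\Gamma_0\cdot u$ at a point $p\in\mathcal{C}^+(p_0)$ is the Clifford action of the known null generator $\nabla^p\Gamma_0$ on the value $u(p)$ alone, no derivative of $u$ occurring. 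Hence the right-hand side depends on $u$ only through the restriction $u|_{\mathcal{C}^+(p_0)}$, as required.

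Finally I would restrict the resulting distributional identity to the open set $\mathcal{J}^+(p_0)$, on which $\Gamma_0>0$ and therefore $H^+(\Gamma_0)\equiv1$, so that the left-hand side $uH^+(\Gamma_0)$ becomes $u|_{\mathcal{J}^+(p_0)}$; this is precisely the stated formula. All the analytic work — smoothness of the integrands, convergence of the duality brackets, and the legitimacy of differentiating the conic distributions $\delta^+(\Gamma_0)$, $H^+(\Gamma_0)$ via the $\eps$-regularisation — has already been done in Lemma~\ref{lemme551} and in the proof of Theorem~\ref{representationformula}. The only place demanding genuine attention here is the cutoff bookkeeping of the first paragraph, i.e.\ checking that the localisation $u\rightsquigarrow\chi u$ changes neither side of the identity on $\mathcal{J}^+(p_0)$ while turning the two $\D^p u$ terms into honest zeros; that is the single step that is not entirely routine.
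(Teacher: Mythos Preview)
Your proposal is correct and takes essentially the same approach as the paper, which simply presents the proposition as ``a direct application of the previous theorem'' (Theorem~\ref{representationformula}) without further comment. You have in fact been more careful than the paper: the cutoff argument you give to reconcile the future-bounded-support hypothesis of Theorem~\ref{representationformula} with the merely-smooth hypothesis here is a detail the paper leaves entirely implicit, and your verification that $\D(\chi u)=0$ on the relevant diamond (so that the $\mathcal{S}(q)$ and $\mathcal{V}(q)$ integrals genuinely vanish) is exactly the bookkeeping needed to make the ``direct application'' rigorous.
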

\begin{remark}\begin{enumerate}
\item This formula is not the final stage of our calculation; the fact that it only depends on initial conditions will be stated later. This is the purpose of the next subsection.
\item The vector $\nabla \Gamma_0$ being null along the cone $\mathcal{C}^+(p_0)$, Clifford multiplying with $\hat\nabla \Gamma_0$  means in fact contracting with the spinor form of $\nabla \Gamma_0$; a direct consequence of this is the fact the Clifford product of the 4-components Dirac spinors with $\nabla \Gamma_0$ does only involve the two components, $u_{0}$ and $u^{1'}$. The two remaining components are recovered using the constraints equations (cf. lemma \ref{scalarrelation} below).
\end{enumerate}
\end{remark}

\subsection{Integral formula}

The integral formula is derived in three steps:
\begin{itemize}
\item construction of the appropriate geometric tools (derivation of measures, spin basis);
\item interversion of the integral and the Dirac operator;
\item and finally obtention of  an expression of the singular part in terms of geometric quantities and initial data.
\end{itemize}

\subsubsection{Geometric data on the cone}\label{geometricdataonthecone}

This section is devoted to the calculation of the relevent geometric quantities for the intersection of $\mathcal{C}^+(p_0)\cap\mathcal{C}^-(q)=\sigma(q)$ for a given point $q$ in the future of $p_0$. This is widely inspired by section 4.14 of \cite{Penrose:1986fk}. There are also some calculations of interest in the work of Frittelli, Newman and al (\cite{MR1674227}, for instance) and  Nurowski -- Robinson(\cite{nr00}).  This kind of calculation is also very common in the study of Ricci flows.

We first choose a parallely transported vector field $l$ along the null cone $\mathcal{C}^+(p_0)$:
\begin{equation*}
\nabla_l l=0. 
\end{equation*}

We then consider, for a given point $q$ in $\mathcal{J}^+(p_{0})$, a point $p$ in $\sigma(q)$.  We construct at $p$ a Newman-Penrose tetrad:
\begin{enumerate}
\item the first null vector is the vector $l(p)$ at $p$;
\item $n(p)$ is chosen on the future oriented null geodesic from $p$ to $q$ such that $g(l,n)=1$;
\item we complete the basis by taking a pair of complex null vectors $m(p)$ and $\overline{m}(p)$ in the orthogonal of the vector space generated by $(l,n)$ such that $g(m,\overline{m})=-1$.
\end{enumerate} 
A Newman-Penrose tetrad is then obtained at each point $q'$ on the cone $\mathcal{C}^-(q)$: let $p'$ be the point in $\sigma(q)$ lying on the unique null geodesic from $q'$ to $q$; the Newman-Penrose tetrad is obtained in $q'$ by parallely transporting the one at $p'$ along the unique null geodesic from $p'$ to $q'$.

\begin{remark}\label{geometricobstructions}
This construction cannot be realized globally on the intersection $\mathcal{C}^+(p_{0})\cap \mathcal{C}^-(q)=\sigma(q)$ which has the topology of $\mathbb{S}^2$. It will be necessary to make this construction on two different open sets and then glue these constructions together to obtain the result which only depends on $l$ and $n$. We assume then that the construction is done on one open set.
\end{remark}

This choice of Newman Penrose tetrad gives us:
\begin{itemize}
\item a basis of $T \Omega\otimes \C$ and, consequently, up to a sign, a spin basis of $\ds^A$ that will be denoted by $(o^A, \iota^A)$;
\item if $q$ is fixed first and $p$ is chosen on $\sigma(q)$, the vectors $m$ and $\overline {m}$ span the tangent plane to $\sigma(q)$ at $p$:
$$
T_p\sigma(q)=\{\lambda \overline{m}+\overline{\lambda} m | \lambda \in \C \};
$$
due to obvious topological obstructions (see remark \ref{geometricobstructions}), this construction cannot be extended globally to all $\sigma(q)$.
\item the choice of $l$, which is parallely transported along the generators of $\mathcal{C}^+(p_0)$, and $n$, which is parallely transported along the generators of $\mathcal{C}^-(q)$, gives rise to two affine parameters $r_0$ and $r$ along the null geodesics on these two cones.
\item these two affine parameters give rise to two parametrizations by the sphere $S^2$ of $\sigma(q)$ using the exponential map at $p_0$ and $p$ respectively:
\begin{equation*}
\begin{array}{lcccl}
\exp_{p_{0}}: &S^2 &\longrightarrow& \Omega\\ 
&\omega& \longmapsto & \exp_{p_{0}}(r_{0}(\omega)\omega)
\end{array}
\end{equation*}
and
\begin{equation*}
\begin{array}{lcccl}
\exp_{p}: &S^2 &\longrightarrow& \Omega\\ 
&\omega& \longmapsto & \exp_{p}(r(\omega)\omega)
\end{array}
\end{equation*}
\end{itemize}

Let $q$ be a point fixed in $\mathcal{J}^+(p_0)$. We consider a point $p$ on $\sigma(q)$. In a neighborhood of $p$, on $\sigma(q)$, is defined a Newman-Penrose tetrad $(l,n,m,\overline{m})$. The dual basis in $T\Omega^\ast\otimes \C$ is denoted by $(L,N,M,\overline{M})$ for which the following lemmata are true:
\begin{lemma}\label{meancurvature}
The induced metric on $\sigma(p)$ is $-2M \overline{M}$, the volume form $\frac{1}{2i} M\wedge \overline{M}$ and the mean curvature vector:
\begin{equation*}
H=2(\rho' l+\rho n)
\end{equation*}
where $\rho$ and $\rho'$ are the real spin coefficients:
$$
\rho= -(l,\nabla_{\overline{m}}m) \text{ and } \rho'= -(n,\nabla_{m}\overline{m})
$$
\end{lemma}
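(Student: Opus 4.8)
The plan is to work in the Newman--Penrose frame $(l,n,m,\overline m)$ constructed above, restricted to a neighbourhood of the chosen point $p$ on $\sigma(q)$, and to compute the three objects --- induced metric, volume form, and mean curvature vector --- directly from the dual coframe $(L,N,M,\overline M)$. First I would record the splitting $T_p\Omega\otimes\C = \mathrm{span}(l,n)\oplus\mathrm{span}(m,\overline m)$ and observe that $\mathrm{span}(m,\overline m)$ is precisely $T_p\sigma(q)\otimes\C$ (this was already noted in the bullet list before the lemma). Dualising, the pullback of the metric $g = L\odot N + N\odot L - M\odot\overline M - \overline M\odot M$ (written in the null coframe using the normalisations $g(l,n)=1$, $g(m,\overline m)=-1$) to $\sigma(q)$ kills every term containing $L$ or $N$, leaving $-M\overline M - \overline M M = -2M\overline M$ in the symmetric-product convention. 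For the volume form: the induced metric, being a negative-definite Riemannian metric on a real $2$-surface, has Riemannian area element obtained from the real and imaginary parts of $M$; writing $M = \tfrac{1}{\sqrt2}(E^2 + iE^3)$ in terms of a real orthonormal coframe one gets area element $E^2\wedge E^3 = \tfrac{1}{2i}M\wedge\overline M$, which is the claimed expression.

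The substantive part is the mean curvature vector. I would use the standard formula $H = \sum_a \mathfrak{g}^{ab}\,\mathrm{II}(e_a,e_b)$ where $\{e_a\}$ runs over a basis of $T_p\sigma(q)$ and $\mathrm{II}(X,Y) = (\nabla_X Y)^\perp$ is the second fundamental form valued in the normal bundle. Since $(m,\overline m)$ spans the tangent space with $g(m,\overline m) = -1$ and $g(m,m) = g(\overline m,\overline m)=0$, the induced ``metric coefficients'' have inverse such that the trace of $\mathrm{II}$ is $H = -\big(\mathrm{II}(m,\overline m) + \mathrm{II}(\overline m,m)\big)$. The normal space at $p$ is $\mathrm{span}(l,n)$, and since $\{l,n\}$ is a null pair with $g(l,n)=1$, the normal projection of a vector $Z$ is $g(Z,n)\,l + g(Z,l)\,n$. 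Hence I would compute $g(\nabla_m\overline m, n)$ and $g(\nabla_m\overline m, l)$, recognise these as (minus) the spin coefficients $\rho$ and $\rho'$ up to the conventions fixed in the statement --- explicitly $\rho = -(l,\nabla_{\overline m} m)$ and $\rho' = -(n,\nabla_m\overline m)$, together with the reality of $\rho,\rho'$ which follows because $\sigma(q)$ is a spacelike $2$-surface (the shear-free-type identities $g(\nabla_m m, n) = \sigma$, etc., and the fact that $\nabla_{\overline m}m$ and $\nabla_m\overline m$ have conjugate components by reality of $g$). Assembling, $\mathrm{II}(m,\overline m) + \mathrm{II}(\overline m,m) = -2\rho'\,l - 2\rho\,n$, so $H = 2(\rho' l + \rho n)$.

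The key technical points to get right are: (i) keeping the sign conventions consistent between the symmetric tensor product in the metric and the wedge product in the volume form; (ii) correctly identifying which spin coefficients appear --- the potential pitfall is a swap of $\rho\leftrightarrow\rho'$ or of $l\leftrightarrow n$, which I would pin down by checking against the flat-space light cone (where $\rho = -1/r_0$ along the generators of $\mathcal C^+(p_0)$ and $\rho' = -1/r$ along those of $\mathcal C^-(q)$, and $H$ must point ``inward'' along both rulings); and (iii) justifying that $\rho$ and $\rho'$ are real, which rests on $\sigma(q)$ being a genuine spacelike cut of two null cones so that the $2$-surface twist vanishes. Most of this is routine Newman--Penrose bookkeeping following section 4.14 of \cite{Penrose:1986fk}; the one place where care is genuinely needed is the normal projection formula in a null normal frame, since the ``raising of indices'' in the normal bundle is non-diagonal. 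I would therefore present that projection step explicitly and let the rest be a short computation.
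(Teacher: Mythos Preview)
Your proposal is correct and follows essentially the same route as the paper: both compute $H=-(\mathrm{II}(m,\overline m)+\mathrm{II}(\overline m,m))$ via the null normal projection $Z^\perp=g(Z,n)l+g(Z,l)n$, identify the resulting inner products with the spin coefficients $\rho,\rho'$ (and their conjugates), and then invoke the reality of $\rho,\rho'$ on a spacelike $2$-surface. The only cosmetic difference is that the paper quotes the ready-made expansions of $\delta' m$ and $\delta\overline m$ from Penrose--Rindler (4.5.28)--(4.5.29) rather than recomputing the four inner products, and cites proposition 4.14.2 there for the reality statement and for the induced metric and area form which you work out by hand.
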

\proof: These results are straightforward consequences of the presentation concerning two-surfaces in \cite{Penrose:1986fk} (section 4.14, proposition 4.14.2 sqq.)\\
The reality of the spin coefficients is stated in proposition (4.14.2) of \cite{Penrose:1986fk}, whenever $l$ and $n$ are orthogonal to a spacelike 2-surface (here $\sigma(q)$).\\
Since $(m, \overline{m})$ span $T\sigma(q)$, the second fundamental form is:
\begin{equation*}
\forall (X,Y)\in T\sigma(q), II(X,Y)=(\nabla_X Y, n)l+(\nabla_X Y,l)n
\end{equation*}
so that the mean curvature vector is:
\begin{eqnarray*}
H&=&-(II(m, \overline{m})+II(\overline{m},m))\\
&=&-\left(((\delta'm,n)+(\delta \overline m,n))l+((\delta'm,l)+(\delta \overline m,l))n\right)
\end{eqnarray*}
Since (see \cite{Penrose:1986fk} (4.5.28) together with (4.5.29)):
\begin{eqnarray*}
\delta' m&=&(\beta-\overline{\alpha})m-\overline{\rho}'l-\rho n\\
\delta \overline{m}&=&(\overline{\alpha}-\beta)m-\rho'l-\overline{\rho} n
\end{eqnarray*}
and since $\rho$ and $\rho'$ are real, we obtain:
\begin{equation*}
H=2(\rho'l+\rho n).\text{\fin}
\end{equation*}

In order to compute the Leray forms associated with the distance function, we use the expressions of the gradients of the distance functions $\Gamma_0$ and $\Gamma_q$:
\begin{equation}
\nabla^p\Gamma_0(p)=2r_0 l(p) \text{ and } \nabla^p \Gamma_q(p)=2r n(p)
\end{equation}
The Leray forms can then be expressed using the dual basis of the chosen Newman-Penrose basis.
\begin{proposition}\label{expmeasures}
The Leray forms $\mu_{\Gamma_0}$, $\mu_{\Gamma_q}$ and $\mu_{\Gamma_0,\Gamma}$ are:
\begin{equation*}
\begin{array}{lcl}
\mu_{\Gamma_0}&=&\displaystyle{\frac{1}{2ir_0}N\wedge M\wedge \overline{M}}\\
\mu_{\Gamma_q}&=&\displaystyle{\frac{1}{2ir}L\wedge M\wedge \overline{M}}\\
\mu_{\Gamma_0,\Gamma}&=&\displaystyle{\frac{1}{4ir_0r}M\wedge \overline{M}}=\displaystyle{\frac{1}{4r_0r}\mu_{\sigma(p)}}
\end{array}
\end{equation*}
where $\nabla^p \Gamma_0(p)=2r_0 l(p)$ and $\nabla^p \Gamma_q(p)=2r n(p)$
\end{proposition}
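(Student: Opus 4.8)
\proof: The plan is to evaluate the three forms pointwise in the Newman--Penrose coframe $(L,N,M,\overline M)$, relying on two facts already available: by Definition \ref{compdis} the Leray form of a function $S$ with non-vanishing gradient is the contraction $\nabla S\lrcorner\mu$ of the metric volume form, and the gradients of the distance functions are $\nabla^p\Gamma_0(p)=2r_0\,l(p)$ and $\nabla^p\Gamma_q(p)=2r\,n(p)$, as recalled just above.

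First I would write $\mu$ in the coframe. Since $(l,n,m,\overline m)$ is built from an orthonormal frame $(e_{\mathbf a})$ through the change of basis of Section \ref{sectionspinor}, substituting the metric duals $e^0,\dots,e^3$ in terms of $L,N,M,\overline M$ into $\mu=e^0\wedge e^1\wedge e^2\wedge e^3$ reduces, after a routine multilinear computation, to $\mu = c\,L\wedge N\wedge M\wedge\overline M$ with $c$ the orientation constant. Because $(L,N,M,\overline M)$ is dual to $(l,n,m,\overline m)$ one then reads off $l\lrcorner\mu = c\,N\wedge M\wedge\overline M$ and $n\lrcorner\mu = -c\,L\wedge M\wedge\overline M$; multiplying by the scalars $2r_0$ and $2r$ coming from the gradient formulas gives $\mu_{\Gamma_0}$ on $\mathcal C^+(p_0)$ and $\mu_{\Gamma_q}$ on $\mathcal C^-(q)$.

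The remaining, and principal, point is $\mu_{\Gamma_0,\Gamma}$, which is not a single contraction but the iterated Leray form fixed by the factorization identity in Theorem \ref{representationformula}. I would produce it by iterating the Leray construction inside $\mathcal C^+(p_0)$: foliating $\mathcal C^+(p_0)$ by the spheres $\sigma(q')$ and stripping the generator direction from $\mu_{\Gamma_0}$ amounts to contracting once more with $\nabla^p\Gamma_q=2r\,n(p)$, equivalently to computing the Jacobian between coordinates $(r_0,\omega)$ on $\mathcal C^+(p_0)$ adapted to the double null foliation and the pair $\bigl(q,\text{point of }\sigma(q)\bigr)$. The double contraction $n\lrcorner(l\lrcorner\mu)=c\,M\wedge\overline M$ then yields a $2$-form on $\sigma(q)$ proportional to $M\wedge\overline M$; matching normalizations with the coarea identity of Theorem \ref{representationformula} produces the prefactor $\tfrac{1}{4ir_0r}$, and Lemma \ref{meancurvature}, which identifies $\tfrac{1}{2i}M\wedge\overline M$ with the area form $\mu_{\sigma(p)}$, gives the last equality.

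The hard part will be the bookkeeping of orientation and normalization constants, carried consistently across $\mu$, the two single-contraction forms, and $\mu_{\sigma(p)}$: this is where the Jacobian of the double null foliation and the precise link between the affine parameters $r_0,r$ and the values of $\Gamma_0,\Gamma_q$ enter, and a careless choice there changes the powers of $r_0$ and $r$. A secondary point: by Remark \ref{geometricobstructions} the tetrad exists only on a patch of $\sigma(q)\cong\mathbb S^2$, but each of the three forms is globally defined and, being built from the induced metric, the orientation, and the global fields $l,n,\Gamma_0,\Gamma_q$, is independent of the auxiliary choice of $m,\overline m$, so the patchwise computation glues to the global statement.
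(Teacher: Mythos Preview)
Your plan is the paper's plan: write $\mu$ in the Newman--Penrose coframe, peel off the two single Leray forms, and obtain $\mu_{\Gamma_0,\Gamma}$ from the Fubini factorization of Theorem~\ref{representationformula} together with Lemma~\ref{meancurvature}. So the strategy is right.

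There is, however, one concrete slip that is exactly the ``bookkeeping'' you flag, and it is worth naming because it is not just a sign: if you literally take $\mu_{\Gamma_0}=\nabla\Gamma_0\lrcorner\mu$ and then ``multiply by the scalar $2r_0$ coming from the gradient formula'', you put $r_0$ in the \emph{numerator}, whereas the stated result has $r_0$ in the denominator. The paper avoids this by not using the interior product at all: it observes that $(L,N,M,\overline M)$ are the metric duals $l_a,n_a,m_a,\overline m_a$, so that $\ud\Gamma_0=2r_0\,L$ and $\ud\Gamma_q=2r\,N$, writes $\mu=\tfrac{1}{i}\,L\wedge N\wedge M\wedge\overline M$, and then reads off $\mu_{\Gamma_0}$ from the defining factorization $\mu=\ud\Gamma_0\wedge\mu_{\Gamma_0}$, which immediately gives $\tfrac{1}{2ir_0}N\wedge M\wedge\overline M$. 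The double form is obtained in the same way by a second factorization against $\ud\Gamma_q=2r\,N$, yielding $\tfrac{1}{4ir_0r}M\wedge\overline M$, and Lemma~\ref{meancurvature} finishes. In short: use $\ud S\wedge\mu_S=\mu$ rather than $\nabla S\lrcorner\mu$; despite the wording of Definition~\ref{compdis}, that is what the paper actually does, and it is what produces the correct powers of $r_0$ and $r$.
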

\proof:
The volume form on $\Omega$ can be expressed in terms of the Newman-Penrose tetrad as:
\begin{equation*}
\mu=\frac{1}{i} L\wedge N\wedge M\wedge \overline{M}
\end{equation*}
so that, since $\ud \Gamma_{0}=2r_0 L$ and $\ud \Gamma_q = 2 r N$, we obtain immediately:
\begin{equation*}
\mu_{\Gamma_0}=\displaystyle{\frac{1}{2ir_0}N\wedge M\wedge \overline{M}}\text{ and }
\mu_{\Gamma_q}=\displaystyle{\frac{1}{2ir}L\wedge M\wedge \overline{M}}.
\end{equation*} 
The calculation of $\mu_{\Gamma_0, \Gamma}$ is obtained through the factorization given by Fubini's theorem:
\begin{equation*}
\forall \phi \in C^{\infty}_0(\Omega\times \Omega), \int_{A}\phi \mu_{\Gamma_0}(p)\wedge \mu_{\Gamma}(q)=\int_{\mathcal{J}^{+}(p_0)}\int_{\sigma(q)}\phi\mu_{\Gamma_0,\Gamma_q}(p)\wedge \mu(q),
\end{equation*}
where $A$ is the set $\{(p,q)|p\in \mathcal{C}^+(p_{0}) \text{ and } q \in \mathcal{C}^+(p) \}$. We get the desired expression of $\mu_{\Gamma_0,\Gamma_q}$:
\begin{equation*}
\mu_{\Gamma_0,\Gamma_q}=\frac{1}{4ir_0r}M\wedge \overline{M}=\frac{1}{4r_0 r}\mu_{\sigma(q)}.\text{\fin}
\end{equation*}

The next step consists in determining the variation of the metric $\mu_{\sigma(q)}$ when $q$ is in $\mathcal{I}^+(p_0)$. We first establish the technical lemma:
\begin{lemma}\label{devmes} Let $(\mathcal{N},h)$ be a smooth semi-riemanian manifold with metric $h$ and Levi-Cevita connexion $D$; let  $X$ be a smooth vector field on $\mathcal{N}$.
Let  $(\mathcal{M}_{p}, g)$ be a submanifold of $\mathcal{N}$ such that the $g$ metric induced by $h$ is non degenerate and depending smoothly on a parameter $p$ in $\mathcal{N}$ in the sense that there exists a smooth manifold $\Sigma$ and a smooth map $f:\mathcal{N} \times \Sigma \longrightarrow \mathcal{N}$ which satisfies: $f(p,\star)$ is an immersion and $f(p,\Sigma)=\mathcal{M}_p$.\\
We denote by $\mu_p$ the induced volume form on $\mathcal{M}_{p}$.\\
Then:
\begin{equation*}
D^p_X \mu_p=-h(H,X)\mu_p
\end{equation*}
where $H$ is the mean curvature vector field on $\mathcal{M}_p$.
\end{lemma}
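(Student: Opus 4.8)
The plan is to read the identity as the pointwise first variation formula for the induced volume form and reduce it to a computation of $\tfrac12\,g^{ij}(D^p_X g_{ij})$, then identify the mean-curvature term through the Gauss and Weingarten relations. First I would pull the family back to $\Sigma$: fix local coordinates $(\sigma^1,\dots,\sigma^k)$ on $\Sigma$ near a chosen point, set $E_i(p)=\partial_{\sigma^i}f(p,\cdot)$, so that $(E_i(p))$ is a coordinate frame of $T\mathcal{M}_p$ along $f(p,\cdot)$, and write $\mu_p=\sqrt{|\det(g_{ij}(p))|}\;\ud\sigma^1\wedge\cdots\wedge\ud\sigma^k$ with $g_{ij}(p)=h(E_i(p),E_j(p))$. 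Since the coframe $\ud\sigma^1\wedge\cdots\wedge\ud\sigma^k$ does not depend on $p$, applying $D^p_X$ together with Jacobi's formula $X(\det g)=\det(g)\,g^{ij}X(g_{ij})$ reduces the statement to the scalar identity $\tfrac12\,g^{ij}X(g_{ij})=-h(H,X)$ at the chosen point; the sign ambiguity coming from $|\det g|$ in the semi-Riemannian case cancels.

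Next I would compute $X(g_{ij})$. Metric compatibility of $D$ gives $X(g_{ij})=h(D^p_XE_i,E_j)+h(E_i,D^p_XE_j)$, where $D^p_XE_i$ is the covariant derivative of $E_i=\partial_{\sigma^i}f$ taken along the motion of $\mathcal{M}_p$ induced by moving $p$ along $X$. Because $E_i=\partial_{\sigma^i}f$ and the $p$-derivative and the $\sigma^i$-derivative of the single map $f$ commute, the symmetry lemma (torsion-freeness of $D$) yields $D^p_XE_i=D_{E_i}W$, with $W=\partial_pf\cdot X$ the induced variation vector field along $\mathcal{M}_p$. Contracting, $\tfrac12\,g^{ij}X(g_{ij})=g^{ij}h(D_{E_i}W,E_j)$.

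Then I would decompose $W=W^\top+W^\perp$ into its parts tangent and normal to $\mathcal{M}_p$. For the tangential part, $g^{ij}h(D_{E_i}W^\top,E_j)=\mathrm{div}_{\mathcal{M}_p}(W^\top)$, the intrinsic divergence. For the normal part, differentiating $h(W^\perp,E_j)=0$ in the direction $E_i$ gives $h(D_{E_i}W^\perp,E_j)=-h(W^\perp,D_{E_i}E_j)=-h(W^\perp,II(E_i,E_j))$ by the Gauss formula, hence $g^{ij}h(D_{E_i}W^\perp,E_j)=-h(W^\perp,\mathrm{tr}_g II)=-h(W^\perp,H)$, using that $H=\mathrm{tr}_g II$ is normal (this is the convention consistent with Lemma \ref{meancurvature}). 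Since $H$ is normal, $h(W^\perp,H)=h(W,H)$, which equals $h(H,X)$ once $W$ is identified with $X$ along $\mathcal{M}_p$; this gives $D^p_X\mu_p=\big(\mathrm{div}_{\mathcal{M}_p}(W^\top)-h(H,X)\big)\mu_p$.

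I expect the one delicate point — and the only real friction — to be the bookkeeping around the tangential term $\mathrm{div}_{\mathcal{M}_p}(W^\top)$ and the identification of $W$ with $X$. A $p$-dependent reparametrization of $f(p,\cdot)$ by diffeomorphisms of $\Sigma$ leaves $\mathcal{M}_p$ and $H$ unchanged but alters $W$ by a tangential field, hence affects only $\mathrm{div}_{\mathcal{M}_p}(W^\top)$; so the statement $D^p_X\mu_p=-h(H,X)\mu_p$ is to be read with $f$ normalized so that the variation field is normal and restricts to $X$ along $\mathcal{M}_p$ (which is the situation in the application, where $\mathcal{M}_q=\sigma(q)$ is dragged by the motion of $q$), in which case the tangential term vanishes. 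Alternatively — and this suffices for everything downstream — $\mathcal{M}_p=\sigma(q)$ is compact without boundary, so $\mathrm{div}_{\mathcal{M}_p}(W^\top)$ integrates to zero by Stokes' theorem. Everything else (Jacobi's formula, the symmetry lemma, the Gauss and Weingarten identities) is routine.
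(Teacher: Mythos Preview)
Your argument is correct and follows the same skeleton as the paper's proof: Jacobi's formula for the derivative of $\sqrt{|\det g|}$, the torsion-free identity to swap the order of differentiation, and the Gauss decomposition $D_{E_i}E_j=\nabla_{E_i}E_j+II(E_i,E_j)$ to extract $H=\mathrm{tr}_g II$. The paper carries this out in adapted normal coordinates $(x^1,\dots,x^{n+k})$ on $\mathcal{N}$ with $\mathcal{M}_p=\{x^{n+1}=\cdots=x^{n+k}=0\}$ and $\nabla_{\partial_{x^i}}\partial_{x^i}=0$ at the chosen point, and then argues that the two residual terms $D_{\partial_{x^i}}h(X,\partial_{x^i})$ and $h([X,\partial_{x^i}],\partial_{x^i})$ cancel pointwise, leaving only $-h(X,H)$.

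The one genuine difference is that you make explicit what the paper leaves implicit: you introduce the variation field $W=\partial_pf\cdot X$, decompose it as $W^\top+W^\perp$, and isolate the tangential contribution $\mathrm{div}_{\mathcal{M}_p}(W^\top)$. Your observation that this term survives unless the parametrization is chosen so that $W$ is normal (or unless one integrates over a closed $\mathcal{M}_p$) is a real point the paper does not address; its cancellation argument effectively identifies $X$ with $W$ and hides the reparametrization freedom. In the only application (Proposition~\ref{diracderivation1}), the parametrization $\omega\mapsto\exp_{p_0}(r_0(q,\omega)\omega)$ gives a variation field proportional to $l$, which is orthogonal to $\sigma(q)$, so $W^\top=0$ and both readings agree. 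Your version is therefore slightly more honest about the hypotheses needed for the pointwise identity, while the paper's normal-coordinate shortcut is marginally quicker once that issue is swept under the rug.
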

\proof: The Levi-Cevita connection induced by $g$ on $\mathcal{M}_{p}$ is denoted $\nabla$. \\
Let $p$ be a point in $\mathcal{N}$ and $q$ a point in $\mathcal{M}_{p}$. We choose around $q$ a map \\$(V,(x^1,x^2,\dots,x^n),(x^{n+1},\dots,x^{n+k}))$ normal at $q$ and $\mathcal{M}_{p}\cap V=\{x^{n+1}=\dots=x^{n+k}=0\}$ such that, at $q$:
\begin{equation}
\nabla_{\partial_{x^i}}\partial_{x^i}=0.
\end{equation}
The volume form on $\mathcal{M}_{p}$ around $q$ can be expressed:
\begin{equation*}
\mu_p=|\text{det}(g_{ij})|^{\frac{1}{2}}\ud x^1\wedge \ud x^2 \wedge \dots \wedge \ud x^n.
\end{equation*}
We calculate the derivative:
\begin{equation*}
D_X \mu_p=\text{Sign}(\det(g_{ij}))\frac{g^{ij}D_X g_{ij}}{2\det(g_{ij})}|\text{det}(g_{ij})|^{\frac{1}{2}}\ud x^1\wedge \ud x^2 \wedge \dots \wedge \ud x^n
\end{equation*}
and then evaluate at $q$, where the coordinate system is normal:
\begin{equation*}
D_X\mu_p=\sum_{i=1}^n\frac{1}{2}\varepsilon_i D_X g(\partial_{x^i},\partial_{x^i}) \mu_p
\end{equation*}
with $\varepsilon_i=g(\partial_{x^i},\partial_{x^i})$. Since the connection $D$ on $\mathcal{N}$ is metric, we get:
\begin{eqnarray*}
D_X \mu_p&=& \sum_{i=1}^n\varepsilon_ih(D_{X}\partial_{x^i},\partial_{x^i})\mu_p\\
&=& \sum_{i=1}^n\varepsilon_i h(D_{\partial_{x^i}}X+[X,\partial_{x^i}],\partial_{x^i})\mu_p\\
&=& \sum_{i=1}^n \varepsilon_i ( D_{\partial_{x^i}}h(X,\partial_{x^i})-h(X,D_{\partial_{x^i}}\partial_{x^i})+h([X,\partial_{x^i}],\partial_{x^i}))\mu_p
\end{eqnarray*}
Since $D_{\partial_{x^i}}\partial_{x^i}= \nabla_{\partial_{x^i}}\partial_{x^i}+II(\partial_{x^i},\partial_{x^i})$, we finally obtain:
\begin{eqnarray*}
\nabla_X \mu_p&=&-h(X,H)\mu_p + \sum_{i=1}^n \varepsilon_i ( D_{\partial_{x^i}}h(X,\partial_{x^i})-h(X, \nabla_{\partial_{x^i}}\partial_{x^i})+h([X,\partial_{x^i}],\partial_{x^i})\mu_p.
\end{eqnarray*}
We then notice that, for $i$ in $\{t1,n\}$:
$$
h([X,\partial_{x^i}],\partial_{x^i})=-\eps_{i}\frac{\partial X^{i}}{\partial x^{i}} \text{ and }  D_{\partial_{x^i}}h(X,\partial_{x^i})=\eps_{i}\frac{\partial X^{i}}{\partial x^{i}}.
$$
Since $\nabla_{\partial_{x^{i}}}\partial_{x^{i}}$ is vanishing at $q$, the only remaining term is:
$$
\nabla_X \mu_p=-h(X,H)\mu_p.\text{\fin}
$$


A straightforward application of this lemma is the proposition:
\begin{proposition}\label{diracderivation1}
Let $f:\Omega^2\rightarrow \ds_{Dirac} $ be a smooth mapping.\\
Then the following formula holds:
\begin{equation*}
\D^q \int_{\sigma(q)}f(q,p)\mu_{\sigma(q)}(p)=\int_{\sigma(q)}\D^qf(q,p)+\nabla^{p} r_0\cdot \hat\nabla_{l}^{p}f(q,p)- 2\rho\hat\nabla^{q} r_{0}\cdot f(q,p)\mu_{\sigma(q)}(p)
\end{equation*}
where $r_0$, being a function of both $p$ and $q$, satisfies $\nabla \Gamma_0=2r_0 l$.
\end{proposition}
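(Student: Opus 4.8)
\proof
The plan is to differentiate under the integral sign, keeping careful track of the two ways in which the right-hand side depends on $q$: through the integrand $f(q,p)$ and through the domain $\sigma(q)$ together with its induced volume form $\mu_{\sigma(q)}$.

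First I would fix the local parametrization of $\sigma(q)=\mathcal{C}^+(p_0)\cap\mathcal{C}^-(q)$ by $S^2$ through the exponential map at $p_0$ along the generators of $\mathcal{C}^+(p_0)$: for $\omega\in S^2$ the associated point $p(q,\omega)=\exp_{p_0}(r_0(q,\omega)\,\omega)$ is the one where the fixed generator indexed by $\omega$ meets $\mathcal{C}^-(q)$, i.e. where $\Gamma_q(p(q,\omega))=0$, and $r_0$ is the affine parameter of that generator attached to the parallely transported field $l$ (so $\nabla^p\Gamma_0=2r_0\,l$). By remark \ref{geometricobstructions} this description is valid only on local charts, so I would carry out the computation on one chart and glue with a partition of unity at the end; since all the data are smooth and $\sigma(q)$ is compact, differentiation under the integral is legitimate. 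Expanding $\D^q=\sum_i e_i\cdot\nabla^q_{e_i}$, it then remains to compute $\nabla^q_{e_i}\bigl(\int_{\sigma(q)}f(q,p)\,\mu_{\sigma(q)}(p)\bigr)$.

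The crucial geometric point is that, for fixed $\omega$, the point $p(q,\omega)$ stays on its (fixed) null generator of $\mathcal{C}^+(p_0)$ as $q$ moves, so its velocity $Y:=\partial_{e_i}p(q,\omega)$ is tangent to that generator: $Y=(\nabla^q_{e_i}r_0)\,l$, which is normal to $\sigma(q)$. This splits $\nabla^q_{e_i}\bigl(\int f\,\mu_{\sigma(q)}\bigr)$ into three pieces. Differentiating $f$ through its explicit $q$-slot gives $\nabla^q_{e_i}f$; differentiating through its $p$-slot (chain rule along $Y$) gives $(\nabla^q_{e_i}r_0)\,\nabla^p_l f$; and the variation of the measure is handled by Lemma \ref{devmes}, which gives $\nabla^q_{e_i}\mu_{\sigma(q)}=-g(H,Y)\,\mu_{\sigma(q)}$, where by Lemma \ref{meancurvature} $H=2(\rho'l+\rho n)$, so that $g(H,Y)=2(\nabla^q_{e_i}r_0)\bigl(\rho'\,g(l,l)+\rho\,g(l,n)\bigr)=2\rho\,(\nabla^q_{e_i}r_0)$. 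Summing the three pieces and applying the Clifford multiplication $\sum_i e_i\cdot$, the first reassembles into $\D^q f$, and the common scalar factor turns $\sum_i(\nabla^q_{e_i}r_0)e_i$ into the raised gradient $\hat\nabla^q r_0$, giving the term $-2\rho\,\hat\nabla^q r_0\cdot f$ and a middle term $\hat\nabla^q r_0\cdot\nabla^p_l f$; putting this middle Clifford factor into the form $\nabla^p r_0$ displayed in the statement then uses the relation $\nabla^p\Gamma_0=2r_0\,l$ together with parallel transport along the generator of $\mathcal{C}^-(q)$ joining $p$ to $q$.

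I expect the main obstacle to be exactly this last reconciliation: correctly identifying the deformation vector field $Y$ of $\sigma(q)$ (in particular that it is purely normal and proportional to $l$, with the right coefficient, so that Lemmata \ref{devmes} and \ref{meancurvature} apply verbatim) and then the index bookkeeping that converts the $q$-slot gradient of $r_0$ into the $p$-slot object $\nabla^p r_0$ appearing in the formula. The global patching over $\sigma(q)\cong S^2$ and the justification of differentiation under the integral are routine once the pointwise computation is in hand.\fin
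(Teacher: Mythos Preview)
Your approach is essentially identical to the paper's: parametrize $\sigma(q)$ by $S^2$ via the exponential map at $p_0$, differentiate under the integral sign, use the chain rule to split the $q$-derivative into the explicit $q$-slot of $f$, the $p$-slot via the variation $Y=(\nabla^q_V r_0)\,l$, and the variation of the measure handled by Lemmata \ref{devmes} and \ref{meancurvature}, and finally Clifford-sum to reassemble $\D^q$ and $\hat\nabla^q r_0$.

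One remark: you spend effort trying to reconcile the middle term $\hat\nabla^q r_0\cdot\nabla^p_l f$ that your computation produces with the $\nabla^p r_0\cdot\hat\nabla^p_l f$ printed in the statement. This is unnecessary: the paper's own proof ends with exactly the expression you obtain, $\hat\nabla^q r_0\cdot\nabla^p_l f$, so the displayed statement is simply a typographical slip and no further ``parallel transport'' argument is required.
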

\proof: Let $V$ a vector field on $\Omega$. We work with the exponential map  centered at $p_0$. $\sigma(q)$ can then be parametrized by $S^2$:
\begin{equation*}
\omega\mapsto \exp_{p_0}(r_0(q, \omega) \omega).
\end{equation*}
Let us consider the variation of $\sigma(q)$ defined by, for some positive $\eps$:
\begin{equation*}
\begin{array}{lcl}
]-\eps,\eps[\times S^2 &\rightarrow &\Omega\\
(t,\omega)&\mapsto& \exp_{p_0}(r_0(q+tV,\omega)\omega)
\end{array}.
\end{equation*}
Since 
\begin{eqnarray*}
\nabla_{V}^q\big ( f(q,p) \big)&=&\nabla_{V}^q\big( f(q, \exp_{p_{0}}(r_{0}(q, \omega)\omega)\big)\\
&=&\nabla_{V}^qf(q,p)+\left(\frac{\ud }{\ud t}f(q, \exp_{p_{0}}(r_{0}(q+tV, \omega)\omega)\right)\big|_{t=0}
\end{eqnarray*}
and 
$$
\left(\frac{\ud }{\ud t}f(q, \exp_{p_{0}}(r_{0}(q+tV, \omega)\omega)\right)\big|_{t=0}=\nabla^{q}_V r_0 \nabla_{l}^{p}f(q,p),
$$
this gives, using lemma \ref{meancurvature}:
\begin{equation*}
\nabla_V^q \int_{\sigma(q)}f(q,p)\mu_{\sigma(q)}(p)=\int_{\sigma(q)}\nabla_V^qf(q,p)+\nabla^{q}_V r_0 \nabla_{l}^{p}f(q,p)- 2\rho\nabla^{q}_V r_{0} f(q,p)\mu_{\sigma(q)}
\end{equation*}
so that, when choosing an orthonormal basis $(e_i)$ on $\Omega$, we obtain:
\begin{equation*}
\D^q \int_{\sigma(q)}f(q,p)\mu_{\sigma(q)}(p)=\int_{\sigma(q)}\D^qf(q,p)+\hat\nabla^{q} r_0\cdot \nabla_{l}^{p}f(q,p)- 2\rho\hat\nabla^{q} r_{0}\cdot f(q,p)\mu_{\sigma(q)}(p).\text{\fin}
\end{equation*}

We finally establish the following proposition:

\begin{proposition}\label{diracderivation2}
Let $f:\Omega^2\rightarrow \ds_{Dirac} $ be a smooth mapping.\\
Then the following formula holds:
\begin{equation*}
\D^q \int_{\mathcal{D}(q)}f(q,p)\mu_{\Gamma_{0}}(p)=\int_{\mathcal{D}(q)}\D^qf(q,p)\mu_{\Gamma_{0}}(p)+\int_{\sigma(q)}\hat\nabla^{q} r_0\cdot f(q,p)\frac{\mu_{\sigma(q)}(p)}{2r},
\end{equation*}
where $r_0$ and $r$, being functions of both $p$ and $q$, satisfy $\nabla^p \Gamma_0=2r_0 l$ and $\nabla^p \Gamma_{q}=2r n$.
\end{proposition}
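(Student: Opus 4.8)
The plan is to differentiate under the integral sign in the $q$-variable, exactly as in the proof of Proposition \ref{diracderivation1}, the key structural remark being that $\mathcal{D}(q)$ is a piece of the \emph{fixed} cone $\mathcal{C}^+(p_0)$, so the only sources of $q$-dependence are the integrand $f(q,\cdot)$ and the moving upper boundary $\sigma(q)=\partial\mathcal{D}(q)$. In particular, unlike in Proposition \ref{diracderivation1}, there is no ``chain-rule'' term of the type $\hat\nabla^q r_0\cdot\nabla^p_l f$, because interior points of $\mathcal{D}(q)$ do not move with $q$.

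Concretely, I would parametrise $\mathcal{C}^+(p_0)$ by $(s,\omega)\in\R_+\times S^2$ via $p(s,\omega)=\exp_{p_0}(s\omega)$, where $s$ is the affine parameter along the generator in direction $\omega$, so that $\mathcal{D}(q)=\{(s,\omega):0<s\le r_0(q,\omega)\}$, with $p(r_0(q,\omega),\omega)\in\sigma(q)$ and $\nabla^q_V r_0=\frac{\ud}{\ud t}\big|_{t=0}r_0(q+tV,\omega)$. Fixing $V\in T_q\Omega$ and differentiating $t\mapsto\int_{\mathcal{D}(q+tV)}f(q+tV,p)\,\mu_{\Gamma_0}(p)$ at $t=0$, one contribution comes from the integrand and gives $\int_{\mathcal{D}(q)}\nabla^q_V f(q,p)\,\mu_{\Gamma_0}(p)$, while the other comes from the moving endpoint $r_0(q,\omega)$ of the $s$-integration and is a surface term supported on $\sigma(q)$, of the form $\int_{\sigma(q)}(\nabla^q_V r_0)\,f(q,p)\,\nu_q(p)$ for a $2$-form $\nu_q$ on $\sigma(q)$ that one must identify. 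The identification $\nu_q=\frac{1}{2r}\mu_{\sigma(q)}$ follows from Proposition \ref{expmeasures}: the relevant measure is the transverse ``slice'' of $\mu_{\Gamma_0}$ along the generator direction $l$ at $\sigma(q)$, and the factor $\frac{1}{2r}$ comes out of $\nabla^p\Gamma_q=2r\,n$ together with $g(l,n)=1$, i.e. from the way the affine parameter $r$ along $\mathcal{C}^-(q)$ enters the Leray form of $\Gamma_q$ restricted to $\mathcal{C}^+(p_0)$. Tracking this constant carefully (the normalisations used here are the same as those already appearing in Lemma \ref{meancurvature} and Proposition \ref{expmeasures}) is the one delicate point; the rest is the Leibniz/Reynolds bookkeeping of Proposition \ref{diracderivation1}.

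Then I would take an orthonormal frame $(e_i)$ on $\Omega$, apply the variation formula with $V=e_i$, Clifford-multiply by $e_i$ and sum over $i$: the left-hand side assembles into $\D^q\int_{\mathcal{D}(q)}f\,\mu_{\Gamma_0}$, the integrand contribution into $\int_{\mathcal{D}(q)}\D^q f\,\mu_{\Gamma_0}$ (since $\D^q=\sum_i e_i\cdot\nabla^q_{e_i}$), and the surface contribution into $\int_{\sigma(q)}\hat\nabla^q r_0\cdot f\,\frac{\mu_{\sigma(q)}}{2r}$ (since $\hat\nabla^q r_0=\sum_i(\nabla^q_{e_i}r_0)e_i$), which is exactly the asserted identity.

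Finally, one technical caveat must be dealt with as in Remark \ref{derivdistrib} and the proof of Theorem \ref{representationformula}: $\nabla\Gamma_0$ vanishes at the vertex $p_0$, so $\mu_{\Gamma_0}$ on $\mathcal{C}^+(p_0)$ is not literally of the type covered by Definition \ref{compdis}, and the differentiation step should first be carried out on the truncated cones $\mathcal{C}^+(p_0)\cap\{\Gamma_0\ge\eps\}$ (equivalently, with $H^+(\Gamma_0)$ replaced by $H^+(\Gamma_0-\eps)$), the limit $\eps\to0$ being taken at the end; this is harmless since $f$ is smooth and the excised cap near $p_0$ contributes an amount that vanishes with $\eps$. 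An equivalent, perhaps cleaner, way to organise the computation is to write $\int_{\mathcal{D}(q)}f\,\mu_{\Gamma_0}=(\delta^+(\Gamma_0)\,H^-(\Gamma_q),\,f(q,\cdot))_p$ and differentiate this pairing in $q$ using Proposition \ref{derivation}; the surface term then arises from $\nabla^q H^-(\Gamma_q)=\delta(\Gamma_q)\,\nabla^q\Gamma_q$, and the same measure identification on $\sigma(q)=\{\Gamma_0=0\}\cap\{\Gamma_q=0\}$ is what is ultimately needed.
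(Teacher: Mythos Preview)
Your proposal is correct and follows essentially the same route as the paper: parametrise $\mathcal{C}^+(p_0)$ by the exponential map $(s,\omega)\mapsto\exp_{p_0}(s\omega)$, write $\mathcal{D}(q)=\{0<s\le r_0(q,\omega)\}$, differentiate the resulting iterated integral in $q$ to obtain the bulk term and the moving-boundary term on $\sigma(q)$, and then Clifford-multiply and sum to pass from $\nabla^q_V$ to $\D^q$. The paper's proof is in fact considerably terser than yours (it writes the computation in three lines and does not spell out the measure identification or the vertex caveat), so your additional remarks about the $\tfrac{1}{2r}$ factor and the $\eps$-truncation near $p_0$ are welcome elaborations rather than deviations.
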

\proof: We use exactly the same method as in the proof of proposition \ref{diracderivation1}. Using the parametrization of the exponential map centered at $p_{0}$, we have:
\begin{eqnarray*}
\D^q \int_{\mathcal{D}(q)}f(q,p)\mu_{\Gamma_{0}}(p)&=&\D^q\int_{S^2}\int_{0}^{r_{0}(q,\omega)}f(q,r\omega)\frac{\ud r}{2r}k(\omega,r)\ud \omega_{S^2}\\
&=&\int_{\mathcal{D}(q)}\D^qf(q,p)\mu_{\Gamma_{0}}(p)+\int_{S^2}\hat \nabla^q r_{0} \cdot f(q,r\omega) \frac{k(\omega,r)\ud \omega_{S^2}}{2r}\\
&=&\int_{\mathcal{D}(q)}\D^qf(q,r\omega)\mu_{\Gamma_{0}}(p)+\int_{\sigma(q)}\hat\nabla^{q} r_0\cdot f(q,p)\frac{\mu_{\sigma(q)}(p)}{2r}.\text{\fin}
\end{eqnarray*}

\subsubsection{Derivation of the integral formula}

We now consider the characteristic Cauchy problem on $\Omega$:
\begin{equation}\label{ccps}
\left\{\begin{array}{lcl}
\D u &=&0 \text{ on } \mathcal{J}^+(p_0)\\
u&=&\theta \text{ on  } \mathcal{C}^+(p_0)
\end{array}\right.
\end{equation}
where $\theta$ is a smooth spinor field on $\mathcal{C}^+(p_0)$, whose support does not encounter the vertex of the cone and satisfies the constraint equations given by lemma \ref{scalarrelation}.
\begin{remark} The term "spinor field on the cone" must be understood as "trace on the cone" of a Dirac spinor field on $\Omega$ and not as a spinor field  constructed as spinors on the manifold $\mathcal{C}^+(p_0)$.
\end{remark}
The basis constructed in the previous section is used to split the spinors:
\begin{equation}
\begin{array}{lcl}
\theta=\xi^{\textbf{I}'}\eps_{\textbf{I}'}^{A'}+\zeta_{\textbf{I}}\eps^{\textbf{I}}_{A}=\xi^{0'}\overline{o}^{A'}+\xi^{1'}\overline{\iota}^{A'}+\zeta_0(-\iota_{A})+\zeta_1o_{A}.
\end{array}
\end{equation}
$u$ will be split on $\ds_A\otimes \ds^{A'}$:
\begin{eqnarray*}
u&=&\phi_A+\psi^{A'}\\
u&=&\phi_0(-\iota_{A})+\phi_1o_{A}+\psi^{0'}\overline{o}^{A'}+\psi^{1'}\overline{\iota}^{A'}.
\end{eqnarray*}

The solution of \eqref{ccps} can be written in function of its data on the cone and the basis $(o^A,\iota^A)$:
\begin{theorem}\label{repweyl}
Let $u$ be a solution of \eqref{ccps}. Then, for any $q$ in $\mathcal{J}^+(p_0)$:
\begin{gather*}
u(q)=\int_{\sigma(q)}\left(\frac{k_p(q)}{r}\right)(\nabla_l^p\xi^{1'}(p)-\rho\xi^{1'}(p))\left(\nabla^{AA'}_qr_0\right) o_{A}(q)\mu_{\sigma(q)}(p)\\
+\int_{\sigma(q)}\xi^{1'}(p)\nabla^{AA'}\left(\overline k_p(q)o_{A}(q)\right)\mu_{\sigma(q)}(p)\\
+\int_{\sigma(q)}\left(\frac{k_p(q)}{r}\right)(\nabla_l^p\zeta_0(p)-\rho\zeta_0(p))\left(\nabla_{AA'}^qr_0\right) \overline o^{A'}(q)\mu_{\sigma(q)}(p)\\
+\int_{\sigma(q)}\zeta_0(p)\nabla_{AA'}\left(\overline k_p(q)\overline o^{A'}(q)\right)\mu_{\sigma(q)}(p)+\int_{\sigma(q)}\hat \nabla^qr_{0} \cdot \big(\nabla^p \Gamma_{0}\cdot u,\tilde V_{p}(q)\big) \frac{\mu_{\sigma(q)}(p)}{2r}\\
+\int_{\mathcal{D}(q)}(\D^p \tilde V_q,\hat \nabla^p \Gamma_0 \cdot u)\mu_{\Gamma_0}(p)
\end{gather*}
\end{theorem}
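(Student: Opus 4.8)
The plan is to start from the integral representation already obtained in Proposition~\ref{repintegral}, feed in the geometric data of \S\ref{geometricdataonthecone} (the Leray forms of Proposition~\ref{expmeasures}, the Newman--Penrose spin basis, the mean curvature of Lemma~\ref{meancurvature}), and then commute $\D^q$ through the two integrals by means of Propositions~\ref{diracderivation1} and~\ref{diracderivation2}. First I would evaluate the distributional identity of Proposition~\ref{repintegral} at a point $q$ in the \emph{interior} of $\mathcal{J}^+(p_0)$: there $H^+(\Gamma_0)\equiv 1$ near $q$, the term $\delta^+(\Gamma_0)$ that appears when $\D^q$ hits the Heaviside factor is supported on $\mathcal{C}^+(p_0)$ and does not contribute, and since $u$ is smooth the identity reduces to the pointwise equality
\[
u(q)=\D^q\!\left(\int_{\sigma(q)}\big(\nabla^p\Gamma_0\cdot u,\tilde U_p(q)\big)\,\mu_{\Gamma_0,\Gamma_q}(p)+\int_{\mathcal{D}(q)}\big(\nabla^p\Gamma_0\cdot u,\tilde V_p(q)\big)\,\mu_{\Gamma_0}(p)\right).
\]

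Next I would rewrite the integrands. By Proposition~\ref{expmeasures}, $\mu_{\Gamma_0,\Gamma_q}=\frac{1}{4r_0 r}\mu_{\sigma(q)}$, and since $\nabla^p\Gamma_0=2r_0\,l$ the Clifford factor is $\nabla^p\Gamma_0\cdot u=2r_0\,(l\cdot u)$, so one power of $r_0$ cancels. By Remark~\ref{cliffcontr}, Clifford multiplication by $l$ is, up to $\pm i\sqrt{2}$, contraction with $n=\iota^A\overline\iota^{A'}$; hence $l\cdot u$ only involves the two components of $u$ transverse to $l$, which on $\sigma(q)\subset\mathcal{C}^+(p_0)$ (where $u=\theta$) are precisely the components $u_0,u^{1'}$, namely $\zeta_0,\xi^{1'}$, singled out in the remark following Proposition~\ref{repintegral}; the other two data components do not enter. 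Writing $\tilde U_p(q)=k_p(q)\,\tilde\tau_p(q)$ with $\tilde\tau$ the parallel transport of the bispinor identity \eqref{functiontau}, the $p$-contraction inside the symplectic product parallel-transports $l\cdot\theta$ along the geodesic from $p$ to $q$; this is the source of the spin-basis vectors $o_A(q),\overline o^{A'}(q)$ and of the density $\overline k_p(q)$ appearing in the statement.

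Then I would apply the commutation formulas. Proposition~\ref{diracderivation1}, with $f(q,p)=\frac{k_p(q)}{2r}\big((l\cdot\theta)(p)\ \text{transported to }q\big)$, turns $\D^q\!\int_{\sigma(q)}f\,\mu_{\sigma(q)}$ into $\int_{\sigma(q)}\big(\D^qf+\hat\nabla^q r_0\cdot\nabla^p_l f-2\rho\,\hat\nabla^q r_0\cdot f\big)\mu_{\sigma(q)}$, and Proposition~\ref{diracderivation2}, applied to the $\mathcal{D}(q)$-integral, produces $\int_{\mathcal{D}(q)}\D^q(\cdot)\,\mu_{\Gamma_0}$ together with the boundary term $\int_{\sigma(q)}\hat\nabla^q r_0\cdot\big(\nabla^p\Gamma_0\cdot u,\tilde V_p(q)\big)\frac{\mu_{\sigma(q)}}{2r}$, which is the fifth term of the statement verbatim. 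In the $\sigma(q)$-part, the piece $\D^q f$ is where $\D^q$ falls on the $q$-dependent transported data $\overline k_p(q)o_A(q)$, $\overline k_p(q)\overline o^{A'}(q)$, giving the terms $\int\xi^{1'}\nabla^{AA'}(\overline k_p(q)o_A(q))$ and $\int\zeta_0\nabla_{AA'}(\overline k_p(q)\overline o^{A'}(q))$; the two remaining pieces combine, once $\nabla^p_l$ is distributed over the density $\frac{k_p(q)}{2r}$ using $\nabla_l r_0=1$, the transport equation for $k$, and the spin-coefficient relations of Lemma~\ref{meancurvature} and Proposition~\ref{expmeasures}, into $\frac{k_p(q)}{r}(\nabla^p_l\xi^{1'}-\rho\xi^{1'})$ and $\frac{k_p(q)}{r}(\nabla^p_l\zeta_0-\rho\zeta_0)$, the leftover $\hat\nabla^q r_0$ reappearing as the gradient factors $\nabla^{AA'}_q r_0$, $\nabla^q_{AA'}r_0$. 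Finally, in the $\mathcal{D}(q)$-term I would use $\D^q(\nabla^p\Gamma_0\cdot u,\tilde V_p(q))=(\D^p\tilde V_q,\hat\nabla^p\Gamma_0\cdot u)$, which follows from the symmetry of Friedlander's elementary kernel (so that differentiating in the $q$-slot equals, up to the sign dictated by the antisymmetry of the symplectic product, differentiating in the $p$-slot); summing the six contributions yields the stated formula.

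The hard part is the bookkeeping just described: checking that the three terms produced by Proposition~\ref{diracderivation1}, together with the factor $2r_0$ from $\nabla^p\Gamma_0$ and the factor $\frac{1}{4r_0r}$ from the Leray form, genuinely collapse to the clean combinations $\frac{k_p(q)}{r}(\nabla^p_l\xi^{1'}-\rho\xi^{1'})$ and the like, and that no spurious terms remain. This needs the precise transport equation for $k$, the affine relation $\nabla_l r_0=1$ and the behaviour of $r$ along the generators, the reality of $\rho,\rho'$ from Lemma~\ref{meancurvature}, and careful tracking of the conjugations forced by the splitting of the symplectic product over $\ds_A\oplus\ds^{A'}$. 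A secondary technical point, flagged in Remark~\ref{geometricobstructions}, is that the Newman--Penrose tetrad cannot be chosen globally on $\sigma(q)\cong\mathbb{S}^2$, so the computation is done on two charts and one checks that the final expression, which depends only on $l$ and $n$, patches.
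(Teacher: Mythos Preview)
Your approach is essentially the same as the paper's: both start from Proposition~\ref{repintegral}, insert the Leray forms of Proposition~\ref{expmeasures}, compute the Clifford product $\hat\nabla^p\Gamma_0\cdot u$ explicitly in the spin basis, use the parallel-transport property of $\tilde\tau_p(q)$ to pass the spin frame from $p$ to $q$, and then commute $\D^q$ through the two integrals via Propositions~\ref{diracderivation1} and~\ref{diracderivation2}, collecting terms after projecting onto $\ds_A$ and $\ds^{A'}$. The only small divergence is in the $\mathcal{D}(q)$-term: the paper does not invoke any symmetry of the Friedlander kernel there but simply lets $\D^q$ fall on $\tilde V_p(q)$ through Proposition~\ref{diracderivation2} (the $\D^p$ appearing in the displayed statement is a slip for $\D^q$, as the proof itself writes $\D^q\tilde V_q$), so that step is more elementary than you suggest.
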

\begin{remark} It is possible to obtain a representation formula for the Goursat problem for the Weyl equation:
$$
\nabla^{AA'}\phi_{A}=0
$$
by projecting the solution obtained in theorem \ref{repweyl} on the subspace of Dirac spinors $\ds_{A}$.
\end{remark}
\proof:  Let $q$ be a point in $\mathcal{J}^+(p_0)$. Using proposition \ref{repintegral}, proposition \ref{expmeasures} and proposition \ref{diracderivation2}, we have:
\begin{gather*}
u(q)=
\D^q\left(\int_{\sigma(q)}\left(\hat \nabla^p \Gamma_0\cdot u,\tilde U_p\right)_p\displaystyle{\frac{1}{4r_0r}\mu_{\sigma(q)}}\right)+\int_{\sigma(q)}\hat \nabla^qr_{0}\cdot  \big(\nabla^p \Gamma_{0}\cdot u,\tilde V_{p}(q)\big)   \frac{\mu_{\sigma(q)}(p)}{2r}\\+\int_{\mathcal{D}(q)}(\hat \nabla^p \Gamma_0 \cdot u,\D^q \tilde V_q)\mu_{\Gamma_0}(p).
\end{gather*}
The bracket in the first integral can be calculated as follows:
\begin{eqnarray*}
\left(\tilde U_p,\displaystyle{\frac{\hat \nabla^p \Gamma_0\cdot u}{2rr_0}}\right)_p&=&k_p(q)\left(\tau_p(q),\displaystyle{\frac{1}{2r}}n\cdot u\right)_p\\
&=&i\sqrt{2}k_p(q)\left(\tau_p(q),\displaystyle{\frac{1}{2r}}\left(-l^{AA'} \phi_A+l_{AA'} \psi^{A'}\right)\right)_p\\
&=&i\sqrt{2}k_p(q)\left(\tau_p(q),\displaystyle{\frac{1}{2r}}\left(-o^{A}\overline{o}^{A'} \phi_A+o_{A}\overline{o}_{A'} \psi^{A'}\right)\right)_p\\
&=&i\sqrt{2}k_p(q)\left(\tau_p(q),\displaystyle{\frac{1}{2r}}\left(-\phi_0\overline{o}^{A'}+\psi^{1'}o_{A}\right)\right)_p\\
&=&i\sqrt{2}k_p(q)\displaystyle{\frac{1}{2r}}\left(-\phi_0(p)\overline{o}^{A'}(q)+\psi^{1'}(p)o_{A}(q)\right)\\
\end{eqnarray*}

We then use proposition \ref{diracderivation1} to calculate the first integral:
\begin{gather*}
\D^q\left(\int_{\sigma(q)}\left(\tilde U_p,\hat \nabla^p \Gamma_0\cdot u\right)_p\displaystyle{\frac{1}{4r_0r}}\mu_{\sigma(q)}\right)\\=i\sqrt{2}\D^q\left(\int_{\sigma(q)}k_p(q)\left(-\phi_0(p)\overline{o}^{A'}(q)+\psi^{1'}(p)o_{A}(q)\right)\displaystyle{\frac{1}{2r}\mu_{\sigma(q)}}\right)\nonumber\\
=i\sqrt{2}\int_{\sigma(q)}\D^qk_p(q) \left(\displaystyle{\frac{-\phi_0(p)\overline{o}^{A'}(q)}{2r}}+\displaystyle{\frac{\psi^{1'}(p)o_{A}(q)}{2r}}\right)\\
+\hat\nabla^qr_0\cdot \nabla^p_l k_p(q)\left(\displaystyle{\frac{-\phi_0(p)\overline{o}^{A'}(q)}{2r}}+\displaystyle{\frac{\psi^{1'}(p)o_{A}(q)}{2r}}\right)\\
-\displaystyle{\frac{1}{r}}\rho\hat\nabla^q r_0\cdot\left(-\phi_0(p)\overline{o}^{A'}(q)+\psi^{1'}(p)o_{A}(q)\right)\mu_{\sigma(q)}(p).
\end{gather*}
In order to simplify the calculation, the previous formula is projected on $\ds_A$. The singular part on this subspace is written, after expansion:
\begin{eqnarray*}
A&=&i\sqrt{2}\int_{\sigma(q)}i\sqrt{2}\nabla^q_{AA'}\left(k_p(q)\left(\displaystyle{\frac{-\phi_0(p)\overline{o}^{A'}(q)}{2r}}\right)\right)\\&&+i\sqrt{2}\nabla^q_{AA'}r_0\nabla^p_l\left(k_p(q)\left(\displaystyle{\frac{-\phi_0(p)\overline{o}^{A'}(q)}{2r}}\right)\right)\\
&&-(i\sqrt{2})k_p(q)\rho\nabla^q_{AA'}r_0\left(\displaystyle{\frac{-\phi_0(p)\overline{o}^{A'}(q)}{r}}\right)\mu_{\sigma(q)}(p).\nonumber\\
&=&-\int_{\sigma(q)}\nabla^q_{AA'}\left(k_p(q)\left(\displaystyle{\frac{\phi_0(p)\overline{o}^{A'}(q)}{r}}\right)\right)+\nabla^q_{AA'}r_0\nabla^p_l\left(k_p(q)\left(\displaystyle{\frac{\phi_0(p)\overline{o}^{A'}(q)}{r}}\right)\right)\nonumber \\&&-2k_p(q)\rho\nabla^q_{AA'}r_0\left(\displaystyle{\frac{-\phi_0(p)\overline{o}^{A'}(q)}{r}}\right)\mu_{\sigma(q)}(p).\nonumber\\
&=&-\int_{\sigma(q)}\phi_0(p)\nabla^q_{AA'}\left(k_p(q)\left(\displaystyle{\frac{\overline{o}^{A'}(q)}{r}}\right)\right)+\nabla^q_{AA'}r_0\nabla^p_l\left(k_p(q)\left(\displaystyle{\frac{\phi_0(p)\overline{o}^{A'}(q)}{r}}\right)\right)\nonumber \\&&-2k_p(q)\rho\nabla^q_{AA'}r_0\left(\displaystyle{\frac{-\phi_0(p)\overline{o}^{A'}(q)}{r}}\right)\mu_{\sigma(q)}(p).\nonumber\\
\end{eqnarray*}
Expanding all the products:
\begin{eqnarray*}
A&=&-\int_{\sigma(q)}\Big(\phi_{0}(p)\left(\overline o^{A'}(q)\nabla^q_{AA'}\left(\frac{k_p(q)}{r}\right)+\left(\frac{k_p(q)}{r}\right)\nabla_{AA'}^q\overline o^{A'}(q) \right)\\
&&+\left(\left(\frac{k_p(q)}{r}\right) \nabla_l^p \phi_0+\nabla_l^p\left(\frac{k_p(q)}{r}\right)  \phi_0\right) \left(\nabla_{AA'}^qr_0\right) \overline o^{A'}(q)\\
&&\left.-2\left(\frac{k_{q}(p)}{r}\right)\rho(\nabla^q_{AA'}r_0)\phi_0(p)\overline{o}^{A'}(q)\right)\mu_{\sigma(q)}(p)\\
&=&-\int_{\sigma(q)}\left(\frac{k_p(q)}{r}\right)(\nabla_l^p\phi_0(p)-2\rho\phi_0(p))\left(\nabla_{AA'}^qr_0\right) \overline o^{A'}(q)\mu_{\sigma(q)}(p)\\
&&-\int_{\sigma(q)}\phi_0(p)\nabla_{AA'}\left(\left(\frac{k_p(q)}{r}\right)\overline o^{A'}(q)\right)\mu_{\sigma(q)}(p).
\end{eqnarray*}
Since the quantities which appear in the integral are the restriction of $u$ and its tangential derivative along the null cone $\mathcal{C}^+(p_0)$, $\phi$ can be replaced in the integral by the data of the Goursat problem $\zeta_0$:
\begin{gather*}
\int_{\sigma(q)}\left(\frac{k_p(q)}{r}\right)(\nabla_l^p\zeta_0(p)-2\rho\zeta_0(p))\left(\nabla_{AA'}^qr_0\right) \overline o^{A'}(q)\mu_{\sigma(q)}(p)\\
+\int_{\sigma(q)}\zeta_0(p)\nabla_{AA'}\left(\left(\frac{k_p(q)}{r}\right)\overline o^{A'}(q)\right)\mu_{\sigma(q)}(p).
\end{gather*}

We obtain the complete formula for Dirac spinors by adding the corresponding quantity on $\ds^{A'}$, meaning:
\begin{gather*}
\int_{\sigma(q)}\left(\frac{k_p(q)}{r}\right)(\nabla_l^p\xi^{1'}0(p)-2\rho\xi^{1'}(p))\left(\nabla^{AA'}_qr_0\right) o_{A}(q)\mu_{\sigma(q)}(p)\\
+\int_{\sigma(q)}\phi_0(p)\nabla^{AA'}\left(\left(\frac{k_p(q)}{r}\right)o_{A}(q)\right)\mu_{\sigma(q)}(p).
\end{gather*}
Noticing that the calculation has been done for $(\tilde U_{p}(q),\hat \nabla \Gamma_{0}\cdot u)$ in order to use the definition of $\tau(p,q)$, we obtain the complete formula using the antisymmetry of the symplectic product.\fin

It is now possible to obtain the formula established by Penrose in \cite{p80} in the Minkowski case:
\begin{theorem}[Penrose]
Let $u$ be a solution of \eqref{ccps}.\\
Then, for all $q$ in $\mathcal{J}^+(p_0)$, $u$ can be written:
\begin{gather*}
u(q)=\int_{\sigma(q)}\frac{1}{2\pi r}(\nabla_l^p\xi^{1'}0(p)-2\rho\xi^{1'}(p))\left(\nabla^{AA'}_qr_0\right) o_{A}(q)\mu_{\sigma(q)}(p)\\
+\int_{\sigma(q)}\frac{1}{2\pi r}(\nabla_l^p\zeta_0(p)-2\rho\zeta_0(p))\left(\nabla_{AA'}^qr_0\right) \overline o^{A'}(q)\mu_{\sigma(q)}(p)
\end{gather*}
\end{theorem}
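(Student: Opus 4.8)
The plan is to obtain this statement as the specialisation of Theorem \ref{repweyl} to the Minkowski background $(\R^4,\eta)$, where three features of flat geometry collapse the general formula to the displayed one. First, since $\mathrm{Scal}\equiv 0$, the Schr\"odinger--Lichnerowicz formula \eqref{sch} gives $\D^2=\square$, and the fundamental solution of $\square$ on four-dimensional Minkowski space satisfies Huygens' principle; hence the tail vanishes, $\tilde V_p\equiv 0$, and the fundamental solution $\tilde G_p$ reduces to its singular part $\tilde U_p\,\delta^+(\Gamma_p)$. Consequently the two integrals in the formula of Theorem \ref{repweyl} that carry $\tilde V_p$ drop out.

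Second, I would compute $k_p(q)$. By construction $k$ solves the transport equation $2\langle\nabla\Gamma_q,\nabla k_q\rangle+(\square\Gamma_q-8)k_q=0$ with $k_p(p)=\tfrac1{2\pi}$, and it is characterised by $\mu_{\sigma(q)}=k_p(q)\,r^2\mu_{S^2}$, i.e. it records the defect between the area form induced on $\sigma(q)=\mathcal C^+(p_0)\cap\mathcal C^-(q)$ and the round form of radius $r$. In flat space $\sigma(q)$ is a genuine metric $2$-sphere of radius $r$, so this defect is trivial and $k_p(q)\equiv\tfrac1{2\pi}$ on all of $\Omega\times\Omega$. Setting $k_p(q)=\tfrac1{2\pi}$ turns the first and third integrals of Theorem \ref{repweyl} into precisely the two integrals of the statement, with coefficient $\nabla^p_l\xi^{1'}-2\rho\,\xi^{1'}$ (respectively $\nabla^p_l\zeta_0-2\rho\,\zeta_0$), as produced by the expansion of $A$ in the proof of Theorem \ref{repweyl}.

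Third --- and this is the only genuinely geometric point --- I would show that the two remaining ``gradient'' integrals, namely $\int_{\sigma(q)}\xi^{1'}(p)\,\nabla^{AA'}\!\bigl(\tfrac1{2\pi r}o_A(q)\bigr)\mu_{\sigma(q)}(p)$ and its companion built from $\zeta_0$ and $\overline{o}^{A'}(q)$, vanish. In flat space parallel transport is path independent, so the null vector $l$ --- parallelly transported first along the generators of $\mathcal C^+(p_0)$ and then along the geodesic $p\to q$ --- is simply a constant vector field, and $o^A$ may be taken covariantly constant, $\nabla^{BB'}_q o_A=0$. Moreover the generator of $\mathcal C^-(q)$ through $p$ is the affinely parametrised line $s\mapsto p+s\,n^a$ with $g(l,n)=1$; contracting $q-p=r\,n$ with $l$ gives $r=l_a(q-p)^a$, hence $\nabla^{q}_{AA'}r=l_{AA'}=o_A\overline{o}_{A'}$. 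It follows that $\nabla^{AA'}\!\bigl(\tfrac1{2\pi r}o_A(q)\bigr)=-\tfrac1{2\pi r^2}\,o_Ao^A\,\overline{o}^{A'}=0$ since $o_Ao^A=0$, and likewise $\nabla_{AA'}\!\bigl(\tfrac1{2\pi r}\overline{o}^{A'}(q)\bigr)=0$ since $\overline{o}^{A'}\overline{o}_{A'}=0$. As in Remark \ref{geometricobstructions}, this is carried out on the two charts covering $\sigma(q)$ and glued, which is harmless as the integrand depends only on $l$ and $n$.

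Collecting what survives --- the first and third integrals of Theorem \ref{repweyl} with $k_p(q)=\tfrac1{2\pi}$ --- yields exactly the claimed formula, which is Penrose's representation from \cite{p80}; projecting onto the $\ds_A$-component (equivalently, setting $\xi^{1'}=0$) recovers his formula for the Weyl equation. I expect the third step to be the main obstacle: it requires tracking the $q$-dependence of the adapted Newman--Penrose tetrad and of the affine parameter $r$ carefully enough to see the divergence-type terms vanish, whereas once the two identities $\nabla^q o^A=0$ and $\nabla^q_a r=l_a$ are available the rest is substitution.
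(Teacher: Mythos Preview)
Your proof is correct and follows the same route as the paper's (very terse) argument: specialise Theorem \ref{repweyl} to Minkowski space using the identities $k_p(q)=\tfrac{1}{2\pi}$ and $\nabla^q_a r=l_a$ (the paper derives both of these, together with $\nabla^q_a r_0=n_a$, directly from the affine relation $q=p_0+r_0\,l^a+r\,n^a$). Where the paper simply records those identities and stops, you make explicit the two points it leaves implicit --- that Huygens' principle kills the $\tilde V$-integrals, and that the ``gradient'' integrals vanish because $\nabla^{AA'}\!\bigl(\tfrac{1}{2\pi r}o_A\bigr)=-\tfrac{1}{2\pi r^2}\,o^A\overline{o}^{A'}o_A=0$ --- so your write-up is in fact a cleaner justification of the same computation.
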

\begin{remark}
First of all, the meaning in the context of a flat space of the choice of the basis constructed in the previous section should be made precise:
\begin{itemize}
\item the spinor $o^A$ is chosen to be constant on the null generators of the cone; the affine parameter $r_0$ is measured with respect to the vector $l^a=o^A\overline o ^{A'}$;
\item a direction on the cone $\mathcal{C}^+(p_0)$ being given together with a point $q$ in $\mathcal{J}^+(p_0)$, let $p$ be the intersection of $\mathcal{C}^-(q)$ with this direction on the null cone from $p_0$; the spinor $\iota ^A$ is chosen so that $n^a=\iota ^A\overline \iota^{A'} $ is colinear to the vector $\vec{pq}$ and satisfies: $o_A\iota^A=1$; the affine parameter $r$ is measured with respect to the vector $n^a=\iota ^A\overline \iota^{A'}$;
\item the basis is completed by the two vectors $m^a=o^A\overline\iota^{A'}$ and $\overline m^a=\iota^A\overline o^{A'}$.
\end{itemize}
This construction is the "flat" version of the one made using parallel transport.
\end{remark}
\proof: As done in \cite{p80}, it is sufficient to remark, for a direction $\omega$ on the cone $\mathcal{C}^+(p_0)$:
\begin{equation*}
q= p_0+r_0 l^a(\omega)+r n^a(q,\omega),
\end{equation*}
which implies:
\begin{equation*}
\nabla^q r= l^a, \nabla^q r_0= n^a
\end{equation*}
and $k_p(q)=\frac{1}{2\pi}$.\fin
\begin{remark}
It is interesting to note that the term that carries the curvature information in the singular part is:
\begin{equation}\label{imp}
\nabla^q \left(\left(\frac{k_p(q)}{r}\right) o^A\right).
\end{equation}
It is somehow difficult to give a precise geometric interpretation to equation \eqref{imp}. Nevertheless, clues can be found in theorem $4.2.2$ in \cite{Friedlander:1975vn} that states that $\left(k/r\right)^2$ measures the growth rate of the measure $\mu_{\sigma(q)}$.
\end{remark}

\section{Generalization to higher spin}

In this section, we obtain an integral formula for solutions of the Goursat problem for the Dirac equation with arbitrary spin. The derivation of the formula is based on the representation formula for the Weyl equation which can be extracted from theorem \ref{repweyl}.

Let us consider the characteristic Cauchy problem for spin $\frac{n}{2}=s\geq1$ ($n$ being the number of indices of a spinor):
\begin{equation}\label{ccphs}
\left\{
\begin{array}{lcl}
\nabla^{AA'}u_{AB\dots F}&=&0 \text{ on } \mathcal{J}^+(p_0)\\
u_{00\dots 0}&=&\theta_{00\dots 0} \text{ on } \mathcal{C}^+(p_0)
\end{array}
\right.,
\end{equation}
where $u_{AB\dots F}$ satisfies the symmetry conditions:
\begin{equation*}
u_{AB\dots F}=u_{(AB\dots F)}.
\end{equation*}

First of all, it must be noted that, on an arbitrary curved space, the problem \eqref{ccphs} cannot be set if a consistency condition on the conformal curvature is not satisfied (\cite{MR0101150}, \cite{MR1683998} and \cite{n97} for the Rarita-Schwinger case for a treatment of the Cauchy problem). It is known that for the Dirac massless equation for low spin ($n\leq 1$, i.e. scalar wave, Dirac-Weyl and Maxwell equations) this condition is always satisfied. For higher spin, it is satisfied whenever the space-time is conformally flat. Nonetheless, it is expected that the method could be adapted to the Rarita-Schwinger case which requires the space-time to be Ricci flat.

\subsection{Generalization of Dirac equation to higher spin.}\label{generalization}

The construction that was made before for Dirac spinors is adapted here to spinors of higher valence so that the symmetry conditions of the Clifford multiplication and Dirac operator still hold.

Let us consider $\mathbb{E}$ the fibre bundle defined by:
\begin{equation*}
\mathbb{E}=\ds_{AB\dots F}\oplus\ds^{A'}_{\phantom{A'}G\dots I}.
\end{equation*}
This fibre bundle is equipped with the symplectic product obtained from $\eps$:
\begin{equation*}
\eps^{A\overline{A}}\eps^{B\overline{B}}\dots \eps^{F\overline{F}}\oplus\eps_{A'\overline{A}'}\eps^{G\overline{G}}\dots \eps^{I\overline{I}}
\end{equation*}
and a Clifford multiplication by vectors: if $u=\phi_{AB\dots F}+\psi^{A'}_{\phantom{A'}G\dots I}$ belongs to $\mathbb{E}$, we define $e_\textbf{a}\cdot u$, where $(e_\textbf{a})_{\textbf{a}=0,\dots, 3}$ is the basis constructed in subsection \ref{sectionspinor}:
\begin{equation*}
e_\textbf{a}\cdot u=-i\sqrt{2}g^{\textbf{a}AA'}\phi_{AB\dots F} +i\sqrt{2}g^{\textbf{a}}_{\phantom{\textbf{a}}AA'}\psi^{A'}_{\phantom{A'}G\dots I}.
\end{equation*}

We finally define on smooth sections $u=\phi_{AB\dots F}+\psi^{A'}_{\phantom{A'}B\dots F}$ of $\mathbb{E}$ the following operator (that will be denoted by $\D$ as the Dirac operator for Dirac spinors):
\begin{equation}
\D u= i\sqrt{2}\big(-\nabla^{AA'}\phi_{AB\dots F}+\nabla_{AA'}\psi^{A'}_{\phantom{A'}G\dots I}\big).
\end{equation}

The distributions on smooth sections of $\mathbb{E}$ are defined using the (non degenerate) symplectic product $\eps$ in the same way as in section \ref{analyticrequirements}. The duality bracket will still be denoted by $(,)_{\mathcal{D}'(\mathbb{E}),\mathcal{D}(\mathbb{E}))}$. Let $u=\phi_{AB\dots F}+\psi^{A'}_{\phantom{A'}G\dots I}$ and $v=\xi_{AB\dots F}+\zeta^{A'}_{\phantom{A'}G\dots I}$ be two smooth sections of $\mathbb{E}$. We have:
\begin{eqnarray*}
(v,u)_{\mathcal{D}'(\mathbb{E}),\mathcal{D}(\mathbb{E}))}&=&\int_{\Omega}\left(\eps^{A\overline{A}}\eps^{B\overline{B}}\dots \eps^{F\overline{F}}\xi_{AB\dots F}\phi_{\overline{AB\dots F}}+\eps_{A'\overline{A}'}\eps^{G\overline{G}}\dots \eps^{I\overline{I}}\zeta^{A'}_{\phantom{A'}G\dots I}\psi^{\overline{A}'}_{\phantom{\overline{A}'}\overline{G\dots I}}\right)\mu\\
&=&\int_{\Omega}\xi_{A\dots F}\phi^{A\dots F}+\zeta^{A'}_{\phantom{A'}G\dots I}\psi_{A'}^{\phantom{A'}G\dots I}\mu
\end{eqnarray*}

We finally extend the Schr\"odinger-Lichnerowicz formula to arbitrary spin:
\begin{proposition}[Schrödinger-Lichnerowicz formula for arbitrary spin]$\phantom{l}$\\
Let be $\psi_{F\dots I}$ a smooth section of $\ds_{F\dots I}$ (n indices).\\
Then the following formula holds:
\begin{eqnarray*}
\nabla_{BA'}\nabla^{FA'}\psi_{F\dots I}&=&\frac12\nabla_{HH'}\nabla^{HH'} \psi_{B
G\dots I}\\&-&X_{B}^{\phantom{B}F}\phantom{}_{F}^{\phantom{F}D}\psi_{DG\dots I}-X_{B}^{\phantom{B}F}\phantom{}_{G}^{\phantom{G}D}\psi_{FD\dots I}-\dots-X_{B}^{\phantom{B}F}\phantom{}_{I}^{\phantom{I}D}\psi_{FG\dots D}
\end{eqnarray*}
where $X_{ABCD}$ is the curvature spinor:
$$
X_{ABCD}=\frac14R_{AX'B}\phantom{}^{X'}\phantom{}_{CY'D}\phantom{}^{Y'},
$$
$R=R_{abcd}$ being the Riemann curvature tensor of $\Omega$.
\end{proposition}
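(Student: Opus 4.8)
The plan is to run the same argument as in the proof of Proposition~\ref{slf12}, carrying the extra spinor indices $G\dots I$ as passengers and exploiting the fact that the curvature operator $\square_{BE}:=\nabla_{X'(B}\nabla_{E)}{}^{X'}$ acts as a derivation on tensor products of unprimed spinors, being the unprimed part of the commutator $\nabla_{a}\nabla_{b}-\nabla_{b}\nabla_{a}$. First I would raise the contracted index and split the second order operator into its parts symmetric and antisymmetric in the pair of lower spinor derivative indices:
\begin{equation*}
\nabla_{BA'}\nabla^{FA'}\psi_{F\dots I}=\eps^{FE}\nabla_{BA'}\nabla_{E}{}^{A'}\psi_{F\dots I}=\eps^{FE}\Big(\square_{BE}+\tfrac12\eps_{BE}\,\nabla_{DA'}\nabla^{DA'}\Big)\psi_{F\dots I},
\end{equation*}
which is exactly the first two lines of the proof of Proposition~\ref{slf12} with $\phi_{A}$ replaced by $\psi_{F\dots I}$.

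The antisymmetric piece is immediate: the contraction $\eps^{FE}\eps_{BE}$ collapses onto the index $B$, and $\nabla_{DA'}\nabla^{DA'}$ is a mere relabelling of $\nabla_{HH'}\nabla^{HH'}$, so this term contributes $\tfrac12\nabla_{HH'}\nabla^{HH'}\psi_{BG\dots I}$, the first term on the right-hand side of the claimed identity. In contrast with the spin $\tfrac12$ case, I would keep the wave operator here in its raw form and let the scalar curvature reappear below, absorbed into the curvature term.

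The symmetric piece is where the curvature spinor enters. Using that $\square_{BE}$ is a derivation together with the single-index identity $\square_{BE}\kappa_{C}=X_{BEC}{}^{D}\kappa_{D}$ --- the index analogue of (4.9.2)--(4.9.17) of \cite{Penrose:1986fk} already invoked for spin $\tfrac12$ --- I would write
\begin{equation*}
\square_{BE}\psi_{FG\dots I}=X_{BEF}{}^{D}\psi_{DG\dots I}+X_{BEG}{}^{D}\psi_{FD\dots I}+\dots+X_{BEI}{}^{D}\psi_{FG\dots D},
\end{equation*}
and then contract with $\eps^{FE}$. In the first term $\eps^{FE}$ contracts the second and third slots of $X_{BEF}{}^{D}$, producing the middle trace $X_{B}{}^{F}{}_{F}{}^{D}\psi_{DG\dots I}$; in every remaining term $\eps^{FE}$ only raises the second slot of $X$, the raised index being contracted against the passenger index of $\psi$, producing $X_{B}{}^{F}{}_{G}{}^{D}\psi_{FD\dots I},\dots,X_{B}{}^{F}{}_{I}{}^{D}\psi_{FG\dots D}$ after relabelling. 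Adding the two pieces yields the stated formula once the overall sign is pinned down; the symmetries $X_{ABCD}=X_{(AB)CD}=X_{CDAB}$ and the normalisation $X_{ABCD}=\tfrac14 R_{AX'B}{}^{X'}{}_{CY'D}{}^{Y'}$ reduce the sign determination to the already-known case $n=1$, where only the middle-trace term survives and must reproduce the $\tfrac18\,\text{Scal}_{g}$ of Proposition~\ref{slf12}.

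The one genuinely delicate point is making the derivation step precise: one has to be sure that $\square_{BE}$ really distributes over all $n$ indices of the symmetric spinor $\psi_{F\dots I}$ without a leftover symmetrisation term, and then keep careful track of which $\eps^{FE}$-contraction produces the middle trace $X_{B}{}^{F}{}_{F}{}^{D}$ and which produce the mixed contractions $X_{B}{}^{F}{}_{G}{}^{D}$, since these behave quite differently --- the former is pure scalar curvature whereas the latter also carry the conformal (Weyl) part, which is precisely what forces the consistency condition on \eqref{ccphs} at higher spin. Everything else is bookkeeping identical to the spin-$\tfrac12$ computation.
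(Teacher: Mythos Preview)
Your proposal is correct and follows essentially the same route as the paper: raise the index with $\eps^{FE}$, split $\nabla_{BA'}\nabla_{E}{}^{A'}$ into its parts symmetric and antisymmetric in $BE$, identify one piece with $\tfrac12\eps_{BE}\nabla_{HH'}\nabla^{HH'}$ and the other with the curvature operator acting as a derivation on the $n$ unprimed indices (the paper phrases this last step as decomposing $\psi_{F\dots I}$ into sums of tensor products of valence-$\tfrac12$ spinors and applying the single-index identity termwise). Your device of pinning down the overall sign by specialising to $n=1$ and matching Proposition~\ref{slf12} is exactly the right cross-check.
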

\begin{remark} It must be noted that the potential of the operator $\D^2$, though linear, is no longer scalar and not even symmetric.
\end{remark}
\proof: the proof is almost the same as the proof of proposition \ref{slf12}:
\begin{eqnarray*}
\nabla_{BA'}\nabla^{FA'}\psi_{F\dots I}&=&\eps^{FC}\nabla_{BA'}\nabla^{A'}_C\psi_{F\dots I}\\
&=&\eps^{FC}\left(\nabla_{[B|A'}\nabla^{A'}_{|C]}\psi_{F\dots I}+\nabla_{(B|A'}\nabla^{A'}_{|C)}\psi_{F\dots I}\right)\\
&=&\frac{1}{2}\eps^{FC}\nabla_{HH'}\nabla^{HH'}\eps_{BC}\psi_{F\dots I}+ \eps^{FC}\nabla_{[B|A'}\nabla^{A'}_{|C]}\psi_{F\dots I}\\ 
&=&\frac{1}{2}\nabla_{HH'}\nabla^{HH'}\psi_{BG\dots I}+ \eps^{FC}\nabla_{[B|A'}\nabla^{A'}_{|C]}\psi_{F\dots I}
\end{eqnarray*}
The spinor $\psi_{F\dots I}$ is then split as the sum of tensor products of spinors of valence $\frac12$, and, then as explained in \cite{Penrose:1986fk} (vol. 1 p. 245, together with formula (4.9.4), (4.9.5) and (4.9.8)), using the fact:
\begin{equation*}
\nabla_{[B|A'}\nabla^{A'}_{|C]} u_{D}=-X_{BC}^{\phantom{BC}E}\phantom{}_{D}u_{E}
\end{equation*}
for any smooth section of $\ds_{D}$ (formula (4.9.8) in \cite{Penrose:1986fk}), we obtain:
\begin{equation*}
\nabla_{[B|A'}\nabla^{A'}_{|C]}\psi_{F\dots I}=-X_{BCF}^{\phantom{BCF}D}\psi_{DG\dots I}-X_{BCG}^{\phantom{BCG}D}\psi_{FD\dots I}-X_{BCI}^{\phantom{BCI}D}\psi_{FG\dots D}
\end{equation*} 
and finally:
\begin{eqnarray*}
\nabla_{BA'}\nabla^{FA'}\psi_{F\dots I}&=&-\frac{1}{2}\nabla_{HH'}\nabla^{HH'}\psi_{F\dots I}\\
&-&\eps^{FC}\big(X_{BCF}^{\phantom{BCF}D}\psi_{DG\dots I}-X_{BCG}^{\phantom{BCG}D}\psi_{FD\dots I}-X_{BCI}^{\phantom{BCI}D}\psi_{FG\dots D}\big)\\
&=&-\frac{1}{2}\nabla_{HH'}\nabla^{HH'}\psi_{F\dots I}\\ 
&-&X_{B}^{\phantom{B}F}\phantom{}_{F}^{\phantom{F}D}\psi_{DG\dots I}-X_{B}^{\phantom{B}F}\phantom{}_{G}^{\phantom{G}D}\psi_{FD\dots I}-\dots-X_{B}^{\phantom{B}F}\phantom{}_{I}^{\phantom{I}D}\psi_{FG\dots D}\text{\fin}
\end{eqnarray*}
As an obvious consequence of the definitions chosen for the Clifford multiplication and the Dirac operator on $\mathbb{E}$, the following proposition holds:
\begin{proposition}
The Dirac operator $\D$ on $\mathbb{E}$ and the Clifford multiplication by a vector field $v$ on $\Omega$ are respectively symmetric and skew symmetric with respect to the duality bracket $(,)_{\mathcal{D}'(\mathbb{E}),\mathcal{D}(\mathbb{E})}$ that is to say, for any $\phi$ and $\psi$ smooth sections of $\mathbb{E}$ with compact support :
$$
(\phi,\D \psi)_{\mathcal{D}'(\mathbb{E}),\mathcal{D}(\mathbb{E}))}=(\D\phi, \psi)_{\mathcal{D}'(\mathbb{E}),\mathcal{D}(\mathbb{E}))} \text{ and }(\phi,v\cdot\psi)_{\mathcal{D}'(\mathbb{E}),\mathcal{D}(\mathbb{E}))}=-(v\cdot\phi,\psi)_{\mathcal{D}'(\mathbb{E}),\mathcal{D}(\mathbb{E}))}.
$$
\end{proposition}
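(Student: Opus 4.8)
The plan is to reduce both assertions to the valence-$\frac12$ computations of Section~\ref{analyticrequirements}, with the spectator indices $B\dots F$ and $G\dots I$ playing no active part. It is enough to prove the two identities for $\phi$ and $\psi$ smooth sections of $\mathbb{E}$ with compact support; the statement for distributions then follows word for word from Definition~\ref{clifforddiracdist} together with Stokes' theorem, exactly as in the spin-$\frac12$ case.

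\emph{Skew-symmetry of the Clifford multiplication.} Since an arbitrary $v$ is a combination of the frame vectors $e_{\mathbf a}$, it suffices to treat $v=e_{\mathbf a}$. Write $\phi=\xi_{AB\dots F}+\chi^{A'}_{\phantom{A'}G\dots I}$ and $\psi=\rho_{AB\dots F}+\theta^{A'}_{\phantom{A'}G\dots I}$, and expand $(e_{\mathbf a}\cdot\phi,\psi)_{\mathcal{D}'(\mathbb{E}),\mathcal{D}(\mathbb{E})}$ in components against the symplectic form $\eps^{A\overline A}\cdots\eps^{F\overline F}\oplus\eps_{A'\overline A'}\eps^{G\overline G}\cdots\eps^{I\overline I}$. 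By construction $e_{\mathbf a}$ acts only on the leading $A$ (resp.\ $A'$) slot, so every resulting term is, up to the inert factor of spectator epsilons $\eps^{B\overline B}\cdots\eps^{F\overline F}$ (resp.\ $\eps^{G\overline G}\cdots\eps^{I\overline I}$), one of the terms already treated in the proof that $(V\cdot\Phi,\Psi)=-(\Phi,V\cdot\Psi)$ for Dirac spinors. The same identity $\eps^{AB}g^{\mathbf a}_{\phantom{a}AA'}=\eps_{A'B'}g^{\mathbf a BB'}$ moves one $\eps$ through the Infeld--van der Waerden symbol and interchanges the two summands of $\mathbb{E}$, producing the overall minus sign. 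Hence $(v\cdot\phi,\psi)_{\mathcal{D}'(\mathbb{E}),\mathcal{D}(\mathbb{E})}=-(\phi,v\cdot\psi)_{\mathcal{D}'(\mathbb{E}),\mathcal{D}(\mathbb{E})}$ for compactly supported $\phi,\psi$.

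\emph{Symmetry of $\D$.} Here I would copy the argument of Lemma~\ref{lemprau}. Fix $p\in\Omega$ and an orthonormal frame $(f_i)$ with $\nabla_{f_i}f_i=0$ at $p$. Using the skew-symmetry of the Clifford multiplication just obtained, and the compatibility of $\nabla$ with the symplectic product on $\mathbb{E}$ --- which holds because that product is a tensor power of copies of the parallel spinor form $\eps$ --- one finds, at $p$,
\[
(\D\phi,\psi)=\sum_i(f_i\cdot\nabla_{f_i}\phi,\psi)=-\sum_i(\nabla_{f_i}\phi,f_i\cdot\psi)=(\phi,\D\psi)-\textrm{div}\,V,
\]
with $V=\sum_i\mathfrak{f}_i(\phi,f_i\cdot\psi)f_i$, exactly as in Remark~\ref{divergence}. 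Integrating over $\Omega$ and applying Stokes' theorem kills $\int_\Omega\textrm{div}\,V\,\mu$, because $\phi$ and $\psi$ have compact support, which yields $(\D\phi,\psi)_{\mathcal{D}'(\mathbb{E}),\mathcal{D}(\mathbb{E})}=(\phi,\D\psi)_{\mathcal{D}'(\mathbb{E}),\mathcal{D}(\mathbb{E})}$. One may equally well run the abstract-index version of this computation as in Lemma~\ref{lemprau}, where the remainder is manifestly a divergence.

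The computation is in the end pure bookkeeping, so I do not expect a real obstacle; the one point to keep in mind is that both $v\cdot$ and $\D$ touch only the leading $A$ (or $A'$) index of a section of $\mathbb{E}$, so that the raising/lowering manipulations and the single sign flip are literally those of the valence-$\frac12$ case, and in particular the (non-symmetric) curvature potential occurring in the Schr\"odinger--Lichnerowicz formula for arbitrary spin above is irrelevant here, since the statement concerns the first-order operator $\D$ and not $\D^2$.
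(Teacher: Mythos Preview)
Your proposal is correct and follows exactly the approach the paper has in mind: the paper's own ``proof'' is the single sentence ``As an obvious consequence of the definitions chosen for the Clifford multiplication and the Dirac operator on $\mathbb{E}$, the following proposition holds,'' and your argument spells out precisely why --- the operators touch only the leading $A$/$A'$ index, so the spin-$\frac12$ computations of Lemma~\ref{lemprau} and the lemma preceding it go through with inert spectator $\eps$-factors. There is nothing to add.
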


 All the methods that were developed for Dirac spinors can be used here, provided that we assume that we are working with $\mathbb{E}$-valued distributions. The structure of the fundamental solutions for the wave equation are the same:
\begin{equation*}
\tilde G^\pm_q(p)=\kappa^\pm_q(p)\tau_p(q)\delta(\Gamma_q(p))+V_q(p)H^\pm_q(p).
\end{equation*}
where $\tilde G^\pm$ is a bidistribution in $\mathcal{E}(\E)\boxtimes \mathcal{D}'(\E)$ which satisfies the wave equation:
\begin{equation*}
(\D^p)^2G^\pm_q(p)=\overline{\delta}_p(q),
\end{equation*}
$\overline{\delta}_p(q)$ being the Dirac mass in $p$. The application $\tau_p(q)$ satisfies the equation:
\begin{equation}\label{identite2}
(\tau_p(p),\phi)=\phi \text{ and } \nabla_i^q \Gamma_p(q)\nabla^i \tau_p(q)=0.
\end{equation}
\begin{remark} The functions $\tau$ and $V$ are more complex to write and we do not even try to do so, since the properties given by the equations (\ref{identite2}) are sufficient to conclude.
\end{remark}

A direct consequence of the previous remark is the following proposition:
\begin{proposition}\label{higherspinfundsol}
The Dirac operator $\D$ acting on sections of the fibre bundle $\E$ admits two fundamental solutions $G^\pm_p(q)=\D^q \tilde G^\pm_p(q)$, in $\mathcal{D}'(\E)\boxtimes \mathcal{D}'(\E)$, with, respectively, support in $\mathcal{C}^\pm(p)$, for any given $p$, which satisfy, in the sense of distributions:
\begin{equation*}
\D^q G^\pm_p(q)=\overline{\delta}_p(q).
\end{equation*}
\end{proposition}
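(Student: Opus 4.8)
The plan is to obtain the statement as an immediate consequence of the fundamental-solution theorem for the spinorial wave operator $\D^2$ on sections of $\E$, which, as the remark preceding the statement records, is produced by the Friedlander--Leray construction of Section~\ref{analyticrequirements} carried over verbatim from the spin-$\tfrac12$ case. Concretely, one has at one's disposal the bidistribution
\[
\tilde G^\pm_p(q)=\kappa^\pm_p(q)\,\tau_p(q)\,\delta(\Gamma_p(q))+V_p(q)\,H^\pm_p(q)
\]
in $\mathcal{E}(\E)\boxtimes\mathcal{D}'(\E)$ satisfying $(\D^q)^2\tilde G^\pm_p(q)=\overline{\delta}_p(q)$ in the sense of $\E$-valued distributions, together with the fact --- the proposition just above --- that $\D$ and the Clifford multiplication on $\E$ are respectively symmetric and skew-symmetric for the symplectic pairing $(\,\cdot\,,\cdot\,)$. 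Granting all this, one sets $G^\pm_p(q):=\D^q\tilde G^\pm_p(q)$ and checks three things: that it is a well-defined object, that it solves $\D^q G^\pm_p(q)=\overline{\delta}_p(q)$, and that it has the stated support.

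Well-definedness is clear: $\D^q$ is a first order differential operator with smooth coefficients acting in the variable $q$ and $\tilde G^\pm_p\in\mathcal{D}'(\E)\boxtimes\mathcal{D}'(\E)$, so $G^\pm_p$ is again an $\E$-valued bidistribution, and with $p$ fixed it lies in $\mathcal{D}'(\E)$. For the equation, unwinding the distributional definition of $\D$ on $\E$ (the analogue of Definition~\ref{clifforddiracdist}) one has, for every $\phi\in\mathcal{D}(\E)$,
\begin{align*}
(\D^q G^\pm_p(q),\phi)_q
&=(G^\pm_p(q),\D^q\phi)_q
=(\D^q\tilde G^\pm_p(q),\D^q\phi)_q\\
&=(\tilde G^\pm_p(q),(\D^q)^2\phi)_q
=((\D^q)^2\tilde G^\pm_p(q),\phi)_q
=(\overline{\delta}_p(q),\phi)_q,
\end{align*}
where the first and third equalities are the definition of $\D^q$ acting on a distribution, the second is the definition of $G^\pm_p$, the fourth uses that $(\D^q)^2$ is symmetric for $(\,\cdot\,,\cdot\,)$ (because $\D^q$ is), and the last is the wave-equation property of $\tilde G^\pm_p$. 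Hence $\D^q G^\pm_p(q)=\overline{\delta}_p(q)$. For the support, a differential operator does not enlarge supports, so $\operatorname{supp}G^\pm_p\subset\operatorname{supp}\tilde G^\pm_p$, and the latter is controlled by the supports of $\delta(\Gamma_p(\cdot))$ and $H^\pm_p$ exactly as in the fundamental-solution theorems of Section~\ref{analyticrequirements}, yielding the claimed containment in $\mathcal{C}^\pm(p)$.

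The substantive content sits upstream of this short argument, and it would be prudent to confirm it is genuinely in place: one must check that the Friedlander--Leray scheme does go through on $\E$ at all. This rests on the generalized Schr\"odinger--Lichnerowicz formula of Subsection~\ref{generalization}, which shows that $\D^2$ on $\E$ equals $\square=-\nabla_j\nabla^j$ plus a purely zeroth order term built from the curvature spinor $X$; since that term, although no longer scalar or even symmetric in its indices, is still a smooth bundle endomorphism, the Leray transport equations for $\kappa$, $\tau$ and $V$, and the characteristic Cauchy problem determining $V$, retain their form and the construction of $\tilde G^\pm_p$ proceeds exactly as in \cite{Friedlander:1975vn}. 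The only remaining care is the by-now familiar bookkeeping: the divergence (boundary) terms produced when $\D^q$ is moved across the symplectic product, as in Lemma~\ref{lemprau} and Proposition~\ref{derivation}, must vanish against compactly supported test sections and against the Leray measure on the level sets $\{\Gamma_p(q)=\mathrm{const}\}$, but these computations are word for word those already done for $\ds_{Dirac}$. I therefore expect no new obstacle; the proposition is entirely inherited from the $\E$-analogues of the tools established earlier.
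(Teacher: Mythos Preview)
Your proof is correct and follows precisely the route the paper intends: the proposition is stated there as ``a direct consequence'' of the preceding material (the Friedlander construction of $\tilde G^\pm$ on $\E$ via the generalized Schr\"odinger--Lichnerowicz formula, together with the symmetry of $\D$ for the symplectic pairing), and you have simply spelled out the short verification that $\D^q(\D^q\tilde G^\pm_p)=(\D^q)^2\tilde G^\pm_p=\overline{\delta}_p$ which the paper leaves implicit. Your additional paragraph confirming that the upstream Friedlander--Leray machinery goes through on $\E$ because the zeroth-order curvature term is a smooth bundle endomorphism is exactly the point the paper's remark is making.
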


Finally, we present the compacted spin coefficient formalism introduced by Penrose and Rindler in \cite{Penrose:1986fk}. Let $o^A, \iota^A$ be a given normalized spinor basis and consider the rescaling, for $\lambda$ in $\C$:
\begin{equation}\label{rescaling}
o^A\longmapsto\lambda o^A, \iota^A\longmapsto \frac{\iota^A}{\lambda} .
\end{equation}
\begin{definition}
A spinor $\phi$ is said to be of weight $(p,q)$ if and only if,  
under the transformation \eqref{rescaling}, $\phi$ is rescaled as:
$$
\phi\longmapsto \lambda^p\overline{\lambda}^q \phi
$$
The integer $\frac12(p-q)$ is the spin-weight of $\phi$ and $\frac12(p+q)$ is its boost-weight.
\end{definition}
We consider the Newman-Penrose tetrad $(l,n,m,\overline{m})$ associated with $o^A, \iota^A$. We define the differential operators with regard to these weights: let $\phi$ be a $(p,q)$ spinor. We define:
\begin{equation*}
\begin{array}{clc}
\mathfrak{p} \phi&=&\nabla_l\phi-p\epsilon\phi-q\overline\epsilon \phi\\
\eth' \phi&=&\nabla_{\overline{m}}\phi -p\alpha \phi+q\overline{\alpha}\phi
\end{array}
\end{equation*}
where $\epsilon=\iota^A\nabla_l o_A$ and $\alpha=\iota^A\nabla_{\overline{m}}o_A$.

Though the formalism of the Newman-Penrose tetrad will still be used, the usual notations $o^A, \iota^A$ for the basis spin basis are replaced by $\eps^A_0, \eps^A_1$. All the calculations will be performed using these notations. We must recall what is the link between these two notations: the spinor basis $(o^A, \iota ^A)$ is rewritten $(\eps^A_0, \eps^A_1)$, so that the dual basis is $(\eps_A^0, \eps_A^1)$ with $\eps_A^0=-\iota_A$ et $\eps_A ^1=o^A$. In this formalism, the spinors $\eps_A^\textbf{I}$ satisfy:
\begin{equation*}
\eps_A^\mathbf{J}\eps^A_\mathbf{I}=\delta^\mathbf{J}_\mathbf{I}.
\end{equation*}

Let now consider the field equation for spin $\frac{n}{2}$:
\begin{equation*}
\nabla^{AA'}\phi_{AB\dots F}=0
\end{equation*}
for a symmetric field $\phi_{AB\dots F}=\phi_{(AB\dots F)}$ with n indices; for $j$ in $\{0,1,\dots, n\}$, we define:
\begin{eqnarray*}
\phi_j&=&\underbrace{\eps^A_0\dots\eps^C_0}_{\text{n-j times}}\underbrace{\eps^D_1\dots\eps^F_1}_{\text{j times}}\phi_{AB \dots F}\\
&=&\underbrace{o^A\dots o^C}_{\text{n-j times}}\underbrace{\iota^D\dots\iota^F}_{\text{j times}}\phi_{AB \dots F}
\end{eqnarray*}
which are the only relevant components to calculate the field $\phi_{AB\dots F}$ wich can be written, because of its symmetry:
\begin{eqnarray*}
\phi_{A\dots F}&=&\sum_{j=0}^n \binom{n}{j}\phi_j\underbrace{\eps_{(A}^0\dots\eps_C^0}_{\text{n-j times}}\underbrace{\eps_D^1\dots\eps_{F)}^1}_{\text{j times}}\\
&=&\sum_{j=0}^n (-1)^{n-j}\binom{n}{j}\phi_j \underbrace{\iota_{(A}\dots \iota_C}_{\text{n-j times}}\underbrace{o_D\dots o_{F)}}_{\text{j times}}
\end{eqnarray*}
The quantity $\phi_j$ is a $(n-2r,0)$ scalar field. It is known to satisfy the following lemma (see \cite{Penrose:1986fk}, 4.12.42):
\begin{lemma}\label{scalarrelation}
Let $j$ be an integer in $\{2,\dots,n-1\}$.\\  Then $\phi_{j+1}$, $\phi_j$, $\phi_{j-1}$ and $\phi_{j-2}$ satisfy the following relation:
$$
\mathfrak{p}\phi_j-\eth'\phi_{j-1}=(j-1)\sigma'\phi_{j-2}-j  \tau '\phi_{j-1}+(n-j-1)\rho \phi_j-(n-j)\kappa\phi_{j+1}.
$$
\end{lemma}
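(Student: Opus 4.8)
The statement is formula (4.12.42) of \cite{Penrose:1986fk}, and the plan is to reprove it from the massless field equation $\nabla^{AA'}\phi_{AB\dots F}=0$ by contracting with the spin dyad and reading the result in the compacted spin-coefficient calculus. Since $\phi_{AB\dots F}$ is totally symmetric, every contraction of $\nabla^{AA'}\phi_{AB\dots F}$ against dyad spinors is a weighted scalar, and the lemma is one particular such contraction. First I would project the free primed index $A'$ onto $\overline o_{A'}$: this is the right choice because the two null directions $o^A\overline o^{A'}=l^a$ and $\iota^A\overline o^{A'}=\overline m^a$ are exactly the ones carried by the operators $\mathfrak p$ and $\eth'$ appearing in the statement. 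The contracted unprimed index $A$ of $\nabla^{AA'}\phi_{AB\dots F}$ is then split through the completeness relation $\delta_A^{\,X}=o_A\iota^X-\iota_A o^X$, so that the $o_A$-part is paired with $\iota^X\phi_{XB\dots F}$ and will feed an $l$-derivative while the $\iota_A$-part is paired with $o^X\phi_{XB\dots F}$ and will feed an $\overline m$-derivative; the remaining $n-1$ free indices $B\dots F$ are contracted with the symmetrised product of $n-j$ copies of $o$ and $j-1$ copies of $\iota$, which singles out the components $\phi_{j-2},\phi_{j-1},\phi_j,\phi_{j+1}$.

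The heart of the computation is the Leibniz rule for $\nabla^{AA'}$. The term in which the derivative falls on the scalar component produces $\nabla_l\phi_j$ from the $o_A$-part and $-\nabla_{\overline m}\phi_{j-1}$ from the $\iota_A$-part; combined with the terms in which $\nabla$ hits a dyad spinor but only alters its boost and spin weight — the pieces carrying $\epsilon,\overline\epsilon,\alpha,\overline\alpha$ — these assemble into $\mathfrak p\phi_j-\eth'\phi_{j-1}$, by the very definition of $\mathfrak p$ and $\eth'$ on weighted quantities recalled just before the statement. The remaining terms, in which $\nabla$ turns one dyad element into the other, are evaluated with the standard derivative table for a normalised dyad,
$$\mathfrak p\, o^A=-\kappa\,\iota^A,\qquad \eth'\, o^A=-\rho\,\iota^A,\qquad \mathfrak p\,\iota^A=-\tau'\,o^A,\qquad \eth'\,\iota^A=-\sigma'\,o^A$$
(see \cite{Penrose:1986fk}, section 4.12). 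Counting, in each symmetrised product, how many factors $o$ and $\iota$ can be struck, together with the spinor sitting on the contracted slot $X$, produces the four numerical multiplicities of the statement: the $j-1$ inner $\iota$-factors give $(j-1)\sigma'\phi_{j-2}$; the $\iota$ on the $X$-slot together with the inner $\iota$-factors gives $-j\tau'\phi_{j-1}$; the $n-j$ inner $o$-factors give $-(n-j)\kappa\phi_{j+1}$; and the $o$-factors, once corrected by the "divergence of the dyad" contributions coming from $\nabla$ acting on $o_A,\iota_A,\overline o_{A'}$ in the projection itself, give $(n-j-1)\rho\phi_j$.

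The one genuinely delicate point — and the only place where real care is needed — is that last accounting: one must verify that the pieces generated by $\nabla$ acting on the projection spinors split correctly, with the $\epsilon$- and $\alpha$-type pieces being precisely those absorbed into $\mathfrak p$ and $\eth'$ (so that no stray spin coefficient survives outside the compacted operators) and the remainder supplying exactly the shift that turns the naive coefficient $n-j$ of $\rho\phi_j$ into $n-j-1$. The cheap check that keeps the bookkeeping honest is the weight count of the compacted formalism: every term in the identity has boost/spin weight $(n-2j+1,1)$, which already fixes which of $\phi_{j-2},\dots,\phi_{j+1}$ and which of the weighted spin coefficients $\sigma',\tau',\rho,\kappa$ may occur, leaving only the integers to be pinned down. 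Finally, the hypothesis $j\in\{2,\dots,n-1\}$ is exactly what guarantees that $\phi_{j-2},\phi_{j-1},\phi_j,\phi_{j+1}$ are all defined, and since the construction of subsection \ref{generalization} makes $\D$ symmetric on sections of $\mathbb{E}$, the argument goes through verbatim for every spin $\frac{n}{2}\ge 1$.
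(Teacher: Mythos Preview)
The paper does not give its own proof of this lemma: it simply cites Penrose--Rindler \cite{Penrose:1986fk}, formula (4.12.42), with the phrase ``It is known to satisfy the following lemma (see \cite{Penrose:1986fk}, 4.12.42)''. Your proposal is therefore not a different route from the paper's argument but rather a reconstruction of the derivation that the paper takes for granted.

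Your sketch is essentially the standard derivation one finds in section 4.12 of \cite{Penrose:1986fk}: project the field equation $\nabla^{AA'}\phi_{AB\dots F}=0$ with $\overline o_{A'}$ on the primed index, split the contracted unprimed index via $\delta_A^{\,X}=o_A\iota^X-\iota_A o^X$, saturate the remaining $n-1$ indices with the appropriate symmetrised product of $o$'s and $\iota$'s, and use the compacted dyad derivatives $\mathfrak p\,o^A=-\kappa\iota^A$, $\eth'o^A=-\rho\iota^A$, $\mathfrak p\,\iota^A=-\tau'o^A$, $\eth'\iota^A=-\sigma'o^A$. The weight check you give is the correct sanity test, and you are right that the only delicate step is the bookkeeping that turns the naive $(n-j)\rho\phi_j$ into $(n-j-1)\rho\phi_j$ after the $\epsilon$- and $\alpha$-type pieces are absorbed into $\mathfrak p$ and $\eth'$. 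One minor remark: your final sentence about the symmetry of $\D$ on $\mathbb{E}$ is irrelevant to this lemma, which is a purely algebraic identity at the level of the spin-coefficient decomposition of the field equation and does not use the functional-analytic setup of subsection \ref{generalization}.
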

\begin{remark} This is the more accurate way to write down the constraints equations on the cone, since the restriction to the tangential derivatives is obvious. 
\end{remark}

We conclude this section by giving the following relation between weighted scalars and differential forms (see \cite{Penrose:1986fk}, 4.14.70):
\begin{proposition}\label{formediff}
Let $\Sigma$ be a two dimensional spacelike closed surface with volume form $\mu_{\Sigma}$ and $\alpha$ a  $(1,-1)$ weighted spinor.\\
Then the integral of $\eth'\alpha$ over $\Sigma$ vanishes:
$$
\int_{\Sigma}\eth'\alpha\mu_{\Sigma}=0
$$
\end{proposition}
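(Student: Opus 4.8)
The plan is to realise $\eth'\alpha\,\mu_\Sigma$ as an exact $2$-form on $\Sigma$ and then invoke Stokes' theorem, the point being that the specific weight $(1,-1)$ of $\alpha$ is exactly what makes the construction go through. I would begin with two preliminary remarks. Since $m^a=o^A\overline\iota^{A'}$ is rescaled by $\lambda\overline\lambda^{-1}$ under \eqref{rescaling}, the coframe element $M$ dual to $(l,n,m,\overline m)$ has weight $(-1,1)$, so the complex $1$-form $\beta:=\alpha\,M$, restricted to $\Sigma$, has weight $(0,0)$; although neither $\alpha$ nor $M$ is defined independently of a choice of spin frame adapted to $\Sigma$ (and such a frame need not exist globally), the product $\beta$ is frame-independent and hence a globally defined smooth complex $1$-form on $\Sigma$. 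Likewise $\eth'\alpha$, obtained by applying the weight $(-1,1)$ operator $\eth'$ to a weight $(1,-1)$ scalar, is a genuine weight $(0,0)$ function on $\Sigma$, so the integrand in the statement is well defined.

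The core of the argument is the pointwise identity $\mathrm{d}\beta\big|_\Sigma=-2i\,(\eth'\alpha)\,\mu_\Sigma$. To obtain it I would work in a local spin frame, writing $\mathrm{d}\beta=\mathrm{d}\alpha\wedge M+\alpha\,\mathrm{d}M$; expanding in the coframe $(M,\overline M)$ of $\Sigma$ one has $\mathrm{d}\alpha|_\Sigma=(m\alpha)M+(\overline m\alpha)\overline M$, so $\mathrm{d}\alpha\wedge M|_\Sigma=(\overline m\alpha)\,\overline M\wedge M$, while the Cartan structure equation for the Newman--Penrose coframe restricted to the two-surface has the form $\mathrm{d}M|_\Sigma=c\,M\wedge\overline M$ for a coefficient $c$ assembled from the spin coefficients attached to $(m,\overline m)$ along $\Sigma$. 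The key fact is that this $c$ is precisely the weight correction $-p\,\alpha_{\mathrm{sc}}+q\,\overline{\alpha_{\mathrm{sc}}}$ built into $\eth'$ at $(p,q)=(1,-1)$ --- this is exactly the compacted spin-coefficient bookkeeping of \cite{Penrose:1986fk}, section~4.14 --- so that $\overline m\alpha-\alpha c=\eth'\alpha$; combined with $\overline M\wedge M=-2i\,\mu_\Sigma$, which follows from $\mu_\Sigma=\tfrac{1}{2i}M\wedge\overline M$, this gives the displayed identity. Since $\Sigma$ is compact without boundary, Stokes' theorem then yields $\int_\Sigma\mathrm{d}\beta=0$, and therefore $\int_\Sigma\eth'\alpha\,\mu_\Sigma=\tfrac{-1}{2i}\int_\Sigma\mathrm{d}\beta=0$.

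I expect the main obstacle to be precisely this identity: one has to verify that the connection coefficient $c$ produced by $\mathrm{d}M|_\Sigma$ coincides with the weight correction carried by $\eth'$, so that the spurious spin-coefficient terms cancel instead of surviving. It is exactly this cancellation that singles out the weight $(1,-1)$: for any other weight an uncancelled term proportional to $\alpha$ would remain, and the integral would generically fail to vanish. A slightly less computational route is to pass to the real tangent vector field $V=\alpha\,\overline m+\overline\alpha\,m$ on $\Sigma$ and identify $\mathrm{div}_\Sigma V$ with $\eth'\alpha+\eth\overline\alpha$; the divergence theorem kills $\int_\Sigma\mathrm{div}_\Sigma V\,\mu_\Sigma$, but since $\overline{\eth'\alpha}=\eth\overline\alpha$ this only establishes $\mathrm{Re}\int_\Sigma\eth'\alpha\,\mu_\Sigma=0$, so one would still need the exact-form computation (or a companion argument for the imaginary part) to conclude. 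I would therefore carry out the computation of $\mathrm{d}\beta$ directly.
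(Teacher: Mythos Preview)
Your argument is correct and is precisely the standard proof: the paper does not actually supply its own demonstration of this proposition but simply invokes \cite{Penrose:1986fk}, 4.14.70, where the result is established by exactly the mechanism you describe --- the $(1,-1)$ weight of $\alpha$ makes $\alpha\,M$ a well-defined global $1$-form on $\Sigma$, and the Cartan structure equation for $M$ restricted to $\Sigma$ produces exactly the spin-coefficient correction that turns $\nabla_{\overline m}\alpha$ into $\eth'\alpha$, so that $\eth'\alpha\,\mu_\Sigma$ is exact and Stokes applies. Your identification of the key cancellation (the coefficient $c$ in $\mathrm{d}M|_\Sigma$ matching the weight term in $\eth'$) and your caveat about the divergence-theorem variant only yielding the real part are both on the mark.
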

\subsection{Integral formula for spin $\frac{n}{2}$}

Let us consider the future characteristic Cauchy problem for the Dirac operator on $\E$:

\begin{equation}\label{ccphs2}
\left\{
\begin{array}{lcl}
\D u &=&0 \text{ on } \mathcal{J}^+(p_0)\\
u&=&\theta \text{ on } \mathcal{C}^+(p_0)
\end{array}
\right.,
\end{equation}
where $\theta$ is a smooth compactly supported function on the cone $ \mathcal{sC}^+(p_0)$. It must be noted that the problem (\ref{ccphs2}), contrary to the problem stated in (\ref{ccphs}), does not contain symmetry assumption. This assumption will be made afterwards to obtain the integral formula for (\ref{ccphs}).

By doing the same calculation as for proposition (\ref{representationformula}), a direct consequence of proposition (\ref{higherspinfundsol}) is the following integral formula:
\begin{proposition}\label{formuleintcomplete}
Let $u$ be a solution of (\ref{ccphs2}) in $\mathbb{E}$. 
Then $u$ can be written:
\begin{gather*}
u(q)=
\D^q\left(\int_{\sigma(q)}\left(\hat \nabla^p \Gamma_0\cdot u,\tilde U_p\right)_p\displaystyle{\frac{1}{4r_0r}\mu_{\sigma(q)}}\right)+\int_{\sigma(q)}\hat \nabla^qr_{0}\cdot  \big(\nabla^p \Gamma_{0}\cdot u,\tilde V_{p}(q)\big) 2r  \mu_{\sigma(q)}(p)\\+\int_{\mathcal{D}(q)}(\hat \nabla^p \Gamma_0 \cdot u,\D^q \tilde V_q)\mu_{\Gamma_0}(p).
\end{gather*}
\end{proposition}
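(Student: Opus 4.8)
\proof: The proof consists in replaying, \emph{mutatis mutandis}, the argument of Lemma \ref{lemme551}, Theorem \ref{representationformula} and Proposition \ref{repintegral} in the $\mathbb{E}$-valued framework. Every analytic ingredient used there has an exact counterpart here: the symplectic product $\eps$ on $\mathbb{E}$ is non-degenerate, so distributions on $\mathcal{D}(\mathbb{E})$ and the operations $X\cdot u$, $\D u$ and $f(S)U$ are defined as in Section \ref{analyticrequirements}; the Dirac operator $\D$ is symmetric and Clifford multiplication skew-symmetric for $(,)_{\mathcal{D}'(\mathbb{E}),\mathcal{D}(\mathbb{E})}$ (Subsection \ref{generalization}); and, by Proposition \ref{higherspinfundsol}, the forward fundamental solution $G^+_p(q)=\D^q\tilde G^+_p(q)=\D^q\big(\kappa^+_p(q)\tau_p(q)\delta(\Gamma_p(q))+\tilde V_p(q)H^+(\Gamma_p(q))\big)$ exists and is supported in $\mathcal{C}^+(p)$.

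First I would reprove the $\mathbb{E}$-valued analogue of Lemma \ref{lemme551}: for $f$ with future-bounded support, the distribution $(u,\phi)_p:=(f,(G^+_p,\phi)_q)_p$ solves $\D u=f$. The formal chain $(\D^pu,\phi)_p=(u,\D^p\phi)_p=(f,(G^+_p,\D^q\phi)_q)_p=(f,\phi)$ uses only the symmetry of $\D$ on $\mathbb{E}$ and the defining equation of $G^+_p$; it is legitimate because $p\mapsto(\tilde G^+_p,\D^q\phi)_q$ is a smooth function with support in $\bigcup_{q\in\text{Supp}\,\phi}\mathcal{I}^+(q)$. This is verified exactly as in the proof of Lemma \ref{lemme551}: write the $p$-dependence of the integral against the Leray measure $\mu_{\Gamma_p}$ in a Minkowski-adapted chart $x=h(y,\xi)$ and differentiate under the integral sign; since $\tilde G^+$ still has the form (smooth coefficient)$\,\delta(\Gamma)\,+\,$(smooth coefficient)$\,H(\Gamma)$, the estimates are the same — only the fibre has changed and they do not see its rank.

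Then I would apply this with $f=\D^p\big(uH^+(\Gamma_0)\big)$. By Proposition \ref{derivation}, whose proof carries over verbatim on $\mathbb{E}$ (it rests only on the divergence identity of Lemma \ref{lemprau} and the symmetry properties quoted above), together with the regularization $H^+(\Gamma_0-\eps)$, $\eps\downarrow 0$, of Remark \ref{derivdistrib}, one gets $\D^p\big(uH^+(\Gamma_0)\big)=(\D^pu)H^+(\Gamma_0)+\hat\nabla^p\Gamma_0\cdot u\,\delta^+(\Gamma_0)$. Since $\D u=0$ on $\mathcal{J}^+(p_0)$ the bulk term vanishes, so that $uH^+(\Gamma_0)=\big(\D^p(uH^+(\Gamma_0)),(\tilde G_p,\D^q\phi)_q\big)_p$ reduces — after Fubini interchanges the $p$- and $q$-integrations as in the proof of Theorem \ref{representationformula} — to a sum of two integrals carried by $\mathcal{C}^+(p_0)$: one over $\sigma(q)$ against $\tilde U$ and the Leray two-form $\mu_{\Gamma_0,\Gamma_q}$, one over $\mathcal{D}(q)$ against $\tilde V$ and $\mu_{\Gamma_0}$, precisely as in Proposition \ref{repintegral}.

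Finally, substitute the explicit Leray forms of Proposition \ref{expmeasures}, notably $\mu_{\Gamma_0,\Gamma_q}=\frac{1}{4r_0r}\mu_{\sigma(q)}$, and use Proposition \ref{diracderivation2} to pull the outer $\D^q$ through the $\mathcal{D}(q)$-integral: this leaves a bulk term on $\mathcal{D}(q)$ in which $\D^q$ now acts on $\tilde V$, and produces the boundary term over $\sigma(q)$ that appears as the second term of the statement, the $\tilde U$-integral remaining under the outer $\D^q$. Assembling the three pieces gives the announced formula. I do not expect any genuinely new obstacle: the two delicate steps — the $\eps\to 0$ passage in the differentiation of $uH^+(\Gamma_0)$ and the continuity estimates making the nested duality brackets meaningful — are precisely the ones already settled for Dirac spinors, where no use was made of the fibre being $\ds_{Dirac}$ rather than $\mathbb{E}$; the real content here was the construction in Subsection \ref{generalization} of an $\mathbb{E}$-valued distribution calculus for which $\D$ is symmetric. \fin
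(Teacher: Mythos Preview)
Your proposal is correct and follows the same route as the paper: the paper itself derives Proposition \ref{formuleintcomplete} by ``doing the same calculation as for proposition (\ref{representationformula})'' with the $\mathbb{E}$-valued fundamental solution of Proposition \ref{higherspinfundsol}, and then --- exactly as in the first lines of the proof of Theorem \ref{repweyl} --- inserts the Leray forms of Proposition \ref{expmeasures} and applies Proposition \ref{diracderivation2} to push $\D^q$ through the $\mathcal{D}(q)$-integral. Your outline reproduces precisely these steps and correctly identifies that nothing in Lemma \ref{lemme551}, Theorem \ref{representationformula}, Proposition \ref{derivation} or Lemma \ref{lemprau} depends on the fibre being $\ds_{Dirac}$ rather than $\mathbb{E}$, once the symmetry of $\D$ and skew-symmetry of Clifford multiplication on $\mathbb{E}$ have been established in Subsection \ref{generalization}.
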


The formula must now be simplified using the previous methods and a decomposition of the spinor $u$ on the same basis as in subsection \ref{geometricdataonthecone}: $u$ can be written:
\begin{equation*}
u=\phi_{\textbf{A}\dots \textbf{F}}\eps^{\textbf{A}}_A\dots \eps^{\textbf{F}}_F+\psi^{\textbf{A}'}_{\phantom{A'} \textbf{F}\dots\textbf{F}}\eps^{A'}_{\textbf{A}'}\eps^{\textbf{B}}_{B}\dots\eps^{\textbf{F}}_{F}.
\end{equation*}

The solution of 
\begin{equation}\label{ccphs6}
\left\{
\begin{array}{lcl}
\nabla^{AA'}u_{AB\dots F}&=&0 \text{ on } \mathcal{J}^+(p_0)\\
u_{AB\dots F}&=&\theta_{AB\dots F} \text{ on } \mathcal{C}^+(p_0)
\end{array}
\right.,
\end{equation}
obtained by projecting on $\ds_{A\dots F}$ the integral formula given in proposition \ref{formuleintcomplete}:
\begin{proposition}
Let $u_{A\dots F}$ be a solution of:
\begin{equation*}\label{ccphs4}
\left\{
\begin{array}{lcl}
\nabla^{AA'}u_{AB\dots F}&=&0 \text{ on } \mathcal{J}^+(p_0)\\
u_{AB\dots F}&=&\theta_{AB\dots F} \text{ on } \mathcal{C}^+(p_0)
\end{array}
\right.,
\end{equation*}
Then, $u_{A\dots F}$ can be written:
\begin{gather*}
u_{A\dots F}=\int_{\sigma(q)}\left(\frac{k_q(p)}{r}\right)\big(\nabla^p_l\phi_{0\textbf{B}\dots \textbf{F}}(p)-2\rho\phi_{0\textbf{B}\dots \textbf{F}}(p)\big)\big(\nabla^q_{AA'}r_0\big)\eps^{A'}_0(q)\eps^{\textbf{B}}_B(q)\dots\eps^{\textbf{F}}_F(q)\mu_{\sigma(q)}\\
+\int_{\sigma(q)}\phi_{0\textbf{B}\dots \textbf{F}}(p)\nabla_{AA'}\left(\left(\frac{k_{p}(q)}{r}\right)\eps^{A'}_0(q)\eps^{\textbf{B}}_B(q)\dots\eps^{\textbf{F}}_F(q)\right)\mu_{\sigma(q)}\\
+\int_{\sigma(q)}\hat \nabla^qr_{0}\cdot  \big(\nabla^p \Gamma_{0}\cdot u,\tilde V_{p}(q)\big)  \frac{\mu_{\sigma(q)}(p)}{2r }
 +\int_{\mathcal{V}_p}(\hat \nabla^p \Gamma_0 \cdot u,\D^p \tilde V_q)\mu_{\Gamma_0}(p)
\end{gather*}
\end{proposition}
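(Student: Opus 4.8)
\proof: The strategy is to repeat, in the higher-valence setting, the computation carried out in the proof of Theorem \ref{repweyl}, projecting every term onto the sub-bundle $\ds_{A\dots F}$ of $\E$. Starting from the representation of Proposition \ref{formuleintcomplete}, I would first insert the explicit form of the singular part of the fundamental solution, $\tilde U=\kappa\,\tau$, together with the facts that along $\mathcal{C}^+(p_0)$ one has $\hat\nabla^p\Gamma_0=2r_0\,l(p)$ and that, by Remark \ref{cliffcontr}, Clifford multiplication by $l$ amounts (up to the factor $i\sqrt{2}$) to contraction with $n^{AA'}=\iota^A\overline{\iota}^{A'}$. Acting with $n$ on $u=\phi_{AB\dots F}\,\eps^{\textbf{A}}_A\dots\eps^{\textbf{F}}_F+\psi^{A'}_{\phantom{A'}G\dots I}\,\eps^{A'}_{\textbf{A}'}\dots$ contracts only the distinguished spinor index, so that $n\cdot u$ involves only the components $\phi_{0\textbf{B}\dots\textbf{F}}$ and $\psi^{1'}_{\phantom{1'}\textbf{G}\dots\textbf{I}}$; pairing with $\tau_p(q)$ and using $(\tau_p(p),\phi)=\phi$ then yields an integrand built from $\kappa$, $r$, the basis spinors $\eps^{A'}_0(q),\eps^{\textbf{B}}_B(q),\dots$ and the relevant components of $u$ at $p$ — exactly as in the spin-$\tfrac12$ computation, but with the spectator indices $\textbf{B}\dots\textbf{F}$ carried along unchanged.

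Next I would apply $\D^q$ to the resulting integral over $\sigma(q)$ using Proposition \ref{diracderivation1}, and to the integral over $\mathcal{D}(q)$ using Proposition \ref{diracderivation2}. The mean-curvature term of Lemma \ref{meancurvature} produces the factor $-2\rho$, while the $\hat\nabla^q r_0\cdot\hat\nabla^p_l$ term produces the $\nabla^p_l$ derivative occurring in the first displayed integral. Expanding the Clifford products on $\ds_{A\dots F}$ (again via Remark \ref{cliffcontr}) and collecting terms, the transport equation satisfied by $\kappa$ and the identity $\nabla^i\Gamma_0\,\nabla_i\tau=0$ force all the spurious contributions to cancel, leaving precisely the two singular-part integrals displayed in the statement; projecting onto $\ds_{A\dots F}$ discards the $\psi$-contributions, and the two $\tilde V$-integrals coming from the regular part of the fundamental solution (whose only needed property is the structure recorded in Proposition \ref{higherspinfundsol} and equation \eqref{identite2}) are simply transcribed. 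Finally, since every quantity appearing in the remaining integrands is either $u$ or its derivative tangential to $\mathcal{C}^+(p_0)$ — the derivative $\nabla^p_l$ being along the generators of the cone — one may replace $u|_{\mathcal{C}^+(p_0)}$ by the Goursat datum $\theta$, hence $\phi_{0\textbf{B}\dots\textbf{F}}$ by $\theta_{0\textbf{B}\dots\textbf{F}}$.

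The one point that requires genuine care, and which I expect to be the main obstacle, is the bookkeeping of the valence: one must verify that Clifford multiplication by $l$ and the operator $\D^q$ on $\E$ act only on the distinguished index, so that the spectator factors $\eps^{\textbf{B}}_B(q)\dots\eps^{\textbf{F}}_F(q)$ genuinely pass through the differential operations and are merely parallel-transported, and that the symmetrization $u_{AB\dots F}=u_{(AB\dots F)}$ is compatible with the projection onto $\ds_{A\dots F}$. Once these compatibilities are settled the computation is formally identical, term by term, to that of Theorem \ref{repweyl}.\fin
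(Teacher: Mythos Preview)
Your approach is essentially the paper's: start from Proposition~\ref{formuleintcomplete}, compute $\hat\nabla^p\Gamma_0\cdot u$ explicitly, use $\tau_p(q)$ to shift the free indices from $p$ to $q$, and then invoke Propositions~\ref{diracderivation1} and~\ref{diracderivation2} to push $\D^q$ under the integral signs. That skeleton is correct, and the paper carries it out by direct expansion with no further ingredients.

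Two small corrections. First, your expectation that the spectator factors $\eps^{\textbf{B}}_B(q)\dots\eps^{\textbf{F}}_F(q)$ ``pass through'' $\D^q$ is not quite right: in the second integral of the statement the operator $\nabla^q_{AA'}$ genuinely acts on the full product $\left(k_p(q)/r\right)\eps^{A'}_0(q)\eps^{\textbf{B}}_B(q)\dots\eps^{\textbf{F}}_F(q)$, and this is exactly where the higher-spin formula differs from the spin-$\tfrac12$ one --- there is no cancellation to check, the spectator spinors simply remain inside the derivative. Second, you need neither the transport equation for $\kappa$ nor the parallel-transport equation for $\tau$ to eliminate ``spurious contributions'': the paper obtains the two displayed singular-part integrals by a straightforward Leibniz expansion of $\D^q$ applied to $\left(k_p(q)/r\right)\phi_{0\textbf{B}\dots\textbf{F}}(p)\eps^{A'}_0(q)\dots$, grouping the $\nabla^p_l$ and $-2\rho$ terms together and leaving everything $q$-dependent inside $\nabla^q_{AA'}$. (Also, no symmetry assumption on $u_{AB\dots F}$ is made here; that enters only in the next theorem.)
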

\begin{remark}
Since our interest is in the singular part of the integral representation of the solution, we do not give a more explicit expression of the smooth part of the integral formula.
\end{remark}
\proof
The first step is to calculate the contraction $\hat \nabla^p \Gamma_0\cdot u$:
\begin{eqnarray*}
\hat \nabla^p \Gamma_0\cdot u&=&-2i\sqrt{2}r_0\eps^A_0\overline\eps^{A'}_0(\phi_{\textbf{A}\dots \textbf{F}}\eps^{\text{a}}_A\dots \eps^{\textbf{F}}_F)+2i\sqrt{2}r_0 \eps_A^0\overline\eps_{A'}^0 (\psi^{\textbf{A}'}_{\phantom{a'} \textbf{B}\dots\textbf{F}}\eps^{A'}_{\textbf{A}'}\eps^{\textbf{B}}_{B}\dots\eps^{\textbf{F}}_{F})\\
&=&2i\sqrt{2}r_0(-\eps^A_0\overline\eps^{A'}_0\phi_{\textbf{A}\dots \textbf{F}}\eps^{a}_A+\eps_A^1\overline\eps_{A'}^1 \psi^{\textbf{A}'}_{\phantom{a'} \textbf{B}\dots\textbf{F}}\eps^{A'}_{\textbf{A}'})\eps^{\textbf{B}}_B\dots \eps^{\textbf{F}}_F\\
&=&2i\sqrt{2}r_0(\phi_{0\textbf{B}\dots \textbf{F}}\eps^{A'}_0-\psi^{1'}_{\phantom{a'} \textbf{B}\dots\textbf{F}}\eps_A^1)\eps^{\textbf{B}}_B\dots \eps^{\textbf{F}}_F.
\end{eqnarray*}
Since $\tau_{p}(q)$ is obtained by doing a tensor product between an element of the spin basis at a point $p$ with the spinor obtained by parallely transporting this spinor along the geodesic from $p$ to $q$, which is an element of the spin basis at $q$,  the symplectic product $(\tau_p(q),\hat \nabla^p \Gamma_0\cdot u)$ realizes a switch between the variables $p$ and $q$:
\begin{equation*}
(\tau_p(q),\hat \nabla^p \Gamma_0\cdot u)=i\sqrt{2}r_0\big(\phi_{0\textbf{B}\dots \textbf{F}}(p)\eps^{A'}_0(q)-\psi^{1'}_{\phantom{a'} \textbf{B}\dots\textbf{F}}(p)\eps_A^1(q)\big)\eps^{\textbf{B}}_B(q)\dots \eps^{\textbf{F}}_F(q).
\end{equation*}
The interversion of the symbols $\int$ and $\D$ gives (we only make the calculation on $\ds_{A\dots F}$):
\begin{gather}
i\sqrt{2}\nabla_{AA'}^q\left(\int_{\sigma(q)}2i\sqrt{2}r_0\phi_{0\textbf{B}\dots \textbf{F}}(p)\eps^{A'}_0(q)\eps^{\textbf{B}}_B(q)\dots \eps^{\textbf{F}}_F(q)\mu_{\Gamma_0,\Gamma_q}\right)\\=-\int_{\sigma(q)}\phi_{0\textbf{B}\dots \textbf{F}}(p)\nabla_{AA'}^q\left(\frac{k_q(p)}{r}\eps^{A'}_0(q)\eps^{\textbf{B}}_B(q)\dots \eps^{\textbf{F}}_F(q)\right)\mu_{\sigma(q)} \\
-\int_{\sigma(q)}\frac{k_q(p)}{r}(\nabla^p_l\phi_{0\textbf{B}\dots \textbf{F}}(p))\big(\nabla^q_{AA'}r_0\big)\eps^{A'}_0(q)\eps^{\textbf{B}}_B(q)\dots\eps^{\textbf{F}}_F(q)\mu_{\sigma(q)}\\
-\int_{\sigma(q)}\nabla^p_l\left(\frac{k_q(p)}{r}\right)(\phi_{0\textbf{B}\dots \textbf{F}}(p))\big(\nabla^q_{AA'}r_0\big)\eps^{A'}_0(q)\eps^{\textbf{B}}_B(q)\dots\eps^{\textbf{F}}_F(q)\mu_{\sigma(q)}\\
+\int_{\sigma(q)}2\rho\frac{k_q(p)}{r}(\phi_{0\textbf{B}\dots \textbf{F}}(p))\big(\nabla^q_{AA'}r_0\big)\eps^{A'}_0(q)\eps^{\textbf{B}}_B(q)\dots\eps^{\textbf{F}}_F(q)\mu_{\sigma(q)}
\end{gather}
which can be simplified in:
\begin{gather}
i\sqrt{2}\nabla_{AA'}^q\left(\int_{\sigma(q)}2i\sqrt{2}r_0\phi_{0\textbf{B}\dots \textbf{F}}(p)\eps^{A'}_0(q)\eps^{\textbf{B}}_B(q)\dots \eps^{\textbf{F}}_F(q)\mu_{\Gamma_0,\Gamma_q}\right)\label{integral225}\\
=-\int_{\sigma(q)}\left(\frac{k_q(p)}{r}\right)\big(\nabla^p_l\phi_{0\textbf{B}\dots \textbf{F}}(p)-2\rho\phi_{0\textbf{B}\dots \textbf{F}}(p)\big)\big(\nabla^q_{AA'}r_0\big)\eps^{A'}_0(q)\eps^{\textbf{B}}_B(q)\dots\eps^{\textbf{F}}_F(q)\mu_{\sigma(q)}\label{rest}\\
-\int_{\sigma(q)}\phi_{0\textbf{B}\dots \textbf{F}}(p)\nabla_{AA'}\left(\left(\frac{k_{q}(p)}{r}\right)\eps^{A'}_0(q)\eps^{\textbf{B}}_B(q)\dots\eps^{\textbf{F}}_F(q)\right)\mu_{\sigma(q)}\label{sumeps}
\end{gather}
The next part of the integral formula is exactly the same as in the case of the Weyl-Dirac spinors and is obtained in a similar way.\fin

Finally, to obtain a solution of the full problem with symmetry, it is sufficient to symmetrize the unprimed indices in the formula; we then give a representation, when the problem \eqref{ccphs} makes sense (i.e with adequat restrictions on the curvature for spin greater than $\frac32$):
\begin{theorem}\label{partiesingccphscourbe}
Let $u_{A\dots F}$ be a solution of the symmetrized characterictic Cauchy problem
\begin{equation}\label{ccphs5}
\left\{
\begin{array}{lcl}
\nabla^{AA'}u_{AB\dots F}&=&0 \text{ on } \mathcal{J}^+(p_0)\\
u_{0}&=&\theta_{AB\dots F} \text{ on } \mathcal{C}^+(p_0)
\end{array}
\right.,
\end{equation}
where $u_{AB\dots F}$ satisfies the symmetry conditions:
$
u_{AB\dots F}=u_{(AB\dots F)}.\\
$
Then the singular part of the integral representation of $u_{A\dots F}$, that is to say the part supported on the intersection of the cone is given by the formula:
\begin{gather*}
\int_{\sigma(q)}\left(\frac{k_q(p)}{r}\right)\big(\nabla^p_l\phi_{0\textbf{B}\dots \textbf{F}}(p)-2\rho\phi_{0\textbf{B}\dots \textbf{F}}(p)\big)\big(\nabla^q_{AA'}r_0\big)\eps^{A'}_0(q)\eps^{\textbf{B}}_B(q)\dots\eps^{\textbf{F}}_{F}(q)\mu_{\sigma(q)}\\
\int_{\sigma(q)}\phi_{0\textbf{B}\dots \textbf{F}}(p)\nabla_{AA'}\left(\left(\frac{k_{p}(q)}{r}\right)\eps^{A'}_0(q)\eps^{\textbf{B}}_B(q)\dots\eps^{\textbf{F}}_{F}(q)\right)\mu_{\sigma(q)}
\end{gather*}
\end{theorem}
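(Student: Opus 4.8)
\proof
The plan is to read off the formula from the representation already established, for the non-symmetric problem \eqref{ccphs6}, in the proposition immediately preceding this theorem (which itself rests on Proposition \ref{formuleintcomplete} and the projection onto $\ds_{A\dots F}$); the only new work is to record the effect of symmetrizing the unprimed indices and to observe that the quantities entering the singular part are data of the Goursat problem \eqref{ccphs5}.

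First I would remark that a totally symmetric solution $u_{AB\dots F}=u_{(AB\dots F)}$ of \eqref{ccphs5} is in particular a solution of \eqref{ccphs6} with $\theta_{AB\dots F}$ equal to the trace of $u_{AB\dots F}$ on $\mathcal{C}^+(p_0)$, so the representation of the preceding proposition applies verbatim. Its part carried by the singular factor $\tilde U_p(q)\delta(\Gamma_p(q))$ of the fundamental solution --- obtained, after the interchange of $\int$ with $\D^q$ via Propositions \ref{diracderivation1} and \ref{diracderivation2}, as the integrals \eqref{rest} and \eqref{sumeps}, the remaining $\tilde V$-governed terms making up the regular part --- is exactly the pair of integrals in the statement, with free indices $A,B,\dots,F$. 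Since the left-hand side $u_{A\dots F}$ is symmetric, this pair equals its own symmetrization over $A,B,\dots,F$; symmetrizing is harmless term by term because in each integrand the free index $A$ sits on $\nabla^q_{AA'}r_0$ (resp.\ on $\nabla_{AA'}$), while $B,\dots,F$ sit on the parallel-transported basis spinors $\eps^{\textbf{B}}_B(q),\dots,\eps^{\textbf{F}}_F(q)$, on which $\nabla^q_{AA'}$ acts without mixing indices, so the interchange of $\int$ and $\D^q$ commutes with $u\mapsto u_{(A\dots F)}$.

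Next I would check that the quantities $\phi_{0\textbf{B}\dots\textbf{F}}(p)$ and $\nabla^p_l\phi_{0\textbf{B}\dots\textbf{F}}(p)$, evaluated on $\sigma(q)\subset\mathcal{C}^+(p_0)$, are indeed data of \eqref{ccphs5}: contracting the first index with $o^A=\eps^A_0$ and differentiating with $\nabla^p_l=\mathfrak{p}$ along a generator of $\mathcal{C}^+(p_0)$ are tangential operations on the cone, so these are functions of the trace of $u$ on $\mathcal{C}^+(p_0)$ alone; moreover the weighted scalars $\phi_j$ with $j\le n-1$ occurring here are recovered from $\phi_0=\theta_{0\dots0}$ through the recursion of Lemma \ref{scalarrelation}, in which $\kappa=0$ because $l$ is an affinely parametrized null geodesic generator. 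This identifies the displayed expression as the announced singular part.

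The main obstacle is organisational rather than conceptual: one has to verify that symmetrization commutes with the interchange of the integral and $\D^q$ --- which is true precisely because $\nabla^q_{AA'}$ carries only the free index $A$ and the parallel transport built into $\eps^{\textbf{B}}_B(q),\dots$ does not depend on $A$ --- and one has to invoke, for spin larger than $\frac32$, the existence of the symmetric solution used above, which is exactly the consistency condition on the conformal curvature referred to in the statement (``when \eqref{ccphs} makes sense''). The well-definedness of the various duality brackets is handled as in the proofs of Lemma \ref{lemme551} and Theorem \ref{repweyl}.
\fin
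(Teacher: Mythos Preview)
Your proposal is correct and follows the same route as the paper: the paper's entire argument for this theorem is the one-line remark preceding it --- ``to obtain a solution of the full problem with symmetry, it is sufficient to symmetrize the unprimed indices in the formula'' --- applied to the representation of the preceding proposition. Your write-up simply unpacks this, adding the justification that symmetrization commutes with the $\int$/$\D^q$ interchange and that the quantities on $\sigma(q)$ are tangential data recoverable from $\phi_0$ via Lemma~\ref{scalarrelation}; these are reasonable clarifications of what the paper leaves implicit.
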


\subsection{Integral formula for spin $\frac{n}{2}$ in the flat case}

This subsection is devoted to the recovery of the Penrose formula; with the same notations as before, the following proposition holds:
\begin{proposition}[Integral formula for the flat case for spin $\frac{n}{2}$]
Let $\phi_{A\dots F}$ be a solution of \eqref{ccphs5} on the Minkowski space time.\\
Then $\phi$ can be written:
\begin{equation*}
\phi_{A\dots F}=(-1)^{n}\int_{\sigma(q)}(\nabla_l \phi_0-(n+1)\rho \phi_0)\iota_{A}\dots\iota_F\frac{\mu_{\sigma(q)}}{2\pi r}
\end{equation*}
\end{proposition}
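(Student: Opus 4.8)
The plan is to take the curved-space singular formula of Theorem~\ref{partiesingccphscourbe} and push it to the Minkowski space $(\R^{4},\eta)$, where two extra simplifications are available. First, in $(\R^{4},\eta)$ the scalar and spinorial wave operators are tail-free: since $\text{Scal}=0$ and the curvature spinor $X_{ABCD}$ vanishes, $\D^{2}=\square$ on $\mathbb{E}$ and the regular part $\tilde V$ of the fundamental solution is identically zero; hence the full representation of Proposition~\ref{formuleintcomplete} collapses to its singular part, which for the symmetrised problem is exactly the expression written in Theorem~\ref{partiesingccphscourbe}. Second, as recalled in the remark preceding the flat spin-$\tfrac12$ theorem, one has $k_{p}(q)=\tfrac1{2\pi}$, $\nabla^{q}r=l^{a}$, $\nabla^{q}r_{0}=n^{a}$, the spin basis $(o^{A},\iota^{A})$ is parallelly propagated along the generators so that $\epsilon=0$, and on the round cross-sections $\sigma(q)$ of the Minkowski cone the only surviving spin coefficients are $\rho$ and $\rho'$ (in particular $\kappa=\sigma=\sigma'=\tau'=0$). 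Substituting these turns $\nabla^{q}_{AA'}r_{0}$ into $\iota_{A}\overline\iota_{A'}$, whose primed index contracted with $\eps^{A'}_{0}=\overline o^{A'}$ gives a constant by the normalisation $o_{A}\iota^{A}=1$.

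Next I would reassemble the tetrad indices. Summing the completeness relation $\sum_{\textbf B,\dots,\textbf F}\phi_{0\textbf B\dots\textbf F}\,\eps^{\textbf B}_{B}\cdots\eps^{\textbf F}_{F}=o^{G}\phi_{GB\dots F}$ converts the first integral of Theorem~\ref{partiesingccphscourbe} into
\[
-\frac1{2\pi}\int_{\sigma(q)}\Bigl(\nabla^{p}_{l}\bigl(o^{G}\phi_{GB\dots F}\bigr)-2\rho\,o^{G}\phi_{GB\dots F}\Bigr)\,\iota_{A}\,\frac{\mu_{\sigma(q)}}{r},
\]
and similarly the second integral becomes a multiple of $\int_{\sigma(q)}o^{G}\phi_{GB\dots F}\,\nabla_{AA'}\bigl(r^{-1}\overline o^{A'}\bigr)\,\mu_{\sigma(q)}$. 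In flat space the spinorial derivatives $\nabla_{AA'}(r^{-1}\overline o^{A'})$, $\nabla_{l}o^{A}$ and the angular derivatives of the basis spinors are computed explicitly from $\nabla^{q}r=l^{a}$ and the parallelism along the generators; they produce, on the one hand, $\rho$-terms (the radial derivative of $r^{-1}$ being tied to $\rho$ through $\square\Gamma_{0}$) and, on the other hand, operators of $\eth'$-type acting on the weighted scalars $\phi_{j}$. At this point I invoke the compatibility equations: Lemma~\ref{scalarrelation}, with $\kappa=\sigma'=\tau'=0$, reads $\mathfrak{p}\phi_{j}-\eth'\phi_{j-1}=(n-j-1)\rho\,\phi_{j}$, which lets me trade every $\mathfrak{p}$-derivative of a component $\phi_{j}$ with $j\ge1$ for an $\eth'$-derivative of $\phi_{j-1}$ plus a $\rho$-multiple of $\phi_{j}$.

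The final step is the symmetrisation over the free indices $A,\dots,F$ together with the topological cancellation. After symmetrising, every integrand of the form $\int_{\sigma(q)}\eth'(\alpha)\,\mu_{\sigma(q)}$ with $\alpha$ a $(1,-1)$-weighted scalar vanishes by Proposition~\ref{formediff}; iterating the constraint relation and discarding such terms at each stage, all contributions except one cancel, and what remains is the ``all-$\iota$'' term $\phi_{0}\,\iota_{A}\iota_{B}\cdots\iota_{F}$, multiplied by $\mathfrak{p}\phi_{0}=\nabla_{l}\phi_{0}$ (recall $\epsilon=0$) and by a numerical multiple of $\rho$. The coefficient of $\rho$ should be assembled from $-2\rho$ (the explicit term in Theorem~\ref{partiesingccphscourbe}) together with one further $-\rho$ for each of the $n-1$ lower indices that get symmetrised into $\iota$'s — produced by the constraint relation and by the derivative of the corresponding basis spinor — giving $-(n+1)\rho$ in total; and the $n-1$ sign factors $\eps^{0}_{B}=-\iota_{B}$ together with the sign of the $\overline\iota_{A'}\overline o^{A'}$ contraction yield the overall $(-1)^{n}$. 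Collecting the pieces,
\[
\phi_{A\dots F}=(-1)^{n}\int_{\sigma(q)}\bigl(\nabla_{l}\phi_{0}-(n+1)\rho\,\phi_{0}\bigr)\,\iota_{A}\cdots\iota_{F}\,\frac{\mu_{\sigma(q)}}{2\pi r},
\]
which is the asserted identity; for $n=1$ no compatibility equation intervenes, the coefficient is $-2\rho$, and one recovers directly the flat spin-$\tfrac12$ formula of the preceding theorem. I expect the genuine difficulty to be exactly this last step: checking that, term by term in the symmetrisation, every contribution other than the $\nabla_{l}\phi_{0}$ one is an exact $\eth'$-derivative of a $(1,-1)$-weighted scalar on $\sigma(q)\cong\mathbb{S}^{2}$ — so that Proposition~\ref{formediff} removes it — and that the $\rho$-terms really add up, with coefficient one each, to $-(n+1)\rho$ rather than to $-2\rho$ or $-n\rho$.
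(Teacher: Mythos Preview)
Your overall strategy is the paper's: drop the tail via $\tilde V\equiv0$ in Minkowski space, insert the flat values ($k=\tfrac1{2\pi}$, $\nabla^q r_0=n^a=\iota^A\overline\iota^{A'}$, vanishing spin coefficients), and then use Lemma~\ref{scalarrelation} together with Proposition~\ref{formediff} to collapse everything onto $\phi_0$.

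Two places where your execution drifts from what actually happens. First, the second integral in Theorem~\ref{partiesingccphscourbe} does \emph{not} ``produce $\rho$-terms and $\eth'$-type operators''; in flat space it vanishes outright. Every summand of $\nabla^q_{AA'}\bigl(r^{-1}\overline o^{A'}\eps^{\textbf B}_B\cdots\eps^{\textbf F}_F\bigr)$ acquires a factor $\overline o_{A'}\overline o^{A'}=0$ once you feed in $\nabla_{AA'}r=o_A\overline o_{A'}$, $\nabla_{AA'}o_B=0$ and $\nabla_{AA'}\iota_B=-r^{-1}\iota_A\overline o_{A'}o_B$ from the flat-space remarks. The paper states this (``the only remaining term in the flat case is \eqref{rest} since the term \eqref{sumeps} vanishes'') and works exclusively with the first integral. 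Your writing of that integral as $o^{G}\phi_{GB\dots F}\,\nabla_{AA'}(r^{-1}\overline o^{A'})$ also silently pulls the basis spinors $\eps^{\textbf B}_B(q),\dots$ outside the $q$-derivative, which is not legitimate a priori --- it only becomes harmless here because the term is zero anyway.

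Second, the step you correctly flag as the difficulty is where the paper does the real work, and the mechanism is not ``one extra $-\rho$ per remaining index''. The paper expands the first integral as the binomial sum
\[
\sum_{j=0}^{n-1}(-1)^{n-j-1}\binom{n-1}{j}\int_{\sigma(q)}\bigl(\nabla_l\phi_j-2\rho\phi_j\bigr)\,\underbrace{\iota_{(A}\cdots\iota_C}_{n-j}\underbrace{o_D\cdots o_{F)}}_{j}\,\frac{\mu_{\sigma(q)}}{2\pi r},
\]
applies the flat constraint $\mathfrak p\phi_j-2\rho\phi_j=\eth'\phi_{j-1}+(n-j-1)\rho\phi_j$ to each $j\ge1$, kills the $\eth'$ piece with Proposition~\ref{formediff} (using $\eth'\iota_A=\eth' o_A=\eth' r=0$), and is left at level $j$ with an $(n-j-1)\rho\phi_j$ term and a $j\rho\phi_{j-1}$ term one step down. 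After reindexing, the intermediate contributions cancel pairwise because of the elementary identity $(n-j-1)\binom{n-1}{j}=(j+1)\binom{n-1}{j+1}$; what survives is the $j=0$ piece $(\nabla_l\phi_0-2\rho\phi_0)\iota_A\cdots\iota_F$ together with a single extra $-(n-1)\rho\phi_0\iota_A\cdots\iota_F$ from the bottom of the telescope, whence the coefficient $-(n+1)$. Your heuristic lands on the right number, but without this binomial telescoping you have not actually exhibited the cancellation you say you expect.
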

\begin{remark} The formula which is given here agrees with the one obtained by Penrose in \cite{p80} (formula 4.9). The $(-1)^n$ comes from the fact that Penrose chooses the convention: 
$$
\iota^{A}\longmapsto-\iota^{A}
$$
because of the different choice of normalization (formula (4.7), \emph{op. cit.}):
$$
\iota_{A}o^{A}=1
$$
whereas our convention is:
$$
o_{A}\iota^{A}=1.
$$
\end{remark}

\proof: We summarize the geometric elements required to perform the calculation: 
\begin{remark}\label{constmink}
We recall the main properties of the spinor basis which was constructed in section \ref{geometricdataonthecone}:
\begin{enumerate}
\item the spinors $o^A$ and $\iota^A$ are constant along a generator of the cone $\mathcal{J}^+(p_0)$, so that
the spin coefficients corresponding to the derivatives  of $o^A,\iota^A$ along the vector $l^a=o^A\overline{o}^A$, $\kappa, \eps, \tau'$ are zero;
\item furthermore, for $q$ in $\mathcal{J}^+(p_0)$, the basis $(o^A, \iota^A)$ is parallely transported along the integral curves of $\iota^A\overline{\iota}^{A'}$ and so, in the flat case, is constant along the null generators of the cone $\mathcal{J}^-(q)$; 
\item the derivatives along $m$  of $o^A$ and $\iota^A$ are calculated explicitly (see \cite{Penrose:1986fk}, 4.12.28):
$$
\eth' o^A=-\rho \iota^A \text{ and } \eth' \iota^A=-\sigma' o^A ;
$$
\item the derivatives of $\iota^A$ and $r$ can be explicitly calculated by differentiating the relation:
$$
\vec{p_0p}^a=r_0l^a+r\iota^A\overline{\iota}^{A'}
$$
for any $p$ in $\mathcal{J}^+(p_0)$. Their derivatives are: 
\begin{equation}\label{rderivative}
\nabla_{BB'}\iota^A=-\frac{1}{r}\iota_B\overline{o}_{B'}o^A \text{ and } \nabla_{AA'}r=o_A\overline{o}_{A'}
\end{equation}
and, consequently, the only non-vanishing derivative of $\iota^A$ is 
$$
\nabla_m \iota^A=\frac{1}{r}o^A
$$
and the spin coefficients 
$$
\tau'=-\iota^A\nabla_l\iota_A, \sigma'=-\iota^A\nabla_{\overline{m}}\iota_A, \beta'=-\alpha=-\iota^A\nabla_{\overline{m}}\iota_A \text{ and } \beta=-\alpha'=-\iota^A\nabla_{m}\iota_A
$$
vanish.
\item Using equations \eqref{rderivative} and since $\iota^A$ is a $(-1,0)$-spinor and $r$ is a $(1,1)$ scalar, the following derivatives vanish:
$$
\eth'\iota^A=0 \text{ and } \eth'r=0.
$$
\end{enumerate} 
\end{remark}

For the sake of clarity, the calculation is first performed for the Maxwell equations and then for the arbitrary spin. The first step is to write the Maxwell equations
\begin{equation*}
\nabla^{AA'}\phi_{AB}=0
\end{equation*}
as
\begin{equation}\label{npmaxwell}
\begin{array}{rcl}
\nabla_l\phi_1-\nabla_{\overline{m}}\phi_0&=&(\pi-2\alpha)\phi_0+2\rho \phi_1-\kappa \phi_2\\
\nabla_l\phi_2-\nabla_{\overline{m}}\phi_1&=&-\lambda \phi_0+2\pi \phi_1+(\rho-2\eps)\phi_2\\
\nabla_m\phi_1-\nabla_n \phi_0&=&(\mu-2\gamma)\phi_0+2\tau \phi_1-\sigma \phi_2\\
\nabla_m\phi_2-\nabla_n \phi_1&=&-\nu\phi_0+2\mu\phi_1+(\tau-2\beta)\phi_2
\end{array}
\end{equation}
with the convention $\phi_{00}=\phi_0$, $\phi_{10}=\phi_1$ and $\phi_{11}=\phi_2$.  We then consider the singular part:

\begin{gather*}
\int_{\sigma(q)}\left(\frac{k_q(p)}{r}\right)\big(\nabla^p_l\phi_{0\textbf{b}}(p)-2\rho\phi_{0\textbf{b}}(p)\big)\big(\nabla^q_{AA'}r_0\big)\eps^{A'}_0\eps^{\textbf{b}}_B\mu_{\sigma(q)}\\
=\int_{\sigma(q)}\left(\frac{k_q(p)}{r}\right)\big(\nabla^p_l\phi_{0\textbf{b}}(p)-2\rho\phi_{0\textbf{b}}(p)\big)\big(\iota_A\iota_{A'}\big)\eps^{A'}_0\eps^{\textbf{b}}_B\mu_{\sigma(q)}\\
=-\int_{\sigma(q)}\left(\frac{k_q(p)}{r}\right)\big(\nabla^p_l\phi_{0\textbf{b}}(p)-2\rho\phi_{0\textbf{b}}(p)\big)\iota_A\eps^{\textbf{b}}_B\mu_{\sigma(q)}\\
=-\int_{\sigma(q)}\left(\frac{k_q(p)}{r}\right)\big(\nabla^p_l\phi_{00}(p)-2\rho\phi_{00}(p)\big)\iota_A\iota_B\mu_{\sigma(q)}\\-\underbrace{\int_{\sigma(q)}\left(\frac{k_q(p)}{r}\right)\big(\nabla^p_l\phi_{01}(p)-2\rho\phi_{01}(p)\big)\iota_Ao_B\mu_{\sigma(q)}}_{=B}
\end{gather*}
with $\overline{\kappa}=\frac{1}{2\pi r}$. Using the first Maxwell equation \eqref{npmaxwell}, and since, for the choice of basis which was previously done, the spin coefficients $\kappa=o^A\nabla_l o_A$ and $\pi=-\iota^A\nabla_{l}\iota_A$ vanish, we obtain:
\begin{gather*}
B=\int_{\sigma(q)}\left(\frac{k_q(p)}{r}\right)\big(\nabla^p_l\phi_{01}(p)-2\rho\phi_{01}(p)\big)\iota_Ao_B\mu_{\sigma(q)}=
\int_{\sigma(q)}\frac{1}{2 \pi r}\big( \nabla^p_{\overline{m}}\phi_{00}-2\alpha\phi_{00}\big)\iota_Ao_B\mu_{\sigma(q)}\\=
\int_{\sigma(q)} \nabla_{\overline{m}}\left(\frac{\phi_{00}\iota_Ao_B}{2\pi r}\right)-2\alpha\frac{\phi_{00}\iota_Ao_B}{2\pi r}+\nabla_{\overline{m}}r\frac{\phi_{00}\iota_Ao_B}{2\pi r^2}-\frac{\phi_{00}\iota_A\nabla_{\overline{m}}(o_B)}{2\pi r}-\frac{\phi_{00}\nabla_{\overline{m}}(\iota_A)o_B}{2\pi r}\mu_{\sigma(q)}
\end{gather*}
Since 
\begin{equation*}
\nabla^p_{\overline{m}}r=\iota^B\overline{o}^{B'},\ \nabla^p_{BB'}r=\iota^B\overline{o}^{B'}o_{B}\overline{o}_{B'} \text{ and } \nabla^{p}_{\overline{m}}o_B=-\rho\iota_B,
\end{equation*}
and 
\begin{equation*}
\int_{\sigma(q)}\nabla^p_{\overline{m}}\left(\frac{\phi_{00}\iota_Ao_B}{2\pi r}\right)-2\alpha\frac{\phi_{00}\iota_Ao_B}{2\pi r}\mu_{\sigma(q)}=0,
\end{equation*}
by Stoke's theorem (cf. (4.14.70) in \cite{Penrose:1986fk}; it is possible to reinterpret this expression using the compacted spin coefficient formalism), we obtain:
\begin{equation*}
B=\int_{\sigma(q)}\rho\frac{\phi_{00}}{2 \pi r}\iota_A\iota_B\mu_{\sigma(q)}.
\end{equation*}

We finally have the expected integral formula for the Maxwell equation:
\begin{gather*}
\int_{\sigma(q)}\left(\frac{k_q(p)}{r}\right)\big(\nabla^p_l\phi_{0\textbf{b}}(p)-2\rho\phi_{0\textbf{b}}(p)\big)\big(\nabla^q_{AA'}r_0\big)\eps^{A'}_0(q)\eps^{\textbf{b}}_B(q)\mu_{\sigma(q)}\\=\int_{\sigma(q)}(\nabla_l\phi_{00}-2\rho)\iota_A\iota_B\frac{\mu_{\sigma(q)}}{2\pi r}-B\\
=\int_{\sigma(q)}(\nabla^p_l\phi_{00}-3\rho)\iota_A\iota_B\frac{\mu_{\sigma(q)}}{2\pi r}.
\end{gather*}

The first step of the general proof is to notice that, as in the Maxwell case, the only remaining term in the flat case is the equation \eqref{rest} since the term \eqref{sumeps} vanishes. So the simplification of the equation \eqref{rest} can be done using the same methods.

A direct consequence of remark \ref{constmink} is that the relation given in lemma \ref{scalarrelation} is considerably simplified:
\begin{equation}\label{simprelation}
\forall j\in \{1, n-1\}, \mathfrak{p}\phi_j-2\rho \phi_j=\eth'\phi_{j-1}+(n-j-1)\rho \phi_j.
\end{equation}

In the Minkowski case, the only non-vanishing term in the integral formula is the following:
\begin{equation*}
\int_{\sigma(q)}\left(\frac{k_q(p)}{r}\right)\big(\nabla^p_l\phi_{0\textbf{b}\dots \textbf{f}}(p)-2\rho\phi_{0\textbf{b}\dots \textbf{f}}(p)\big)\big(\nabla^q_{(AA'}r_0\big)\eps^{A'}_0(q)\eps^{\textbf{b}}_B(q)\dots\eps^{\textbf{f}}_{F)}(q)\mu_{\sigma(q)}
\end{equation*}
which can be simplified in a flat space as: 
\begin{equation*}
-\int_{\sigma(q)}\frac{1}{2\pi r}\big(\nabla^p_l\phi_{0\textbf{B}\dots \textbf{F}}(p)-2\rho\phi_{0\textbf{F}\dots \textbf{F}}(p)\big)\iota_{(A}(q)\eps^{\textbf{B}}_B(q)\dots\eps^{\textbf{F}}_{F)}(q)\mu_{\sigma(q)}\\
\end{equation*}

Consider the generic term in this sum: let $j$ be an integer in $\{1,\dots,n-1\}$:
$$
\int_{\sigma(q)}\frac{1}{2\pi r}(\nabla_l \phi_j-2\rho)\iota_{(A}(q)\eps^{\textbf{B}}_{B}(q)\dots\eps^{\textbf{F}}_{F)}(q)\mu_{\sigma(q)}
$$
Since $\phi_j$ is obtained by contracting j times $\phi_{A\dots F}$ with $\iota^A$ and $n-j$ times with $o^A$, it means that there is exactly j times $o_A$ and $n-j-1$ times $-\iota_A$ in the list $\eps^{\textbf{B}}_B(q)\dots\eps^{\textbf{F}}_{F}(q)$; since the sum is symmetric, it can be written:
\begin{gather*}
\int_{\sigma(q)}\frac{1}{2\pi r}(\nabla_l \phi_j-2\rho \phi_j)\underbrace{\iota_{(A}(-\iota_{B})\dots(-\iota_C)}_{n-j \text{ terms}} \underbrace{o_D\dots o_{F)}}_{j \text{ terms}} \mu_{\sigma(q)}.
\end{gather*}
Using equation \eqref{simprelation}, it becomes:
\begin{gather*}
\int_{\sigma(q)}\frac{1}{2\pi r}(\nabla_l \phi_j-2\rho\phi_j)\iota_{(A}\dots\iota_Co_D\dots o_{F)}\mu_{\sigma(q)}\\=\int_{\sigma(q)}\frac{1}{2\pi r}(n-j-1)\rho \phi_j\iota_{(A}\dots\iota_Co_D\dots o_{F)} \mu_{\sigma(q)}+\int_{\sigma(q)}\frac{1}{2\pi r}\eth'(\phi_{r-1})\iota_A\dots\iota_Co_D\dots o_{F)} \mu_{\sigma(q)}.
\end{gather*}
Using remarks \ref{constmink}, the last integral is written as a difference: 
\begin{gather}
\int_{\sigma(q)}\frac{1}{2\pi r}\eth'(\phi_{j-1})\iota_{(A}\dots\iota_Co_D\dots o_{F)} \mu_{\sigma(q)}=\int_{\sigma(q)}\eth'\left(\frac{\phi_{j-1}\iota_{(A}\dots\iota_Co_D\dots o_{F)}}{2\pi r}\right)\mu_{\sigma(q)}\label{3342}\\
+j\int_{\sigma(q)}\rho\phi_{j-1}\underbrace{\iota_{(A}\dots\iota_D}_{n-j-1}\underbrace{o_E\dots\iota_{F)}}_{j-1}\frac{\mu_{\sigma(q)}}{2\pi r}.
\end{gather}
It has already been noted that:
\begin{enumerate}
\item $r$ is $(1,1)$ scalar; 
\item $\phi_{j-1}$ is a $(n-2j+2,0)$ scalar;
\item $\iota_{(A}\dots\iota_Co_D\dots o_{F)}$ is a $(2j-n,0)$ spinor.
\end{enumerate}
As a consequence, the term integrated in the left-hand side of equation \eqref{3342} and under the derivation $\eth'$ is $(1,-1)$ spinor. In order to apply lemma \ref{formediff}, this spinor is contracted with $n$ constant arbitrary spinors; this gives:
$$
\int_{\sigma(q)}\eth'\left(\frac{\phi_{j-1}}{2\pi r}\underbrace{\iota_{(A}\dots\iota_C}_{n-j\text{ terms}} \underbrace{o_D\dots o_{F)}}_{j\text{ terms}}\right)\mu_{\sigma(q)}=0.
$$
Finally, we obtain:
\begin{gather}
\int_{\sigma(q)}(\nabla_l \phi_j-2\rho \phi_r)\underbrace{\iota_{(A}\dots\iota_C}_{n-j\text{ terms}} \underbrace{o_D\dots o_{F)}}_{j \text{ terms}}\frac{\mu_{\sigma(q)}}{2\pi r}=\\(n-j-1)\int_{\sigma(q)}\rho \phi_j\underbrace{\iota_{(A}\dots\iota_C}_{n-j}\underbrace{o_D\dots o_{F)} }_j\frac{\mu_{\sigma(q)}}{2\pi r}+j\int_{\sigma(q)}\rho\phi_{j-1}\underbrace{\iota_{(A}\dots\iota_D}_{n-j+1}\underbrace{o_E\dots\iota_{F)}}_{j-1}\frac{\mu_{\sigma(q)}}{2\pi r}
\end{gather}

Theses terms are added to obtain the complete expression of the integral formula:
\begin{gather*}
\int_{\sigma(q)}(\nabla_l \phi_{0\textbf{b\dots f}}-2\rho)\iota_{(A}(q)\eps^{\textbf{b}}_{B}(q)\dots\eps^{\textbf{f}}_{F)}(q)\frac{\mu_{\sigma(q)}}{2\pi r}\\
=\sum_{j=0}^{n-1}(-1)^{n-j-1}\binom{n-1}{j}\int_{\sigma(q)}(\nabla_l \phi_j-2\rho \phi_j)\underbrace{\iota_{(A}\dots\iota_C}_{n-j \text{ terms}} \underbrace{o_D\dots o_{F)}}_{j \text{ terms}}\frac{\mu_{\sigma(q)}}{2\pi r} \\
=\int_{\sigma(q)}(\nabla_l \phi_0-2\rho \phi_0)\iota_{A}\dots\iota_F\frac{\mu_{\sigma(q)}}{2\pi r}+\\
\sum_{j=1}^{n-1}(-1)^{n-j-1}(n-j-1)\binom{n-1}{j}\int_{\sigma(q)}\rho \phi_j\underbrace{\iota_{(A}\dots\iota_C}_{n-j}\underbrace{o_D\dots o_{F)} }_j\frac{\mu_{\sigma(q)}}{2\pi r}\\
+\sum_{j=1}^{n-1}(-1)^{n-j-1}j\binom{n-1}{j}\int_{\sigma(q)}\phi_{j-1}\underbrace{\iota_{(A}\dots\iota_D}_{n-j-1}\underbrace{o_E\dots\iota_{F)}}_{j-1}\frac{\mu_{\sigma(q)}}{2\pi r}.
\end{gather*} 
The sum is split in two and reindexed:
\begin{gather*}
\sum_{j=1}^{n-1}(-1)^{n-j-1}(n-j-1)\binom{n-1}{j}\int_{\sigma(q)}\rho \phi_j\underbrace{\iota_{(A}\dots\iota_C}_{n-j}\underbrace{o_D\dots o_{F)} }_j\frac{\mu_{\sigma(q)}}{2\pi r}\\
+\sum_{j=1}^{n-1}(-1)^{n-j-1}j\binom{n-1}{j}\int_{\sigma(q)}\phi_{j-1}\underbrace{\iota_{(A}\dots\iota_D}_{n-j-1}\underbrace{o_E\dots\iota_{F)}}_{j-1}\frac{\mu_{\sigma(q)}}{2\pi r}=\\
\sum_{j=1}^{n-2}(-1)^{n-j-1}\underbrace{\left((n-j-1)\binom{n-1}{j}-(j+1)\binom{n-1}{j+1}\right)}_{=0}\int_{\sigma(q)}\rho \phi_j\underbrace{\iota_{(A}\dots\iota_C}_{n-j}\underbrace{o_D\dots o_{F)} }_j\frac{\mu_{\sigma(q)}}{2\pi r}\
\end{gather*}
The remaining terms are then:
\begin{gather*}
(-1)^{n-1}\int_{\sigma(q)}(\nabla_l \phi_0-2\rho \phi_0)\iota_{A}\dots\iota_F\frac{\mu_{\sigma(q)}}{2\pi r}-(-1)^{n-1}\binom{n-1}{1}\int_{\sigma(q)}\rho \phi_0\iota_{A}\dots\iota_F\frac{\mu_{\sigma(q)}}{2\pi r}
\end{gather*}
and the integral formula is, because of the antisymmetry of the symplectic product:
\begin{equation}
\phi_{A\dots F}=(-1)^{n}\int_{\sigma(q)}(\nabla_l \phi_0-(n+1)\rho \phi_0)\iota_{A}\dots\iota_F\frac{\mu_{\sigma(q)}}{2\pi r}
\end{equation}
is proved.\fin

\newpage
\textbf{Concluding remarks}
\begin{enumerate}
\item Klainerman-Rodnianski state in \cite{MR2339803} that a $C^2$ metric (or a square-integrable Riemann curvature) is sufficient to write the singular part of the Kirchoff-Sobolev parametrix for the Einstein equations. It is expected that such a regularity will not prevent the use of this method for the arbitrary spin Dirac equation.
\item The construction of the representation formula is flexible enough to be used with other fiber bundles. Provided that the correct geometric hypotheses are stated for the manifold, such a representation can thus be obtained for the Rarita-Schwinger (or gravitino) equations.
\item Chrusciel-Shatah obtained in \cite{AJM-1-3-530-548} $L^2$-estimates for the Yang-Mills equations. We hope that such estimates can be obtained using the Friedlander construction of an integral formula. Nonetheless, it must be noted that they intensively used the gauge freedom which exists for the Yang-Mills equation: they used both the Cronström gauge (to obtain pointwise estimates) and the temporal gauge (to obtain estimates on spacelike slices). Similar estimates for the Dirac equations could help to explain, for instance, the loss of regularity observed in the characteristic Cauchy problem in \cite{ha06} (section 6).
\end{enumerate}

This work was partially supported by the ANR project JC0546063 ``Equations hyperboliques
dans les espaces-temps de la relativit\'e g\'en\'erale : Diffusion et r\'esonances.''
\bibliographystyle{siam}
\bibliography{scatt}

\end{document}